\newtheorem{theorem}{Theorem}[section]
\newtheorem{lemma}[theorem]{Lemma}
\newtheorem{conjecture}[theorem]{Conjecture}
\newtheorem{observation}{Observation}
\newtheorem{remark}{Remark}
\newtheorem{claim}[theorem]{Claim}
\newtheorem{corollary}[theorem]{Corollary}
\newtheorem{question}{Question}
\newtheorem{proposition}[theorem]{Proposition}
\newtheorem*{TheoNonNum}{Theorem}
\newtheorem*{propoNoNum}{Proposition}
\begin{document}
\title{Injective edge-coloring of sparse graphs}
\author[1,4]{Baya Ferdjallah}
\author[1,5]{Samia Kerdjoudj }

\author[2]{Andr\'e Raspaud}
\affil[1]{LIFORCE, Faculty of Mathematics, USTHB, BP 32 El-Alia, Bab-Ezzouar 16111, Algiers, Algeria}
\affil[2]{LaBRI, Universit\'e de Bordeaux, 351 cours de la Lib\'eration, 33405 Talence Cedex, France}
\affil[4]{Universit\'e de Boumerd\`es, Avenue de l'ind\'ependance, 35000, Boumerd\`es, Algeria}
\affil[5]{Universit\'e de Blida 1, Route de Soum\^aa BP 270, Blida, Algeria}

\maketitle

\begin{abstract}

An injective edge-coloring $c$ of a graph $G$ is an edge-coloring such that if $e_1$, $e_2$, and $e_3$ are three consecutive edges in $G$ (they are consecutive if they form a path  or a cycle of length three), then $e_1$ and $e_3$ receive different colors. The minimum  integer $k$ such that, $G$ has an injective edge-coloring with $k$ colors,  is called the injective chromatic index of $G$ ($\chi'_{\textrm{inj}}(G)$). This parameter was introduced by Cardoso et \textit{al.}  \cite{CCCD} motivated by the Packet Radio Network problem. They proved that computing $\chi'_{\textrm{inj}}(G)$ of a graph $G$ is NP-hard. 

We give new upper bounds for this parameter and we present the relationships of the injective edge-coloring with other colorings of graphs.
The obtained general bound gives 8 for the injective chromatic index of a  subcubic graph. If the graph is subcubic bipartite we improve this last bound. We prove that a subcubic bipartite graph has an injective chromatic index bounded by $6$. We also prove that if $G$ is a subcubic graph with maximum average degree less than $\frac{7}{3}
$ (resp. $\frac{8}{3}
$, $3$), then $G$ admits an injective edge-coloring with at most 4 (resp. $6$, $7$) colors. Moreover, we establish a tight upper bound for subcubic outerplanar graphs.

\end{abstract}

%

\section{Introduction}

All the graphs we consider are finite and simple. For a graph $G$, we  denote by $V(G)$, $E(G)$, $\delta(G)$, and $\Delta(G)$ its vertex set, edge set, minimum degree, and maximum degree, respectively. We denote by $d_G(v)$ the degree of a vertex $v$ in $G$. If $G$ is clear from the context, then we may omit the subscript. The girth of a graph $G$ is the length of a shortest cycle contained in the graph. If two edges $e$ and $f$  of a graph $G$ are the two extremities of a path of length $3$, we will say that $e$ and $f$ are at distance $2$.\\

A proper vertex (respectively, edge) coloring of $G$ is an assignment of colors to the vertices (respectively, edges) of $G$ such that no two adjacent vertices (respectively, edges) receive the same color. 

Let $G= (V, E)$ be a graph and  $c$ be a vertex
$k$-coloring of $G$. $c$ is  a function $c : V \longrightarrow  \{1, \ldots, k\}$.  We say that the vertex coloring of $G$ is {
injective} if its restriction to the neighborhood of any vertex
is injective. The { injective chromatic number} $\chi_{\rm {inj}}(G)$ of
a graph $G$ is the least $k$ such that there is an injective $k$-coloring. Injective coloring of graphs was introduced by Hahn
et. al in \cite{hss}. Note that an injective coloring is
not necessarily a proper coloring.\\
 Three edges $e_1$, $e_2$, and $e _3$ in a graph $G$ are consecutive if they form a path (in this order) or a cycle of length three. An injective edge-coloring of a graph $G = (V, E)$ is a coloring $c$ of the edges of $G$ such that if $e_1$, $e_2$, and $e_3$ are consecutive edges in $G$, then $c(e_1 ) \neq c(e_3 )$.
In other words, every two
edges at distance exactly $2$ or belonging to a triangle do not use the same color.
The injective chromatic index of $G$, denoted by $\chi'_{\rm {inj}}(G)$, is the minimum number of colors needed for an injective edge-coloring of $G$. Note that an injective edge-coloring is not necessarily a proper edge-coloring. This notion was introduced in 2015 
by Cardoso et \textit{al.} \cite{CCCD} motivated by a Packet Radio Network problem. Two adjacent entities in a network must use different frequencies to send  messages to their other neighbors. What is the minimum number of frequencies needed?\\
When we consider proper colorings, it is well known that the edge chromatic number of a graph $G$ 
 is equal to the vertex chromatic number of its line graph $L(G)$, $\chi'(G) = \chi(L(G))$. 
For the injective coloring it is not always true, that  $\chi'_{\rm {inj}}(G) = \chi_{\rm {inj}}(L(G))$.  For the star $K_{1,n}$ we have $\chi'_{\rm {inj}}(K_{1,n})=1$ and $\chi_{\rm {inj}}(L(K_{1,n})=n$.

In \cite{CCCD}, it is proved that computing $\chi'_{\rm {inj}}(G)$ of a graph $G$ is NP-hard and the authors  gave the injective chromatic index of some classes of graphs (paths, cycles, wheels,  Petersen graph and complete bipartite graphs). 
\begin{proposition}\cite{CCCD}\label{cyclechaine}
Let $P_n$ (resp. $C_n$ ) be a path (resp. a cycle) with $n$ vertices. Then 
\begin{enumerate}
\item $\chi'_{\rm {inj}}(P_n) = 2$, for $n\geq 4$.
\item $\chi'_{{\rm {inj}}}(C_n)=\begin{cases} 2& {\rm if }~ n \equiv 0 \pmod 4;\\ 3& {\rm otherwise}.\end{cases}$

\end{enumerate} 
\end{proposition}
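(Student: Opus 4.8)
The plan is to reformulate an injective edge-coloring of $P_n$ and $C_n$ as a proper vertex-coloring of an auxiliary ``distance-two'' graph on the edge set, and then to compute the chromatic number of that graph by a short parity analysis. Label the edges consecutively as $e_1, e_2, \ldots$ along the path (resp.\ cyclically around the cycle). Since paths and cycles are triangle-free except for $C_3$, the only relevant consecutive triples are sub-paths $e_{i-1}, e_i, e_{i+1}$, so the injective condition reduces exactly to the requirement that $c(e_i) \neq c(e_{i+2})$ for every admissible $i$; for $C_3$ the triangle clause of the definition yields the very same constraint. Thus an injective edge-coloring is precisely a proper coloring of the graph $H$ whose vertices are the $e_i$ and in which $e_i$ is joined to $e_{i+2}$, i.e.\ the graph recording which edges are at distance $2$.

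First I would dispose of the path $P_n$ with $n\ge 4$, so that there are at least three edges. The lower bound is immediate: $e_1$ and $e_3$ are at distance $2$, hence at least two colors are needed. For the upper bound I would exhibit the explicit periodic coloring $1,1,2,2,1,1,2,2,\ldots$, i.e.\ assign $e_i$ the color $1$ if $\lceil i/2\rceil$ is odd and $2$ otherwise; one checks directly that $c(e_i)\neq c(e_{i+2})$ for every $i$, giving $\chi'_{\mathrm{inj}}(P_n)=2$.

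For the cycle $C_n$ the key step is to determine the structure of $H$ according to the parity of $n$. When $n$ is odd, $\gcd(2,n)=1$, so stepping by $2$ visits every index and $H$ is a single cycle of length $n$; when $n$ is even, $H$ splits into two vertex-disjoint cycles of length $n/2$, namely the odd-indexed edges $e_1,e_3,\ldots$ and the even-indexed edges $e_2,e_4,\ldots$. I would then invoke the classical values $\chi(C_k)=2$ for $k$ even and $\chi(C_k)=3$ for $k$ odd. Consequently, for $n\equiv 0\pmod 4$ both halves are even cycles and $\chi(H)=2$; for $n\equiv 2\pmod 4$ each half is an odd cycle and $\chi(H)=3$; and for odd $n$ the single cycle $C_n$ is odd, so $\chi(H)=3$. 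This reproduces exactly the stated case distinction.

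The main obstacle—really the only delicate point—is the bookkeeping of $H$ under wrap-around: one must verify carefully that the step-two relation on an even cycle decomposes into two cycles of half the length, and one must reconcile the boundary case $C_3$, where the triangle clause of the definition must be seen to coincide with the distance-two reduction. To keep the upper bounds self-contained rather than merely citing $\chi(C_k)$, I would also record the explicit colorings: the pattern $1,1,2,2$ repeated around $C_n$ when $4\mid n$, and a standard proper $3$-coloring of the relevant odd cycle in the remaining cases.
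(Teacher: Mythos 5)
Your proof is correct, but note that the paper itself contains no proof of this proposition to compare against: it is imported verbatim from Cardoso et al.\ \cite{CCCD} as a cited result. So your argument stands as a self-contained replacement. Its key device---the auxiliary graph $H$ on $E(G)$ joining edges at distance two or lying in a common triangle, with $\chi'_{\rm inj}(G)=\chi(H)$---is precisely the graph the paper later calls $G^{(\ast)}$ and uses in Section 4 (equation (\ref{inj-cubic})) to prove Theorem \ref{Delta}, so your route is fully consonant with the paper's machinery, just applied earlier. The verification details all check out: for $P_n$ ($n\geq 4$) the pattern $1,1,2,2,\ldots$ works and two colors are forced by $e_1,e_3$; for $C_n$ the step-two relation gives a single $n$-cycle when $n$ is odd and two components on $n/2$ vertices each when $n$ is even, whence the values $2$ (for $4\mid n$) and $3$ (otherwise) follow from the chromatic number of cycles. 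Two degenerate cases deserve the explicit care you flagged: for $n=3$ the conflict graph is $K_3$ via the triangle clause (not via distance two, since the edges of a triangle are pairwise adjacent), and for $n=4$ the two ``cycles of length $n/2=2$'' are in fact single edges $e_1e_3$ and $e_2e_4$, which are still bipartite, so the conclusion $\chi(H)=2$ is unaffected. With those two boundary cases spelled out, the proof is complete.
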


For the case of trees, they gave the following bound:

\begin{proposition}\label{CCCD}\cite{CCCD}
 For any tree of order $n\geq 2$, $1\leq \chi_{\rm{inj}}'(G)\leq 3$.
\end{proposition}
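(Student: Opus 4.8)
The lower bound is immediate: a tree of order $n\ge 2$ contains at least one edge, so at least one color is needed. All the work is in the upper bound $\chi_{\rm{inj}}'(G)\le 3$, and the plan is to exhibit one explicit coloring rather than to argue greedily, since a greedy bound would have to control the colors of all edges incident to a grandparent, whose number is unbounded.

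First I would root the tree $G$ at an arbitrary vertex $r$ and let $\mathrm{depth}(v)$ be the distance from $r$ to $v$, with $\mathrm{depth}(r)=0$. Every edge $e$ has a unique endpoint closer to the root, which I call its top $t(e)$, the other endpoint being its child endpoint. I would then color each edge by
\[
 c(e)=\mathrm{depth}(t(e))\bmod 3,
\]
using the three colors $\{0,1,2\}$. The virtue of this rule is that the color of an edge depends only on the depth of its parent endpoint, so it is completely determined by the rooted structure and trivially well defined.

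The heart of the argument is to check that $c$ is a valid injective edge-coloring, i.e.\ that any two edges $e_1,e_3$ at distance $2$ receive different colors. Such a pair lies on a path $a-b-c-d$ of length $3$ with middle edge $bc$; since $G$ is a tree this path is a geodesic and its four vertices are distinct, so the tops $t(e_1)\in\{a,b\}$ and $t(e_3)\in\{c,d\}$ are distinct vertices. I would organize the verification around the \emph{apex} of the path, the unique vertex of $\{a,b,c,d\}$ nearest the root (the depth is unimodal along a path in a tree, so the apex is unique). A short case analysis then shows: when the apex is an interior vertex ($b$ or $c$), the tops $t(e_1),t(e_3)$ form a parent and its child, whose depths differ by $1$; and when the apex is an endpoint ($a$ or $d$), the tops form a grandparent and its grandchild, whose depths differ by $2$. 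In every case the two depths differ by $1$ or $2$, hence are distinct modulo $3$, so $c(e_1)\ne c(e_3)$.

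The only genuine obstacle is this apex case analysis: one must be certain that no configuration of two distance-$2$ edges forces the two tops to have depths congruent modulo $3$. Because the two tops can never coincide (the edges share no vertex) and the tree-geodesic structure pins their depth gap to exactly $1$ or $2$, such a congruence cannot arise, and three colors always suffice. I note that the same rule applied to a path yields a valid $3$-coloring even though $2$ colors are known to be enough (Proposition~\ref{cyclechaine}); thus $3$ is the bound delivered by this uniform argument, matching the claimed upper bound.
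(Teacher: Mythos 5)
Your proof is correct, but it cannot be compared line-by-line with anything in this paper: Proposition~\ref{CCCD} is stated here as a quoted result of Cardoso et al.~\cite{CCCD}, and the paper gives no proof of it at all. So what you have produced is a self-contained replacement for the citation. Checking it: the lower bound is trivial as you say; for the upper bound, your rule $c(e)=\mathrm{depth}(t(e))\bmod 3$ is well defined, and the verification goes through because in a tree any two edges $e_1=ab$, $e_3=cd$ at distance $2$ lie on a path $a\hbox{-}b\hbox{-}c\hbox{-}d$ with four distinct vertices (no triangles exist), the depth sequence along this path strictly decreases to the least common ancestor of $a$ and $d$ and then strictly increases (each edge changes depth by exactly $1$, so the apex is unique), and your four apex cases do exhaust the possibilities: apex $b$ gives tops $b,c$ with depth gap $1$; apex $c$ gives tops $b,c$ with gap $1$; apex $a$ gives tops $a,c$ with gap $2$; apex $d$ gives tops $b,d$ with gap $2$. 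A gap of $1$ or $2$ is nonzero modulo $3$, so the coloring is injective. Two incidental remarks: since a tree has no cycle of length three, the path case is the only one you need to treat, which you implicitly use and could state explicitly; and your coloring is not optimal on paths (it uses $3$ colors where Proposition~\ref{cyclechaine} gives $2$), but the proposition only claims the bound $3$, so this costs nothing.
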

 The injective chromatic index is a parameter  interesting by it-self and determine bounds for this parameter and the injective chromatic index of graphs with maximum degree 3 graphs is already a good challenge. It is our motivation. 
 
In 2019, Axenovich et  \textit{al.} \cite{axia2019} introduced the notion of induced arboricity of a graph $G$, denoted by $\text{ia}(G)$, it is the smallest number $k$ such that the edges of $G$ can be covered with $k$ induced forests in $G$. 
\\
For a graph parameter $\rm p$ and a class of graphs $F$, let  $\text{p}( F ) = \sup \{\text{p}( G ) : G \in 
F\}$.\\
In \cite{axia2019}, the authors provide bounds on $\text{ia}(F)$ when $F$ is the class of planar graphs, the class of $d$-degenerate graphs, or the class of graphs having tree-width at most $d$. They prove that if $F$ is the class of planar graphs, then $8\leq \text{ia}( F ) \leq 10$. 
They  establish similar results for the induced star
arboricity of classes of graphs.
The star arboricity of a graph $G$,  denoted by $\text{isa}(G)$, is defined as the smallest number of induced star-forests covering the edges of $G$. 
\\

They prove that if $F$ is the class of planar graphs, then $18\leq \text{isa}(F)\leq 30$. \\

 By applying the Proposition 2.2 of \cite{CCCD} we have the following easy observation:

\begin{observation}
For any graph $G$, we have $\chi'_{\rm {inj}}(G)={\rm isa}(G)$.
\end{observation}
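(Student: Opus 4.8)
The plan is to establish the two inequalities $\chi'_{\rm {inj}}(G)\le {\rm isa}(G)$ and ${\rm isa}(G)\le \chi'_{\rm {inj}}(G)$ at once, by proving a single structural equivalence: a set $S\subseteq E(G)$ is a color class of some injective edge-coloring (equivalently, a set of edges no two of which are at distance $2$ or lie in a common triangle) \emph{if and only if} $S$ is the edge set of an induced star forest of $G$, where throughout I write $V(S)$ for the set of vertices incident to an edge of $S$ and call $S$ induced when $G[V(S)]$ has no edges other than those of $S$. Once this equivalence is available, both directions are immediate: the color classes of an optimal injective edge-coloring form a partition of $E(G)$ into $\chi'_{\rm {inj}}(G)$ induced star forests, so ${\rm isa}(G)\le \chi'_{\rm {inj}}(G)$; conversely, given a cover of $E(G)$ by ${\rm isa}(G)$ induced star forests, assigning each edge (arbitrarily) one forest that contains it yields color classes that are subsets of injectively admissible sets, hence themselves admissible, so $\chi'_{\rm {inj}}(G)\le {\rm isa}(G)$.

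First I would check that every injective color class $S$ is a star forest. Since no two edges of $S$ may be the extremities of a path of length $3$, the set $S$ contains no $P_4$ as a subgraph; and since no two edges of $S$ may lie in a common triangle, $S$ contains no $C_3$. As every cycle of length at least $4$ already contains a $P_4$, the graph carried by $S$ is acyclic and each component has diameter at most $2$, i.e.\ is a star, so $S$ is a star forest.

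The main point, and the step I expect to be the real obstacle, is upgrading \emph{star forest} to \emph{induced} star forest. Here I would argue by contradiction: suppose some edge $e'=pq\in E(G)\setminus S$ has both endpoints in $V(S)$. If $p$ and $q$ lie in the same star component of $S$ they are two leaves of a common center, and $e'$ closes a triangle through two edges of $S$, which is forbidden. If they lie in different components, choose an $S$-edge $pp'$ at $p$ and an $S$-edge $qq'$ at $q$ (these exist because every vertex of $V(S)$ is $S$-incident); then $p'pqq'$ is a path of length $3$ whose extremities $pp'$ and $qq'$ both belong to $S$, again forbidden. Hence no such $e'$ exists and $S$ is induced. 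The reverse implication runs symmetrically: if $F$ is an induced star forest and two of its edges were at distance $2$ (resp.\ in a common triangle), the connecting edge of $G$ would have both endpoints in $V(F)$ and so, by inducedness, would lie in $F$, creating a $P_4$ (resp.\ a triangle) inside $F$ and contradicting that $F$ is a star forest. Combining the two inequalities gives $\chi'_{\rm {inj}}(G)={\rm isa}(G)$.
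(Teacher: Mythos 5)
Your proof is correct and follows essentially the same route as the paper: the paper derives this observation by invoking Proposition 2.2 of \cite{CCCD}, which is precisely the equivalence you establish, namely that a color class of an injective edge-coloring is exactly the edge set of an induced star forest. You simply work out in full the details the paper leaves to that citation (including the correct handling of the cover-versus-partition subtlety via closure of admissibility under subsets).
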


 In what follows we will use the terminology of injective edge-coloring, as before, we find that it is easier to handle colors. \\
 
We now present the relationships of the injective edge-coloring with other colorings of graphs and the deduced bounds for  the injective chromatic index. \\

 \subsection*{Strong edge-coloring}
 A strong $k$-edge-coloring of a graph $G$ is an assignment of $k$ colors to the edges of $G$ in such a way that any two edges at distance at most two are assigned distinct colors. The minimum number of colors for which a strong edge-coloring of $G$ exists is the strong chromatic index of $G$, denoted $\chi'_{s}(G)$. This notion was introduced
by Fouquet and Jolivet \cite{FJ1,FJ2}. In 1985, Erd\H os and Ne\v set\v ril  conjectured that the strong chromatic index of a graph is at most $\frac{5}{4}\Delta^2(G)$ if $\Delta(G)$ is even and $\frac{1}{4}(5\Delta^2(G)-2\Delta(G) + 1)$ if $\Delta(G)$ is odd. Andersen \cite{A} proved the conjecture for $\Delta(G)= 3$. When $\Delta$ is large enough, Molloy and Reed \cite{MR} showed that $\chi'_s(G)\leq  1.998\Delta^2(G)$. This result was improved by Bruhn and Joos \cite{BJ18} who proved  $\chi'_s(G)\leq  1.93\Delta^2(G)$ for $\Delta$ large enough. 
 Let us give an immediate remark. For any graph $G$, we have:
$$\chi_{\rm inj}'(G)\leq \chi_{s}'(G)$$

 \subsection*{Acyclic coloring}
A proper vertex coloring of a graph $G$ is acyclic if there is no bicolored cycle in $G$. In other words, the union of any two color classes induces a forest. The acyclic chromatic
number, denoted by $\chi_a(G)$, of a graph $G$ is the smallest integer $k$ such that $G$ has an acyclic $k$-coloring. This notion was introduced by Gr\"unbaum \cite{Gr} in 1973 and studied by Mitchem \cite{Mi}, Albertson and Berman \cite{AB}, and Kostochka \cite{Ko}.
In 1979, Borodin \cite{Bo} confirmed the conjecture of Gr\"unbaum by proving :
 \begin{theorem}\label{thmBo}\cite{Bo}
Every planar graph is acyclically 5-colorable.
 \end{theorem}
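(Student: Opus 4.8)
The plan is to argue by contradiction using the standard combination of a minimal counterexample and the discharging method. Suppose $G$ is a planar graph that is not acyclically $5$-colorable and has the fewest vertices among all such graphs, and fix a plane embedding. First I would record the structural consequences of minimality. Since an acyclic $5$-coloring of a disconnected graph, or of a graph with a cut-vertex, is obtained by coloring the pieces separately and permuting colors, $G$ is $2$-connected. A vertex $v$ of degree at most $2$ is reducible: color $G-v$ by minimality, and then the colors excluded for $v$ — those of its at most two neighbors, together with the few colors that could close a bicolored cycle through $v$ — never exhaust the palette of five, possibly after recoloring a single neighbor. Hence $\delta(G)\geq 3$.

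Next I would set up charges. By Euler's formula, assigning to each vertex $x$ the charge $d(x)-4$ and to each face $f$ the charge $d(f)-4$ yields total charge $-8$:
$$\sum_{v\in V}(d(v)-4)+\sum_{f\in F}(d(f)-4)=-4\bigl(|V|-|E|+|F|\bigr)=-8.$$
The only objects carrying negative initial charge are vertices of degree $3$ and triangular faces. The goal is to design discharging rules moving charge from the positive reservoirs — large vertices and large faces — to these negative recipients, so that, in the absence of any reducible configuration, every vertex and every face ends with nonnegative charge, contradicting the total being $-8$.

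The heart of the argument is the list of reducible configurations and their analysis. These are local structures built from $3$- and $4$-vertices and short faces — for instance a $3$-vertex adjacent to two further small-degree vertices, or a face of small length almost all of whose incident vertices have degree $3$. For each configuration I would delete or contract a carefully chosen piece, invoke minimality to color the smaller graph, and then extend. The extension step is where the acyclic condition bites: coloring a deleted vertex $v$ must avoid not only the colors of its neighbors (the properness constraint) but also every color $a$ for which two equal-colored neighbors of $v$ are already joined by a path alternating in two colors, since completing such a path through $v$ would create a bicolored cycle. Counting these forbidden colors, and, when necessary, recoloring a neighbor to break a threatening bicolored path before inserting $v$, is the technical core of each reducibility proof.

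The main obstacle, and the reason this result is genuinely hard rather than a routine discharging exercise, is precisely the global nature of the acyclic constraint inside the reducibility proofs: unlike proper coloring, where a color is forbidden only by the immediate neighbors, here a single pair of same-colored vertices far apart can block a color through a bicolored path, so one must control two-colored components across the whole graph while recoloring. Managing this forces a large and delicate case analysis, and the crux is to find a set of reducible configurations rich enough that the discharging actually balances, while each configuration remains reducible despite these bicolored-path complications.
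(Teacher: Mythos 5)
The paper contains no proof of this statement: it is Borodin's 1979 theorem, quoted with a citation to \cite{Bo} and used as a black box (together with Theorem \ref{AS1}) to bound the injective chromatic index of planar graphs. So the only possible comparison is against Borodin's own argument, whose method your outline correctly names --- minimal counterexample, reducible configurations, and discharging based on the identity $\sum_{v}(d(v)-4)+\sum_{f}(d(f)-4)=-8$.

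However, what you have written is a strategy, not a proof, and the genuine gap is exactly where you stop. All of the mathematical content of the theorem is deferred: you never exhibit the list of reducible configurations, never prove any of them reducible under the acyclic constraint, and never state the discharging rules or verify that a graph avoiding all the configurations ends with every vertex and every face carrying nonnegative charge. Your own closing paragraph concedes this (``the crux is to find a set of reducible configurations rich enough that the discharging actually balances''). In Borodin's proof this crux occupies many pages and a very large set of configurations, and the recoloring arguments needed to destroy threatening bicolored paths are precisely the delicate part. Even the one reduction you do sketch --- that a vertex of degree at most $2$ can be colored ``possibly after recoloring a single neighbor'' --- is asserted rather than proved, and recoloring that neighbor can itself create bicolored cycles elsewhere in the graph; this cascading, global nature of the acyclicity constraint is the very difficulty you correctly identify but do not resolve. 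As it stands, the proposal establishes nothing beyond the (correct) choice of method.
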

Since  $\chi'_{\rm inj}(G)={\rm isa}(G)$, we can say that the authors of \cite{axia2019} (Theorem 8) proved: 
\begin{theorem}\label{AS1}
Let $G$ be a graph with $\chi_a(G) = k$. Then $\chi_{\rm inj}'(G)\leq \frac{3k(k-1)}{2}$ .
\end{theorem}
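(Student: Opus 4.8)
The plan is to argue directly with injective edge-colorings, building the required coloring out of an optimal acyclic vertex coloring rather than translating through the identity $\chi'_{\rm inj}(G)={\rm isa}(G)$. First I would fix an acyclic coloring of $G$ with color classes $V_1,\dots,V_k$, so that for every pair $i<j$ the subgraph $F_{ij}:=G[V_i\cup V_j]$ is a forest; this is exactly the acyclicity hypothesis. Since the vertex coloring is proper, every edge of $G$ joins two distinct classes and therefore lies in exactly one $F_{ij}$, so the $\binom{k}{2}$ forests $F_{ij}$ partition $E(G)$.

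Next I would color the edges of each forest separately, assigning to $F_{ij}$ a palette $P_{ij}$ of three colors, with the palettes $P_{ij}$ pairwise disjoint. By Proposition \ref{CCCD} every tree admits an injective edge-coloring with at most $3$ colors, and since a forest is a disjoint union of trees with no edges between components, no path of length three can span two components; hence each $F_{ij}$ admits an injective edge-coloring using only the colors of $P_{ij}$. The total number of colors used is then $3\binom{k}{2}=\frac{3k(k-1)}{2}$, which is the claimed bound, so what remains is to verify that the union of these local colorings is a valid injective edge-coloring of the whole of $G$.

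The crux, and the only place that needs care, is checking the global injective condition for the two configurations that impose a constraint: a triangle and a path of length three. For a triangle, its three vertices receive three distinct classes, so its three edges lie in three distinct forests $F_{ij}$ and draw from three disjoint palettes; the constraint is automatically satisfied. For a path $e_1e_2e_3=x_0x_1x_2x_3$, if $e_1$ and $e_3$ lie in different forests they again use disjoint palettes and differ. The delicate case is $e_1,e_3$ in the same forest $F_{ij}$: here I would observe that $x_0,x_1,x_2,x_3\in V_i\cup V_j$, so the middle edge $e_2=x_1x_2$ also belongs to $F_{ij}$, whence $e_1,e_2,e_3$ form a path of length three \emph{inside} $F_{ij}$ and the local injective coloring of $F_{ij}$ already guarantees $c(e_1)\neq c(e_3)$. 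This observation that the connecting middle edge necessarily stays inside $F_{ij}$ is what makes a purely per-forest coloring sufficient globally, and I expect it to be the main (if short) obstacle to state correctly; everything else is bookkeeping on the disjoint palettes.
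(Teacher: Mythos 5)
Your proof is correct and self-contained, and it takes a genuinely different route from the paper: the paper never constructs the coloring at all, but derives Theorem \ref{AS1} by combining the observation $\chi'_{\rm inj}(G)={\rm isa}(G)$ (injective chromatic index equals induced star arboricity) with Theorem 8 of \cite{axia2019}, so the bound is imported by pure translation from the induced-star-arboricity setting. You instead run the natural construction inside the injective edge-coloring language: partition $E(G)$ into the $\binom{k}{2}$ induced forests $F_{ij}=G[V_i\cup V_j]$ given by an acyclic $k$-coloring, spend a private palette of $3$ colors on each forest (Proposition \ref{CCCD}, extended componentwise from trees to forests, which you justify correctly since injective constraints never cross components), and then check the two global configurations. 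The step you single out as the crux --- if $e_1$ and $e_3$ lie in the same $F_{ij}$ and are linked by a middle edge $e_2$, then $e_2\in F_{ij}$ because $F_{ij}$ is an \emph{induced} subgraph, so the local coloring already separates $e_1$ from $e_3$ --- is exactly where inducedness is indispensable; with a mere partition of $E(G)$ into arbitrary forests the argument would collapse, since $e_2$ could leave the forest and no local constraint would relate $e_1$ to $e_3$. This is the same reason the cited result must work with \emph{induced} star forests. Your triangle case is also handled soundly (properness forces three distinct forests, hence three disjoint palettes). What your route buys is a proof readable without consulting \cite{axia2019}; what the paper's route buys is brevity and the explicit bridge to ${\rm isa}$, which it reuses elsewhere.
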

By using Theorem \ref{thmBo}, they deduce that, for any planar graph:  $\chi_{\rm inj}'(G)\leq 30$. Moreover they construct an example of planar graph with injective chromatic index equal to 18.\\

Borodin, Kostochka, and  Woodall \cite{BKW} proved that, if $G$ is a planar graph with girth $g\geq 5$ (respectively, $g\geq 7$), then $\chi_a(G)\leq 4$ (respectively, $\chi_a(G)\leq 3$).
\begin{corollary}\label{1.5}
Let $G$ be a planar graph with girth $g$,

\begin{enumerate}
\item If $g\geq5$, then $\chi_{\rm inj}'(G)\leq 18$.
\item If $g\geq7$, then $\chi_{\rm inj}'(G)\leq 9$.
\end{enumerate} 
\end{corollary}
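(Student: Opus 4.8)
The plan is to obtain both bounds as immediate consequences of Theorem~\ref{AS1} combined with the acyclic colouring results of Borodin, Kostochka and Woodall quoted just above the statement. The only genuine content is matching each girth hypothesis to the correct upper bound on the acyclic chromatic number and then substituting it into the formula $\frac{3k(k-1)}{2}$.

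First I would treat part~(1). If $G$ is planar with girth $g\geq 5$, the cited theorem of \cite{BKW} gives $\chi_a(G)\leq 4$. Theorem~\ref{AS1} bounds $\chi'_{\rm inj}(G)$ in terms of $\chi_a(G)=k$ by $\frac{3k(k-1)}{2}$, so writing $k:=\chi_a(G)\leq 4$ I would conclude $\chi'_{\rm inj}(G)\leq \frac{3\cdot 4\cdot 3}{2}=18$. For part~(2) the argument is identical with a sharper input: for $g\geq 7$ the same reference yields $\chi_a(G)\leq 3$, and substituting $k=3$ into the formula gives $\chi'_{\rm inj}(G)\leq \frac{3\cdot 3\cdot 2}{2}=9$.

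There is essentially no obstacle here beyond bookkeeping, but the one point that should not be glossed over is a monotonicity remark: Theorem~\ref{AS1} is phrased for a graph whose acyclic chromatic number equals $k$ exactly, whereas \cite{BKW} only provides an inequality $\chi_a(G)\leq k$. Since $\frac{3k(k-1)}{2}$ is increasing in $k$ for $k\geq 1$, an upper bound on $\chi_a(G)$ transfers directly to an upper bound on $\chi'_{\rm inj}(G)$, which legitimises both substitutions and closes that small gap.
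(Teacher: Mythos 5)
Your proposal is correct and follows exactly the route the paper intends: combine Theorem~\ref{AS1} with the Borodin--Kostochka--Woodall bounds $\chi_a(G)\leq 4$ (for $g\geq 5$) and $\chi_a(G)\leq 3$ (for $g\geq 7$), then compute $\frac{3k(k-1)}{2}$. Your monotonicity remark, justifying the passage from the inequality $\chi_a(G)\leq k$ to the bound stated for $\chi_a(G)=k$, is a sensible piece of care that the paper leaves implicit.
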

\subsection*{Star coloring}
A star coloring of $G$ is a proper vertex coloring of $G$
such that the union of any two color classes induces a star forest in $G$, i.e. every component of this union is a star. The star chromatic number of $G$, denoted by $\chi_{st}(G)$, is the smallest integer $k$ for which $G$ admits a star coloring with $k$ colors. This notion was first mentioned by Gr\"unbaum \cite{Gr} in 1973 (see also \cite{FRR}). Albertson et al. \cite{ACKKR} proved that for any planar graph $G$, $\chi_{st}(G)\leq 20$. \\
By using the star chromatic number we obtain a bound for the injective chromatic index. 

\begin{theorem}\label{AS2}
Let G be a graph with $\chi_{st}(G) = k$. Then $\chi_{\rm inj}'(G)\leq \frac{k(k-1)}{2}$.
\end{theorem}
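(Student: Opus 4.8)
The plan is to build an injective edge-coloring directly from a star coloring. Let $\phi\colon V(G)\to\{1,\dots,k\}$ be a star coloring of $G$. Since $\phi$ is in particular proper, for every edge $uv$ the two endpoints receive distinct colors, so I would color the edge $uv$ by the unordered pair $\{\phi(u),\phi(v)\}$. This assigns to each edge one of the $\binom{k}{2}=\frac{k(k-1)}{2}$ two-element subsets of $\{1,\dots,k\}$, so it remains only to check that this edge-coloring is injective, i.e. that any two edges at distance exactly $2$ and any two edges of a common triangle receive different pairs.

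The triangle case is immediate: if $x,y,z$ span a triangle then $\phi(x),\phi(y),\phi(z)$ are pairwise distinct (each pair is adjacent), so the three pairs $\{\phi(x),\phi(y)\}$, $\{\phi(y),\phi(z)\}$, $\{\phi(x),\phi(z)\}$ are the three distinct $2$-subsets of a $3$-set, and the three edges get three different colors.

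The heart of the argument is the distance-$2$ case, and this is where the full strength of the star coloring (rather than mere properness or acyclicity) will be used. Consider a path $w\,x\,y\,z$ of length $3$, with end-edges $e_1=wx$ and $e_3=yz$, and suppose for contradiction that $c(e_1)=c(e_3)$, say both equal $\{a,b\}$ with $a\neq b$. Since $x$ and $y$ are adjacent, $\phi(x)\neq\phi(y)$, and both lie in $\{a,b\}$; after relabelling assume $\phi(x)=a$ and $\phi(y)=b$. Properness on the edges $wx$ and $yz$ then forces $\phi(w)=b$ and $\phi(z)=a$, so the four vertices are colored $b,a,b,a$ along the path. Consequently all three edges $wx$, $xy$, $yz$ join a color-$a$ vertex to a color-$b$ vertex and hence lie in the subgraph $H$ induced by the two color classes $a$ and $b$. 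These three edges lie in one connected component of $H$ (consecutive edges share a vertex), yet $wx$ and $yz$ share no vertex; since a star requires all its edges to meet in a common vertex, that component cannot be a star, contradicting the assumption that $H$ is a star forest.

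I expect the only delicate point to be the bookkeeping in the distance-$2$ case, namely correctly deducing the alternating pattern $b,a,b,a$ from properness and the pair equality, together with the observation that a length-$3$ path cannot sit inside a single star; everything else is routine. Note that acyclicity alone would not suffice here, since the bicolored $P_4$ produced above is a tree; it is precisely the star-forest condition that rules it out, which explains why this bound improves by a factor $3$ on the weaker bound of Theorem~\ref{AS1} obtained from the acyclic chromatic number.
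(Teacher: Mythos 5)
Your proof is correct and takes essentially the same approach as the paper: coloring each edge by the unordered pair $\{\phi(u),\phi(v)\}$ is exactly the paper's decomposition of $E(G)$ into the $\binom{k}{2}$ bicolored subgraphs $H_{i,j}$ (each a star forest, each assigned a single color). The only difference is that you spell out the verification --- the triangle case and the bicolored $P_4$ contradiction --- which the paper leaves implicit in its one-line assertion that this yields an injective edge-coloring.
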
 

The bound obtained for planar graphs by using Theoerm \ref{AS2} with the result of Albertson et al. \cite{ACKKR}
is not interesting because of the bound obtained by using the acyclic coloring. But in some cases it gives an interesting bound.\\
In \cite{BCMRW} it is proved that, if $G$ is planar and has girth $g\geq 13 $, then $\chi_{st}(G)\leq 4$.

 Theorem \ref{AS2} yields the following corollary which completes Corollary \ref{1.5}:

\begin{corollary}
 If $G$ is a planar graph with girth $g\geq 13$, then $\chi_{\rm inj}'(G)\leq 6$.

\end{corollary}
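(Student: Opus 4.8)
The plan is to combine the two ingredients stated immediately before the statement, since the corollary is a direct specialization. First I would invoke the result of \cite{BCMRW}, quoted just above, which guarantees that any planar graph $G$ of girth $g\geq 13$ satisfies $\chi_{st}(G)\leq 4$. This is the only substantive input, and it carries all of the structural work; everything else is an application of a bound already proved in the excerpt.

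Next I would apply Theorem \ref{AS2}, which asserts $\chi_{\rm inj}'(G)\leq \frac{k(k-1)}{2}$ whenever $\chi_{st}(G)=k$. Writing $k=\chi_{st}(G)$ for the actual star chromatic number of $G$, we know $k\leq 4$, and since the function $k\mapsto \frac{k(k-1)}{2}$ is monotone increasing for $k\geq 1$ (its values at $k=1,2,3,4$ being $0,1,3,6$), we obtain
$$\chi_{\rm inj}'(G)\leq \frac{k(k-1)}{2}\leq \frac{4\cdot 3}{2}=6,$$
which is exactly the claimed conclusion.

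The only point meriting a word of care is the passage from the inequality $\chi_{st}(G)\leq 4$ to the numerical bound $6$: Theorem \ref{AS2} is phrased for the exact value $\chi_{st}(G)=k$, so one should note that it suffices to apply it with the true value $k\leq 4$ and then use the monotonicity above, rather than forcing $k=4$. I do not expect any genuine obstacle: the entire difficulty is absorbed into the cited girth bound of \cite{BCMRW} and into the previously established Theorem \ref{AS2}, so once those are in hand the corollary follows immediately.
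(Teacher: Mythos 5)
Your proof is correct and follows exactly the paper's intended argument: combine the girth-$13$ star coloring bound $\chi_{st}(G)\leq 4$ from \cite{BCMRW} with Theorem \ref{AS2} to get $\chi_{\rm inj}'(G)\leq \frac{4\cdot 3}{2}=6$. Your extra remark on monotonicity of $k\mapsto \frac{k(k-1)}{2}$ is a fair point of care (the paper leaves it implicit), but it does not constitute a different approach.
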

In the reverse direction we have:

\begin{theorem}\label{Reverse}
Let $G$ be a graph with $\chi_{\rm inj}'(G)=k$. Then $\chi_{st}(G)\leq 2^k$.
\end{theorem}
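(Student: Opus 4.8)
The plan is to convert the injective edge-coloring into a vertex-coloring whose colors are subsets of $\{1,\dots,k\}$, which forces at most $2^k$ vertex colors, and then verify that this vertex-coloring is a star coloring. Fix an injective edge-coloring $c$ of $G$ with $k$ colors and let $F_i=\{e\in E(G):c(e)=i\}$ be its color classes, so that $F_1,\dots,F_k$ partition $E(G)$.

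The first step is to understand one class $F_i$. Since no two edges of $F_i$ may be at distance $2$, the subgraph $(V(G),F_i)$ has no path on four vertices (its two end-edges would be equally colored edges at distance $2$); and since two edges in a common triangle must receive different colors, $F_i$ is triangle-free. A connected graph that is triangle-free and contains no path on four vertices is a star (if it had two edges, a vertex $c$ of degree at least $2$ would force all its neighbors to be leaves, else a $P_4$ appears). Hence every component of $(V(G),F_i)$ is a star $K_{1,t}$, allowing isolated vertices and single edges as degenerate cases. For each nontrivial component I designate one \emph{center}: the hub when $t\ge 2$, and for a single-edge component the endpoint that is smaller in a fixed linear order on $V(G)$.

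Next I define, for each $v\in V(G)$,
$$\psi(v)=\{\, i\in\{1,\dots,k\} : v \text{ is the center of its component in } (V(G),F_i)\,\}.$$
As $\psi$ takes values in the power set of $\{1,\dots,k\}$, it uses at most $2^k$ colors. Properness is immediate: every edge $uv$ lies in some $F_i$, and inside the corresponding star exactly one of $u,v$ is the center, so $i$ belongs to exactly one of $\psi(u),\psi(v)$ and the two sets differ.

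The crux, and the step I expect to be the main obstacle, is ruling out a bicolored path on four vertices. Suppose $w\,x\,y\,z$ were such a path, so $\psi(w)=\psi(y)$ and $\psi(x)=\psi(z)$ with these two sets distinct. Let $i$ be the color of the middle edge $xy$, and assume without loss of generality that $x$ is the center of its $F_i$-star (the case where $y$ is the center is symmetric, with $w$ playing the role of $z$). Then $i\in\psi(x)=\psi(z)$, so $z$ is the center of an $F_i$-star; since components are vertex-disjoint, this star is distinct from the one centered at $x$ and carries an $F_i$-edge $zu$. Here $u\notin\{x,y,z\}$: indeed $u=y$ would force $yz\in F_i$, putting $y$ in $z$'s star although $y$ is already a leaf of $x$'s star, while $u=x$ would make $x$ and $z$ two centers of one component. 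Thus $x\,y\,z\,u$ is a genuine path on four vertices whose end-edges $xy$ and $zu$ both have color $i$, contradicting the injectivity of $c$. This rules out any bicolored $P_4$, so $\psi$ is a star coloring and $\chi_{st}(G)\le 2^k$.
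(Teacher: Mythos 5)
Your proof is correct, but it takes a genuinely different route from the paper's. Both arguments encode vertices by subsets of the $k$ edge-colors, which is where the bound $2^k$ comes from, but the encodings and the verifications differ. The paper sets $\psi(v)$ equal to the set of colors appearing on the edges incident with $v$ (sending vertices of degree one to the empty set), and must therefore treat isolated edges and pendant vertices as special cases; its bicolored-$P_4$ step first deduces that the two middle edges of such a path carry the same color $\alpha$, and then checks that $\alpha$ cannot appear at the far endpoint. You instead begin by proving that each color class is a star forest (the structure underlying the paper's Observation 1 that $\chi'_{\rm inj}(G)={\rm isa}(G)$), fix a center in each star, and let $\psi(v)$ record the colors for which $v$ is a center. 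This makes properness automatic, since each edge has exactly one center among its endpoints, and turns the $P_4$ step into a direct exhibition of two equally colored edges at distance two; no case analysis for low-degree vertices or isolated edges is needed, and your decomposition lemma (connected, triangle-free, $P_4$-free implies star) is proved correctly. What the paper's incidence encoding buys in exchange is the refinement stated in the remark following the theorem: when $\delta(G)\geq 2$ no vertex receives the empty set, so $\chi_{st}(G)\leq 2^k-1$; your center-based encoding does not give this for free, because a vertex that is a leaf of every star containing it still receives the empty set, whatever the minimum degree.
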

We  can improve a little bit Theorem \ref{Reverse} when the  graph $G$ has a  minimum degree  at least 2. In this case, if $\chi_{\rm inj}'(G)=k$, then $\chi_{st}(G)\leq 2^k-1$ (see Section $3$).\\

\subsection*{Upper bound}
We give an upper bound for the injective chromatic index of a graph in terms of its maximum degree: 

\begin{theorem}\label{Delta}
Let $G$ be a graph with any maximum degree $\Delta(G)\geq 3$. Then $\chi_{\rm inj}'(G)\leq 2(\Delta(G)- 1)^2.$
\end{theorem}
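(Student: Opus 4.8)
The plan is to recast the problem as an ordinary vertex-coloring problem on an auxiliary graph and then sharpen the greedy bound with Brooks' theorem. Define the \emph{conflict graph} $H$ whose vertex set is $E(G)$ and in which two edges of $G$ are adjacent exactly when they are forbidden to share a color, i.e.\ when they are the two end-edges of a path on three edges, or two edges of a common triangle. By the very definition of an injective edge-coloring, such a coloring of $G$ is precisely a proper vertex-coloring of $H$; hence $\chi'_{\mathrm{inj}}(G)=\chi(H)$, and it suffices to show $\chi(H)\le 2(\Delta(G)-1)^2$.

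First I would bound the maximum degree of $H$. Fix an edge $e=uv$. Every edge $f$ conflicting with $e$ is obtained by choosing an endpoint $t\in\{u,v\}$ of $e$, a middle edge $g=ts$ incident to $t$ with $g\neq e$, and then an edge $f$ incident to $s$ with $f\neq g$; this description also captures the triangle case. There are at most $\Delta(G)-1$ middle edges $g$ at each of the two endpoints, and at most $\Delta(G)-1$ choices of $f$ for each fixed $g$, so $e$ has at most $2(\Delta(G)-1)^2$ conflicting edges. Therefore $\Delta(H)\le 2(\Delta(G)-1)^2=:D$, and a greedy argument already yields $\chi(H)\le D+1$.

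To delete the extra color I would apply Brooks' theorem componentwise: a connected graph $B$ satisfies $\chi(B)\le\Delta(B)$ unless $B$ is a complete graph or an odd cycle. Since $\Delta(G)\ge 3$ forces $D\ge 8$, any odd-cycle component is $3$-colorable and causes no trouble, and any component on at most $D$ vertices is trivially $D$-colorable. The only dangerous configuration is a component isomorphic to $K_{D+1}$, equivalently a family of $2(\Delta(G)-1)^2+1$ edges of $G$ that pairwise conflict, so everything reduces to proving $\omega(H)\le D$.

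The main obstacle is exactly this exclusion of a clique $K_{D+1}$. Here I would exploit the local geometry of conflicts. Two edges of $G$ that share a vertex conflict only when they lie in a common triangle (a genuine three-edge path has disjoint end-edges), so the edges of any clique of $H$ meeting at a single vertex of $G$ must have mutually adjacent far endpoints, forcing at most $\Delta(G)$ of them at each vertex. Moreover, for the $D$ conflicting edges of a fixed $e=uv$, those lying ``near $u$'' and those lying ``near $v$'' are typically non-adjacent in $H$, because joining a near-$u$ edge to a near-$v$ edge by a path on three edges requires an additional edge linking their far endpoints. Forcing all $\binom{D+1}{2}$ pairs of a hypothetical $K_{D+1}$ to conflict would demand so many such linking edges, packed around the two sides of $e$, that the triangle constraint at the incident vertices can no longer be met within the degree budget $\Delta(G)$; I expect the contradiction to come from balancing these two incompatible requirements, and making it fully rigorous is the delicate part. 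Once $\omega(H)\le D$ is established, Brooks' theorem gives $\chi(H)\le D=2(\Delta(G)-1)^2$, which is the desired bound.
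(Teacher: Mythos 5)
Your framework is exactly the paper's: form the conflict graph ($H$ for you, $G^{(\ast)}$ in the paper) whose vertices are the edges of $G$, observe that $\chi'_{\rm inj}(G)=\chi(H)$, bound $\Delta(H)\le D:=2(\Delta(G)-1)^2$, and invoke Brooks' theorem componentwise, so that everything reduces to showing that no component of $H$ is isomorphic to $K_{D+1}$. Up to that point your argument is correct, and your dismissal of the odd-cycle exception (via $D\ge 8$) is, if anything, cleaner than the paper's.

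The genuine gap is that you never prove that one non-routine claim; you explicitly defer it (``making it fully rigorous is the delicate part''), and the heuristic you sketch does not obviously close it. The paper closes it by a local degree argument which you should compare with your sketch. If some component of $H$ is $K_{D+1}$, pick a vertex of it, i.e.\ an edge $e=uv$ of $G$; it then has \emph{exactly} $D$ conflicting edges. This exactness forces the extremal local structure: $d(u)=d(v)=\Delta(G)$, no triangle through $e$, every neighbor of $u$ and of $v$ has degree $\Delta(G)$, and all $2(\Delta(G)-1)^2$ ``far'' edges are pairwise distinct --- in particular no edge joins two neighbors of $u$, such as $x_1x_2$. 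Now the far edges $x_1z_1,\dots,x_1z_{\Delta(G)-1}$ at a fixed neighbor $x_1$ of $u$ all lie in the clique and pairwise share the vertex $x_1$, so they can only conflict via triangles: the $z_i$ must be pairwise adjacent in $G$. Next, $x_1z_1$ must also conflict with each far edge $x_2t_j$ at a second neighbor $x_2$ of $u$; since $x_1x_2\notin E(G)$ and the neighborhoods of $x_1$ and $x_2$ are already saturated, the middle edge of the required length-three path can only be $z_1t_j$. Hence $z_1$ is adjacent to $x_1$, to $z_2,\dots,z_{\Delta(G)-1}$, and to the $t_j$'s, which pushes the degree of $z_1$ (or, in the degenerate case where some $t_j$ coincides with some $z_i$, the degree of that common vertex) above $\Delta(G)$ --- a contradiction. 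Without this step, or a substitute for it, your proof only yields the greedy bound $\chi'_{\rm inj}(G)\le 2(\Delta(G)-1)^2+1$, not the stated theorem. Note also that your heuristic aims at conflicts between a near-$u$ edge and a near-$v$ edge, whereas the contradiction the paper extracts lives entirely on one side of $e$, between far edges at two different neighbors of the \emph{same} endpoint $u$; that is where the degree budget actually breaks.
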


We suspect that the upper bound of Theorem 1.9 is not  tight. For a lower bound we have only the bound given by the complete graph $K_{\Delta +1}$ of degree $\Delta$, its injective chromatic index is $\frac{(\Delta+1)(\Delta +2)}{2}$.\\
The previous result implies that, for every  subcubic graph $G$, $\chi_{\rm inj}'(G)\leq 8$. 
We propose the following conjecture (see Section 8).
\begin{conjecture}
For every subcubic graph, $\chi_{\rm inj}'(G)\leq 6$. 
\end{conjecture}
\subsection*{Upper Bound for bipartite graphs}
Cardoso et \textit{al.} \cite{CCCD} considered the injective edge-coloring of bipartite graphs. They showed that:
\begin{proposition}\label{Propbiparti}
If $G$ is a bipartite graph with bipartiton $V(G)=V_1\cup V_2$ and $G$ has no isolated vertices, then $\chi_{\rm inj}'(G)\leq\min\{|V_1|,|V_2|\}$.
\end{proposition}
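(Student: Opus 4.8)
The plan is to exhibit an explicit injective edge-coloring that uses at most $\min\{|V_1|,|V_2|\}$ colors. Without loss of generality I would assume $|V_1|\le |V_2|$ and write $V_1=\{v_1,\dots,v_k\}$ with $k=|V_1|$. Since $G$ is bipartite, every edge has exactly one endpoint in $V_1$, so I can define a coloring $c$ by letting $c(e)=i$ whenever the unique $V_1$-endpoint of $e$ is $v_i$. This manifestly uses at most $k=\min\{|V_1|,|V_2|\}$ colors, and it remains only to check that $c$ is injective in the required sense.

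First note that a bipartite graph has no odd cycle, hence no triangle, so the only consecutive triples of edges that arise are the three edges of a path of length $3$. Let such a path run through vertices $w,x,y,z$ (in this order), with end edges $e_1=wx$ and $e_3=yz$ and middle edge $e_2=xy$; since the path is simple, its four vertices are pairwise distinct. Along any path of a bipartite graph the vertices alternate between $V_1$ and $V_2$, so either $w,y\in V_1$ and $x,z\in V_2$, or $x,z\in V_1$ and $w,y\in V_2$.

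In the first case the $V_1$-endpoints of $e_1$ and $e_3$ are $w$ and $y$; in the second case they are $x$ and $z$. In either case these are two distinct vertices of $V_1$, so $c(e_1)\neq c(e_3)$. Thus $c$ is an injective edge-coloring and $\chi'_{\rm inj}(G)\le\min\{|V_1|,|V_2|\}$.

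I expect no real obstacle here: the proof reduces entirely to the two routine observations that bipartiteness rules out the triangle case and forces the colors along a $3$-path, and that the two end edges of such a path have distinct $V_1$-endpoints. Note finally that the hypothesis of no isolated vertices is not strictly needed for this upper bound, since isolated vertices only enlarge the right-hand side; it merely guarantees that the statement is not weakened by vertices carrying no edges.
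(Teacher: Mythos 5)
Your proof is correct. There is, however, no proof in the paper to compare it against: Proposition~\ref{Propbiparti} is quoted from Cardoso et al.~\cite{CCCD}, and the paper states it without argument. Your coloring---assign to each edge the index of its endpoint in the smaller part, then use bipartiteness both to exclude the triangle case and to observe that the two end edges of a path on four distinct vertices have distinct endpoints in that part---is the natural proof of the bound, and your side remark that the no-isolated-vertices hypothesis is not actually needed for the inequality is also accurate.
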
 
The above bound is attained for every complete bipartite graph $K_{p,q}$.\\

We give  an other useful bound in terms of maximum degree.
 \begin{proposition}\label{DeltaBiP}
 If $G=(V_1\cup V_2,E)$ is a bipartite graph with maximum degree $\Delta(G)$ and $G$ has no isolated vertices, then $$\chi_{\rm inj}'(G)\leq\begin{cases}\min\{\Delta_{V_1}(\Delta_{V_2}-1), \Delta_{V_2}(\Delta_{{V_1}}-1)\} +1& \text{ if } \Delta(G)\geq 3\\
 3& \text{ if } \Delta(G)=2\end{cases}$$
 where $\Delta_{{V_1}}=\max\{d(u);~u\in V_1\}$ and   $\Delta_{{V_2}}=\max\{d(u);~u\in V_2\}$.
 \end{proposition}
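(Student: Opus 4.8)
The plan is to prove the bound $\chi_{\rm inj}'(G)\le \Delta_{V_2}(\Delta_{V_1}-1)+1$ by a direct construction; the symmetric bound $\Delta_{V_1}(\Delta_{V_2}-1)+1$ then follows by exchanging the roles of $V_1$ and $V_2$, and taking the minimum gives the claim (the case $\Delta(G)=2$ is handled separately below). The idea is to color edges by their endpoint in $V_2$: for each $w\in V_2$, all edges incident to $w$ receive one common color. This is legitimate because two edges sharing the vertex $w$ are adjacent, hence never at distance $2$ and never in a triangle (a bipartite graph has none), so they may safely share a color. Thus an injective edge-coloring of this ``monochromatic-star'' form is exactly a coloring $f\colon V_2\to\{1,\dots,k\}$ of the centers that avoids giving the same color to the centers of two stars containing a conflicting pair of edges.

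The key step is to identify when two edges lying in different $V_2$-stars conflict. Writing each edge as $uw$ with $u\in V_1$, $w\in V_2$, and using bipartiteness, two disjoint edges $uw$ and $u'w'$ (so $u\ne u'$, $w\ne w'$) are at distance $2$ if and only if the middle edge of the connecting path of length $3$ is $wu'$ or $uw'$, that is, if and only if $w\sim u'$ or $u\sim w'$. In either case the connecting vertex ($u'$ or $u$) lies in $V_1$ and is a common neighbour of $w$ and $w'$. Hence a conflict between an edge of the star at $w$ and an edge of the star at $w'$ forces $N(w)\cap N(w')\neq\emptyset$. Consequently, if $f$ assigns distinct colors to every pair of vertices of $V_2$ having a common neighbour, then no two conflicting edges are monochromatic, and $f$ yields a valid injective edge-coloring. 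Such an $f$ is precisely a proper coloring of the auxiliary graph $H$ on vertex set $V_2$ in which $w\sim_H w'$ whenever $w$ and $w'$ share a neighbour in $V_1$ (the subgraph of $G^2$ induced by $V_2$).

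It then remains to bound $\chi(H)$. For $w\in V_2$, every $H$-neighbour of $w$ is reached by following an edge $wu$ to some $u\in N(w)$ and then an edge $uw'$ to some $w'\neq w$; this yields at most $\sum_{u\in N(w)}(d(u)-1)\le d(w)(\Delta_{V_1}-1)\le \Delta_{V_2}(\Delta_{V_1}-1)$ choices. Therefore $\Delta(H)\le \Delta_{V_2}(\Delta_{V_1}-1)$, and a greedy coloring gives $\chi(H)\le \Delta_{V_2}(\Delta_{V_1}-1)+1$, which is the desired number of colors. Swapping $V_1$ and $V_2$ gives the bound $\Delta_{V_1}(\Delta_{V_2}-1)+1$, and taking the smaller of the two proves the case $\Delta(G)\ge 3$. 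When $\Delta(G)=2$ the graph is a disjoint union of paths and even cycles, so $\chi_{\rm inj}'(G)\le 3$ is immediate from Proposition \ref{cyclechaine}.

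The main thing to get right is the conflict characterization in the second step: one must verify that the only distance-$2$ interactions between two $V_2$-stars are those routed through a shared $V_1$-neighbour, and, conversely, that within a single star no conflicting pair arises (which is exactly where the absence of triangles is used). The argument is otherwise routine, and I expect the resulting bound not to be tight in general, since collapsing each star to a single color is wasteful; indeed it already overshoots on short even cycles and on subcubic bipartite graphs, where the sharper value $6$ requires a separate, more delicate argument.
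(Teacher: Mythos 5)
Your proposal is correct and takes essentially the same approach as the paper: color every edge by (the color of) its endpoint in one part, which reduces the problem to properly coloring the auxiliary graph on that part whose adjacency is ``at distance two / sharing a common neighbour,'' and whose maximum degree is at most $\Delta_{V_2}(\Delta_{V_1}-1)$ (resp.\ $\Delta_{V_1}(\Delta_{V_2}-1)$); the paper constructs exactly this graph (starting from $V_1$) and handles $\Delta(G)=2$ the same way via Proposition~\ref{cyclechaine}. The only difference is cosmetic: you finish with a greedy coloring, whereas the paper invokes Brooks' theorem and separately discusses the complete-graph and odd-cycle exceptions, which is superfluous since the stated bound already carries the $+1$ that greedy provides.
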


The class of subcubic graphs is a class of graphs with many challenging conjectures. Proving results on subcubic graphs can give hints to prove results for  graphs  with any maximum degree.\\
The previous result implies that, for any subcubic bipartite graph $G$, $\chi_{\rm inj}'(G)\leq 7$.\\
We can decrease this bound by one.

 \begin{theorem}\label{cubHea}
 For any subcubic bipartite graph $G$, we have $\chi_{\rm inj}'(G)\leq 6$.
 \end{theorem}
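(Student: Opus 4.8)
The plan is to bypass discharging entirely (a minimal-counterexample argument seems doomed here, since a minimal counterexample would be cubic bipartite and cubic graphs admit no low-degree reducible configuration) and instead reduce the injective edge-coloring to an ordinary vertex-coloring on an auxiliary graph built from one side of the bipartition, then invoke Brooks' theorem. Fix a bipartition $V(G)=V_1\cup V_2$; we may assume $G$ is connected. Define the auxiliary graph $G^2_{V_1}$ on vertex set $V_1$, joining two vertices whenever they share a common neighbour in $V_2$. The first, routine, step is the coloring mechanism underlying Proposition~\ref{DeltaBiP}: given any proper coloring $\psi$ of $G^2_{V_1}$, color every edge $e=uw$ (with $u\in V_1$, $w\in V_2$) by $\psi(u)$. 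I would check this is injective: since $G$ is triangle-free, any two edges at distance $2$ are the outer edges of a path $x_0x_1x_2x_3$, and in both cases ($x_0\in V_1$ or $x_0\in V_2$) their $V_1$-endpoints are distinct vertices with a common neighbour, hence adjacent in $G^2_{V_1}$ and colored differently by $\psi$. This gives $\chi'_{\rm inj}(G)\le \chi(G^2_{V_1})$, and symmetrically $\chi'_{\rm inj}(G)\le\chi(G^2_{V_2})$.

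Next I would bound $\chi(G^2_{V_1})$. Because $G$ is subcubic, each $u\in V_1$ has at most $3$ neighbours, each of degree at most $3$, so $\deg_{G^2_{V_1}}(u)\le 3\cdot 2=6$; thus $\Delta(G^2_{V_1})\le 6$. Applying Brooks' theorem component by component, the only component that fails to be $6$-colorable is a clique $K_7$ (the other Brooks exception, an odd cycle, is $3$-colorable). Hence, away from the single obstruction ``a $K_7$ component'', we immediately obtain $\chi'_{\rm inj}(G)\le 6$.

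The heart of the proof is therefore to determine when $G^2_{V_1}$ can contain a $K_7$, and I would argue this is extremely rigid. If seven vertices of $V_1$ are pairwise adjacent in $G^2_{V_1}$, each has degree exactly $6$ there, which uses up all available slots: each such vertex must be cubic in $G$, have three cubic neighbours, and its three neighbours must contribute six pairwise distinct distance-$2$ vertices (no two of its neighbours sharing a further common $V_1$-vertex). Counting the $\binom{7}{2}=21$ pairs against the $\binom{3}{2}=3$ pairs linked by each (necessarily degree-$3$) common $V_2$-vertex forces exactly seven $V_2$-vertices with any two of the seven $V_1$-vertices sharing exactly one common neighbour. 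This is precisely the point--line incidence of the Fano plane, so the $14$ vertices induce a Heawood graph; since every vertex involved is already cubic with all edges inside this set, it is a whole connected component, and by connectedness $G$ must be the Heawood graph itself.

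It then only remains to settle this one exceptional graph, and I expect this — together with the verification that $K_7$ forces exactly the Heawood graph — to be the main obstacle, since everything before it is soft (a coloring transfer plus Brooks). For the Heawood graph the one-color-per-side scheme genuinely fails from both sides, because the Fano plane is self-dual and so $G^2_{V_2}$ is also a $K_7$; hence neither auxiliary graph helps, and one cannot avoid an ad hoc step. I would finish by exhibiting an explicit injective $6$-edge-coloring of the Heawood graph (one not of the monochromatic-per-vertex form) and checking it directly on its $21$ edges, or alternatively by exploiting the automorphism group of the Fano incidence structure to show that six colors suffice.
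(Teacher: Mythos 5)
Your proposal is correct and follows essentially the same route as the paper: the paper likewise builds the distance-two auxiliary graph on one side of the bipartition, applies Brooks' theorem with maximum degree at most $6$, identifies the $K_7$ exception as forcing $G$ to be the Heawood graph (the unique $(3,6)$-cage), and finishes with an explicit injective edge-coloring of that graph. The only cosmetic differences are that the paper recognizes the Heawood graph via girth, diameter, and cage uniqueness rather than your Fano-plane counting, and it exhibits an explicit $4$-coloring (in a figure) where you would settle for $6$.
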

We propose the following conjecture.
\begin{conjecture}
For every subcubic bipartite  graph, $\chi_{\rm inj}'(G)\leq 5$. 
 \end{conjecture}
 If the conjecture is true, then $5$ is tight, see for example the following subcubic bipartite graph (Figure \ref{BICUB}) which has an injective chromatic index equal to $5$.

\begin{figure}[ht]
\centering
\begin{pspicture}(0,-2.7971153)(7.5,2.7971153)
\psset{xunit=0.7,yunit=0.7}
\definecolor{colour1}{rgb}{0.6,0.0,0.6}
\definecolor{colour0}{rgb}{0.0,0.0,0.8}
\psline[linecolor=colour1, linewidth=0.04](6.9,-0.5999999)(6.9,1.4000001)
\psline[linecolor=colour0, linewidth=0.04](5.2,2.6000001)(0.5,1.4000001)
\psline[linecolor=green, linewidth=0.04](3.8,-0.5999999)(5.3,-0.5999999)
\psline[linecolor=yellow, linewidth=0.04](0.5,-0.5999999)(3.7,-2.5)(3.7,-2.5)
\psline[linecolor=red, linewidth=0.05](2.1,2.6000001)(2.1,-0.5999999)(2.1,-0.5999999)
\psdots[linecolor=black, dotsize=0.2](2.1,2.6000001)
\psdots[linecolor=black, dotsize=0.2](5.3,2.6000001)
\psdots[linecolor=black, dotsize=0.2](0.5,1.4000001)
\psdots[linecolor=black, dotsize=0.2](6.9,1.4000001)
\psdots[linecolor=black, dotsize=0.2](6.9,-0.5999999)
\psdots[linecolor=black, dotsize=0.2](5.3,-0.5999999)
\psdots[linecolor=black, dotsize=0.2](2.1,-0.5999999)
\psdots[linecolor=black, dotsize=0.2](0.5,-0.5999999)
\psdots[linecolor=black, dotsize=0.2](3.7,-0.5999999)
\psdots[linecolor=black, dotsize=0.2](3.7,-2.6)
\psline[linecolor=black, linewidth=0.04](3.7,-2.6)(3.7,-0.5999999)(3.7,-0.5999999)
\psline[linecolor=black, linewidth=0.04](5.3,2.6000001)(5.3,-0.5999999)(5.3,-0.5999999)
\rput[bl](2.9,1.8000001){$a$}
\rput[bl](2.2,1.0000001){$b$}
\rput[bl](4.1,-0.8999999){$c$}
\rput[bl](6.9,0.20000008){$d$}
\rput[bl](1.4,-1.6999999){$e$}
\rput[bl](1.0,2.1000001){$1$}
\rput[bl](4.1,2.4){$1$}
\rput[bl](0.0,0.50000006){$1$}
\rput[bl](6.4,0.6000001){$4$}
\rput[bl](6.0,2.0){$4$}
\rput[bl](5.7,1.3000001){$4$}
\rput[bl](2.1,0.20000008){$2$}
\rput[bl](2.5,-0.4999999){$2$}
\rput[bl](1.1,-0.4999999){$2$}
\rput[bl](5.3,0.100000076){$3$}
\rput[bl](5.6,-0.4999999){$3$}
\rput[bl](4.3,-0.4999999){$3$}
\rput[bl](2.2,-1.6999999){$5$}
\rput[bl](3.7,-1.8){$5$}
\rput[bl](5.2,-1.9){$5$}
\psline[linecolor=black, linewidth=0.04](0.5,1.3000001)(0.5,1.3000001)
\psline[linecolor=black, linewidth=0.04](0.5,1.4000001)(2.1,2.6000001)
\psline[linecolor=black, linewidth=0.04](2.2,2.6000001)(2.2,2.6000001)
\psline[linecolor=black, linewidth=0.04](2.1,2.6000001)(6.9,1.4000001)
\psline[linecolor=black, linewidth=0.04](5.3,2.6000001)(6.9,1.4000001)
\psline[linecolor=black, linewidth=0.04](0.5,1.4000001)(0.5,1.4000001)
\psline[linecolor=black, linewidth=0.04](0.5,1.4000001)(0.5,-0.5999999)(0.5,-0.5999999)
\psline[linecolor=black, linewidth=0.04](0.5,-0.5999999)(2.1,-0.5999999)(2.1,-0.5999999)
\psline[linecolor=black, linewidth=0.04](2.1,-0.5999999)(2.1,-0.5999999)
\psline[linecolor=black, linewidth=0.04](2.1,-0.5999999)(3.7,-0.5999999)(3.7,-0.5999999)
\psline[linecolor=black, linewidth=0.04](3.7,-0.5999999)(3.7,-0.5999999)
\psline[linecolor=black, linewidth=0.04](5.4,-0.5999999)(6.9,-0.5999999)(6.9,-0.4999999)
\psline[linecolor=black, linewidth=0.04](3.7,-2.5)(6.9,-0.5999999)(6.9,-0.5999999)
\end{pspicture}

\caption{A subcubic bipartite graph with an injective chromatic index equal to 5.}
\label{BICUB}

\end{figure}
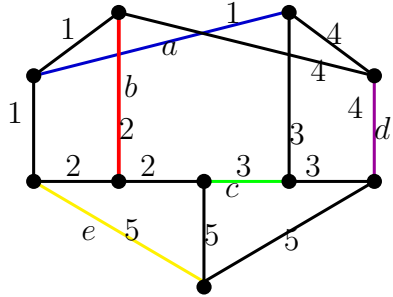
It is easy to see that we cannot do better than $5$ because in an injective edge-coloring the edges $a,b,c,d,e$ must have different colors.\\

\subsection*{Bounds for sparse subcubic graphs and subcubic outerplanar graphs}

  We give also  sufficient conditions for the injective chromatic index of a subcubic graph to be at most $4$ and $6$, and $7$ in terms of the maximum
average degree $\textrm{mad}(G)=\textrm{max}\left\{\frac{2|E(H)|}{|V(H)|},\;H \subseteq G\right\}$.

Note that the best possible sufficient condition for $3$ colors is $\textrm{mad}(G)<2$. If $\textrm{mad}(G)<2$ then $G$ is acyclic and by Proposition~\ref{CCCD}, $\chi_{\rm inj}'(G)\leq 3$. On the other hand, the graph $G$ in Figure \ref{FigSasha} has  $\textrm{mad}(G)=2$ and $\chi_{\rm inj}'(G)\geq 4$.

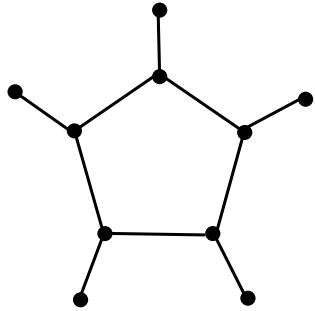
\begin{figure}[H]
\centering
{\begin{pspicture}(0,-2.06)(4.08,2.06)
\psline[linewidth=0.04cm](0.9,0.28)(0.14,0.82)
\psline[linewidth=0.04cm](1.28,-1.06)(0.96,-1.92)
\psline[linewidth=0.04cm](2.76,-1.1)(3.18,-1.94)
\psline[linewidth=0.04cm](3.9,0.76)(3.12,0.34)
\psline[linewidth=0.04cm](2.0,1.9)(2.02,1.04)
\psline[linewidth=0.04cm](1.94,1.04)(0.9,0.32)
\psline[linewidth=0.04cm](0.9,0.32)(1.26,-0.96)
\psline[linewidth=0.04cm](1.36,-1.04)(2.62,-1.06)
\psline[linewidth=0.04cm](2.78,-0.98)(3.14,0.34)
\psline[linewidth=0.04cm](3.08,0.34)(2.04,1.06)
\psdots[dotsize=0.2](2.02,1.04)
\psdots[dotsize=0.2](0.9,0.32)
\psdots[dotsize=0.2](1.3,-1.04)
\psdots[dotsize=0.2](2.72,-1.04)
\psdots[dotsize=0.2](3.14,0.3)
\psdots[dotsize=0.2](2.02,1.92)
\psdots[dotsize=0.2](3.94,0.74)
\psdots[dotsize=0.2](3.18,-1.9)
\psdots[dotsize=0.2](0.98,-1.92)
\psdots[dotsize=0.2](0.12,0.84)
\end{pspicture}}
\caption{A subcubic graph $G$ with ${\textrm mad}(G)=2$ and  $\chi_{\rm inj}'(G)=4$}
\label{FigSasha}
\end{figure}

In 2018, Bu and Qi \cite{BQ} proved:

\begin{theorem}\label{bu}\cite{BQ}
Let $G$ be a subcubic graph.
\begin{enumerate}
\item If ${\textrm mad}(G)<\frac{5}{2}$, then $\chi_{\rm inj}'(G)\leq 5$.
\item  If ${\textrm mad}(G)<\frac{18}{7}$, then $\chi_{\rm inj}'(G)\leq 6$.
\end{enumerate} 
\end{theorem}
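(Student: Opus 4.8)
The plan is to prove both parts by the discharging method applied to a minimal counterexample, treating the two statements in parallel. Fix a target number of colours $t\in\{5,6\}$ together with its threshold $m\in\{5/2,18/7\}$, and suppose $G$ is a subcubic graph with $\mathrm{mad}(G)<m$ but $\chi'_{\rm inj}(G)>t$, chosen so that $|V(G)|+|E(G)|$ is minimum. Then $G$ is connected, and every proper subgraph $H\subsetneq G$ satisfies $\mathrm{mad}(H)\le\mathrm{mad}(G)<m$ and hence, by minimality, admits an injective edge-colouring with $t$ colours. The proof then has two independent halves: a reducibility analysis showing that $G$ avoids a fixed list of small ``light'' configurations, and a discharging analysis showing that no subcubic graph with $\mathrm{mad}(G)<m$ can avoid all of them.

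For the reducibility half I would build a list of configurations that a minimal counterexample cannot contain: vertices of degree $1$, degree-$2$ vertices whose two neighbours are themselves of low degree, short threads of consecutive degree-$2$ vertices, and degree-$3$ vertices incident to too many degree-$2$ vertices. Each is handled by the same template: pass to a proper subgraph $G'$ of $G$ obtained by deleting a few edges and low-degree vertices, colour $G'$ injectively with $t$ colours by minimality, and then extend the colouring to the deleted edges. The extension is feasible because in a subcubic graph each edge $e=uv$ conflicts only with the edges lying two steps away through an edge incident to $u$ or $v$, together with the at most two remaining edges of a triangle on $e$; this is exactly the local count behind the global bound $\chi'_{\rm inj}(G)\le 8$ of Theorem~\ref{Delta}, and near a light configuration it drops strictly below $t$, leaving a free colour on each uncoloured edge.

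For the discharging half I would give each vertex the initial charge $\mu(v)=d(v)-m$, so that the total charge is $\sum_{v}\mu(v)=2|E(G)|-m|V(G)|<0$, using $2|E(G)|/|V(G)|\le\mathrm{mad}(G)<m$. The only vertices with negative charge are those of degree $1$ and $2$; since degree-$1$ vertices have been excluded as reducible, it remains to feed the degree-$2$ vertices. For $t=5$ the arithmetic is clean: a degree-$3$ vertex carries surplus $3-\tfrac52=\tfrac12$ and a degree-$2$ vertex a deficit $2-\tfrac52=-\tfrac12$, so the rule ``each degree-$3$ vertex sends $\tfrac14$ to every adjacent degree-$2$ vertex'' restores a degree-$2$ vertex with two degree-$3$ neighbours exactly to $0$, while the reducible configurations guarantee that no donor is ever asked for more than it owns. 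Checking that every vertex ends with nonnegative charge then yields $\sum_{v}\mu(v)\ge 0$, contradicting the strict inequality above.

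The main obstacle is calibrating the two halves so that they close exactly at the stated thresholds. The list of reducible configurations must be rich enough for the discharging rules to restore nonnegativity everywhere, yet each listed configuration must genuinely be reducible, and the tightest reductions are exactly those in which several mutually conflicting uncoloured edges must be coloured simultaneously; these I would settle by ordering the edges so that each is coloured while at most $t-1$ of its conflicting neighbours are fixed, or by phrasing the final choice as a small list-colouring / system-of-distinct-representatives problem and ruling out the sporadic bad patterns of already-used colours by an explicit recolouring. The six-colour bound is the more delicate of the two: with $m=18/7$ a single degree-$3$ vertex (surplus $3/7$) cannot by itself cover the deficit $4/7$ of a degree-$2$ neighbour, so the clean nearest-neighbour rule of the five-colour case no longer suffices and one is forced into a finer list of configurations and a discharging scheme that moves charge across longer distances.
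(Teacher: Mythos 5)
First, a point of context: the paper itself does not prove Theorem~\ref{bu} at all; it is quoted from Bu and Qi \cite{BQ}, and the closest thing in the paper is its own Theorem~\ref{submad}, which is proved with exactly the framework you describe (minimal counterexample, reducible configurations, then a counting/discharging contradiction with the $\mathrm{mad}$ hypothesis). So your strategy is the right one in outline, and your minimality setup (subgraphs inherit $\mathrm{mad}(H)\le \mathrm{mad}(G)<m$, hence are $t$-colourable) is sound.

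However, what you have written is a template, not a proof, and the gap sits exactly where the mathematical content lies. You never fix the list of reducible configurations, never carry out a single extension argument showing that a configuration is reducible with $5$ (resp.\ $6$) colours, and never verify that the discharging closes at the stated thresholds; you concede this yourself (``the main obstacle is calibrating the two halves''). Moreover, the one concrete rule you do state, for part 1, is inconsistent as written: with $\mu(v)=d(v)-\frac{5}{2}$, a $3$-vertex sending $\frac{1}{4}$ to each $2$-neighbour ends with $\frac{1}{2}-\frac{3}{4}=-\frac{1}{4}$ if it has three $2$-neighbours, and a $2$-vertex adjacent to another $2$-vertex receives at most $\frac{1}{4}$ and ends with $-\frac{1}{4}$. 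So the rule tacitly presupposes that adjacent $2$-vertices and $3$-vertices with three $2$-neighbours have already been excluded as reducible. Both exclusions are in fact provable with $5$ colours (each deleted edge meets at most four conflicting edges, by the local count you quote from Theorem~\ref{Delta}, and the three edges at a $3$-vertex do not conflict with one another), but neither argument appears in your proposal, and stating the forbidden-configuration list is precisely what makes the counting $n_2\le n_3$, hence $\mathrm{mad}\ge \frac{5}{2}$, go through. For part 2 the gap is wider and you acknowledge it: a surplus of $\frac{3}{7}$ cannot cover a deficit of $\frac{4}{7}$ by nearest-neighbour transfer, so one must forbid and reduce, with $6$ colours, configurations involving $3$-vertices with two or three $2$-neighbours and their second neighbourhoods (compare the paper's Lemma~\ref{lem6cou}, which does this at the easier threshold $\frac{8}{3}$); identifying that finer list, proving each configuration reducible, and running the longer-distance discharging is the entire substance of Bu and Qi's proof, and it is absent here. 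As it stands, neither part of the theorem is proved.
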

 We improve the result of Theorem \ref{bu}.2 and we show that
\begin{theorem}\label{submad}
Let $G$ be a subcubic graph.
\begin{enumerate}
\item\label{mad1} If ${\textrm mad}(G)<\frac{7}{3}$, then $\chi_{\rm inj}'(G)\leq 4$.
\item\label{mad2}  If ${\textrm mad}(G)<\frac{8}{3}$, then $\chi_{\rm inj}'(G)\leq 6$.
\item\label{mad3}  If ${\textrm mad}(G)<3$, then $\chi_{\rm inj}'(G)\leq 7$.
\end{enumerate} 
\end{theorem}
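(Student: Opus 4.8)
The plan is to prove all three parts by the discharging method, the standard technique for $\mathrm{mad}$-type results on subcubic graphs. For each part I would argue by contradiction: suppose $G$ is a minimal counterexample (minimizing $|V(G)|+|E(G)|$) to the statement that $\mathrm{mad}(G)<\frac{7}{3}$ (resp. $\frac{8}{3}$, $3$) forces an injective edge-coloring with $4$ (resp. $6$, $7$) colors. Minimality gives that every proper subgraph is colorable with the target number of colors, and I would exploit this to rule out certain local configurations (\emph{reducible configurations}). I would first observe that $G$ is connected and has $\delta(G)\ge 2$ (a vertex of degree $1$ or $0$ is trivially reducible: delete the pendant edge, extend the coloring, since a single edge has only boundedly many edges at distance $2$).

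The first main step is a forbidden-configurations lemma: I would enumerate small structures — such as adjacent low-degree vertices, short paths of degree-$2$ vertices, specific arrangements of $2$- and $3$-vertices — and show each cannot appear in $G$. The key counting principle is that when we delete an edge or contract/suppress a degree-$2$ vertex, the set of edges that constrain a given edge $e$ (those at distance $2$ from $e$, i.e.\ sharing a neighbor-edge, plus triangle-mates) is controlled by the degrees of the two endpoints of $e$; for a subcubic graph this number is at most $2(\Delta-1)^2=8$ in the worst case, but for the sparse configurations it drops well below the palette size, leaving a free color. The thresholds $\frac{7}{3},\frac{8}{3},3$ are tuned precisely so that the discharging barrier matches the palette sizes $4,6,7$, so each part needs its own reducibility list, with the $\frac{7}{3}$/$4$-color case demanding the richest (and most restrictive) set of forbidden configurations.

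The second main step is the discharging argument proper. I would assign to each vertex $v$ the initial charge $d(v)$, so the total charge is $\sum_v d(v)=2|E(G)|$, and the hypothesis $\mathrm{mad}(G)<\mu$ (with $\mu\in\{\frac73,\frac83,3\}$) gives $\sum_v d(v)<\mu\,|V(G)|$, i.e.\ the average charge is below $\mu$. Then I would design discharging rules — typically $3$-vertices sending a fixed fraction of charge to nearby $2$-vertices along paths and threads — so that, using the reducibility lemma to know which local configurations survive, every vertex ends with charge at least $\mu$. This yields $\mu\,|V(G)|\le\sum_v \mathrm{ch}^*(v)=\sum_v d(v)<\mu\,|V(G)|$, a contradiction, completing each case.

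The hard part will be the reducibility analysis for part~\ref{mad1}, where only $4$ colors are available: with so small a palette, extending a coloring after removing a configuration leaves very little slack, so I expect to need fairly intricate configurations (not just isolated low-degree vertices but specific interactions between them) and careful case analysis on how the partial coloring can be permuted to free a color for the deleted edges. Designing a discharging scheme whose rules are simultaneously strong enough to force the final charge up to $\frac73$ everywhere, yet only invoke configurations already proven reducible, is the delicate balancing act; parts~\ref{mad2} and~\ref{mad3} should follow the same template with more generous palettes and correspondingly looser configurations, so the bulk of the technical effort — and the risk of an overlooked non-reducible configuration — lies in the $4$-color case.
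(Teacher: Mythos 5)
Your overall template (minimal counterexample, reducible configurations, then a global counting/discharging step) is the paper's strategy too -- the paper just replaces explicit discharging by a direct count of $2$-vertices versus $3$-vertices, which is equivalent. The problem is that the one concrete reduction you commit to is false in the hardest case. You assert that $\delta(G)\ge 2$ may be assumed in all three parts because a pendant edge ``has only boundedly many edges at distance $2$.'' Count them: for a pendant edge $uv$ with $d(u)=1$, $d(v)=3$, the edges at distance exactly two from $uv$ are the edges incident to the two other neighbors of $v$, up to $2+2=4$ of them. That bound equals the palette size in part~\ref{mad1}, so no free color is guaranteed and the extension can fail; the reduction is only valid for the $6$- and $7$-color parts. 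This is not a technicality: the paper cannot delete $1$-vertices in the $4$-color case either. It proves only that \emph{weak} $2$-vertices and $3$-vertices with two pendant neighbors are reducible, then works in the graph $H^\ast$ obtained from the minimal counterexample $H$ by deleting its $1$-vertices, and every later reducibility argument and the final $n_2$/$n_3$ count must carry along the possible pendant neighbors (the primed vertices $s'$) of the configuration's vertices. As written, your plan collapses at its first step for part~\ref{mad1}.

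Beyond this, the proposal defers all of the actual mathematical content: no reducible configuration is exhibited for any part, none is verified, and no discharging rules are given, so there is nothing to check against the mad hypothesis. Worse, your guiding principle -- that for sparse configurations the number of constraining edges ``drops well below the palette size, leaving a free color'' -- is not sufficient for part~\ref{mad1}: in the paper's treatment of the path $xuvwy$ whose three internal vertices have degree $2$, greedy counting does not suffice and one must uncolor and permute colors on already-colored edges to free a color for the remaining ones. The paper's configuration list for part~\ref{mad1} (no triangle with two $2$-vertices, no $4$-cycle with three $2$-vertices, no path of three internal $2$-vertices, no $3$-vertex adjacent to three $2$-vertices two of which have $2$-neighbors) together with those recoloring arguments is where the real work lies. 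Finally, note that part~\ref{mad3} needs no discharging at all: once $1$- and $2$-vertices are shown reducible with $7$ colors, the minimal counterexample is $3$-regular, so $\mathrm{mad}=3$, an immediate contradiction.
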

We have to notice that the existing bounds for the strong edge-coloring of subcubic graphs proved in \cite{HOC13}, among other results, are the following:
\begin{theorem}
Let $G$ be a subcubic graph.
\begin{enumerate}
\item If ${\textrm mad}(G)<\frac{20}{7}$, then $\chi_{s}'(G)\leq 9$.
\item If ${\textrm mad}(G)<\frac{8}{3}$, then $\chi_{s}'(G)\leq 8$.

\item If ${\textrm mad}(G)<\frac{5}{2}$, then $\chi_{s}'(G)\leq 7$.

\item\ If ${\textrm mad}(G)<\frac{7}{3}$, then $\chi_{s}'(G)\leq 6$.
\end{enumerate} 
\end{theorem}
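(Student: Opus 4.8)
The plan is to prove each of the four implications by the discharging method applied to a minimal counterexample, following the now-standard approach to sparse strong edge-colouring. A strong edge-colouring of $G$ with $k$ colours is the same thing as a proper $k$-colouring of the square of the line graph $L(G)$, so the obstruction to extending a partial colouring to an uncoloured edge $e=uv$ is the set of colours already used on edges at distance at most $2$ from $e$. The first fact I would record is a crude count: in a subcubic graph such an edge $e$ has at most $(d(u)-1)+(d(v)-1)\le 4$ neighbouring edges and at most four further edges behind each of its two endpoints, so at most $12$ edges lie within distance $2$ of $e$, and this number drops sharply as soon as $u$ or $v$ has degree $1$ or $2$. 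This is what makes low-degree vertices the engine of every reducibility argument, and it is also why the statements get harder as the palette shrinks from $9$ down to $6$.

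Fix one of the four statements, with threshold $t\in\{\tfrac{20}{7},\tfrac{8}{3},\tfrac{5}{2},\tfrac{7}{3}\}$ and palette size $k\in\{9,8,7,6\}$, and let $G$ be a counterexample minimising $|V(G)|+|E(G)|$; then $G$ is connected and $\delta(G)\ge 1$. First I would assemble a list of reducible configurations, built around pendant vertices, degree-$2$ vertices, and threads (maximal paths of degree-$2$ vertices), together with a few patterns describing a degree-$3$ vertex carrying several light neighbours. For each configuration I would delete a pendant edge, or shorten a thread, colour the strictly smaller graph with $k$ colours by minimality, and then show that every erased edge sees fewer than $k$ forbidden colours among its distance-$\le 2$ neighbourhood, so the colouring extends. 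This contradicts the choice of $G$ and proves that none of the listed configurations occurs.

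The discharging step then converts this structural information into a density bound. I would assign to each vertex the initial charge $\mu(v)=d(v)-t$; the hypothesis $\mathrm{mad}(G)<t$ gives $\sum_{v}\mu(v)=2|E(G)|-t|V(G)|<0$. Degree-$3$ vertices carry positive charge $3-t$ while degree-$1$ and degree-$2$ vertices are in deficit, so the rules would send charge from degree-$3$ vertices across edges and along threads to their light neighbours, the exact amounts being tuned to the value of $t$. Using the excluded configurations, I would verify that after discharging every vertex satisfies $\mu^\ast(v)\ge 0$, whence $\sum_v\mu^\ast(v)\ge 0$, contradicting $\sum_v\mu^\ast(v)=\sum_v\mu(v)<0$. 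Carrying this out separately for the four pairs $(t,k)$ completes the proof; as a sanity anchor, all four bounds sit below Andersen's global bound $\chi'_s(G)\le 10$ for subcubic graphs coming from the case $\Delta=3$ of the Erd\H os--Ne\v set\v ril conjecture \cite{A}.

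The main obstacle is the reducibility analysis rather than the discharging. Because strong colouring forbids colours at distance up to $2$ (not merely the distance-exactly-$2$ and triangle conflicts of injective colouring), the conflict sets are larger and the counting that shows an erased edge has a free colour is correspondingly tighter; near the extreme statement $k=6$, $t=\tfrac{7}{3}$ the palette barely exceeds the local conflict count, so the reducible configurations must be chosen with great care and the order in which the erased edges are re-coloured matters. I expect that identifying a family of configurations rich enough to force a positive final charge everywhere, yet each genuinely reducible with only six colours, will be the crux, with the remaining three statements following the same template under progressively looser constraints.
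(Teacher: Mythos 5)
You should first be aware that the paper you are being compared against contains no proof of this statement at all: the theorem is imported verbatim from Hocquard, Montassier, Raspaud and Valicov \cite{HOC13} and is quoted only to contrast the strong chromatic index bounds with the injective bounds of Theorem \ref{submad}. So the only meaningful benchmark is the proof in \cite{HOC13} itself, and there your instincts are sound: that paper does argue via a counterexample minimizing $|V(G)|+|E(G)|$, a list of reducible configurations organized around $1$-vertices, $2$-vertices and threads of $2$-vertices attached to $3$-vertices, and a discharging phase; your initial charge $\mu(v)=d(v)-t$ with $\sum_v \mu(v)=2|E(G)|-t|V(G)|<0$ is exactly the standard setup used there. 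Your crude conflict count (at most $12$ edges within distance two of a given edge in a subcubic graph, consistent with Andersen's bound \cite{A}) is also correct.

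The genuine gap is that everything which constitutes the proof is missing, and you say so yourself when you call the reducibility analysis ``the crux.'' You do not exhibit a single concrete reducible configuration for any of the four pairs $(t,k)$, you do not carry out the counting showing that each erased edge sees at most $k-1$ forbidden colours, and you state no discharging rules nor verify $\mu^\ast(v)\ge 0$ for any vertex type. This matters because for the tight cases the naive greedy extension you gesture at genuinely fails: with $k=6$ or $k=7$ an uncoloured edge can see up to $12$ coloured edges at distance at most two, so reducibility in \cite{HOC13} requires carefully chosen multi-edge configurations, a specific order of recolouring, and arguments that \emph{modify} the partial colouring (permuting or recolouring already-coloured edges) rather than merely extend it — precisely the kind of case analysis that the injective analogue in Section 6 of the present paper has to do even with its smaller conflict sets. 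Moreover the four statements need four \emph{different} configuration lists and four different rule sets; ``tuned to the value of $t$'' is a placeholder, not an argument. As written, your proposal is a correct description of the method of \cite{HOC13}, but it establishes none of the four implications; turning it into a proof would require essentially reconstructing the technical body of that paper.
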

Since for any planar graph $G$ with girth at least $g$, we have $\text{mad}(G) <\frac{2g}{g-2}$, the following corollary can
be easily derived from Theorem \ref{submad} and Theorem \ref{bu}.2:
\begin{corollary}
Let $G$ be a subcubic planar graph with girth $g$.
\begin{enumerate}
\item If $g\geq16$, then $\chi_{\rm inj}'(G)\leq 4$.

\item If $g\geq10$, then $\chi_{\rm inj}'(G)\leq 5$ \cite{BQ}.

\item If $g\geq 8$, then $\chi_{\rm inj}'(G)\leq 6$.
\item If $g\geq 6$, then $\chi_{\rm inj}'(G)\leq 7$.
\end{enumerate} 
\end{corollary}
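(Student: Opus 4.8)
The plan is to obtain this corollary as a routine consequence of the maximum-average-degree results (Theorem~\ref{submad} and Theorem~\ref{bu}) via the classical Euler-formula bound linking girth and maximum average degree on planar graphs. Concretely, I would first establish the auxiliary inequality $\mathrm{mad}(G)<\frac{2g}{g-2}$ for every planar graph $G$ of girth at least $g$, then note that $\frac{2g}{g-2}$ is strictly decreasing in $g$, and finally, for each of the four girth thresholds, check that the relevant mad-threshold appearing in Theorem~\ref{submad} or Theorem~\ref{bu} dominates $\frac{2g}{g-2}$, so that the corresponding coloring bound applies.

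For the auxiliary inequality I would work with an arbitrary subgraph $H\subseteq G$. Since $\frac{2|E(H)|}{|V(H)|}$ is a weighted average (with weights $|V(H_i)|$) of the ratios $\frac{2|E(H_i)|}{|V(H_i)|}$ over the connected components $H_i$ of $H$, it is bounded by the largest such ratio, so it suffices to bound each connected component. A single-vertex component contributes ratio $0$; a tree component on at least two vertices has $|E|=|V|-1$ and hence ratio $<2$; and a component containing a cycle is a connected planar graph of girth at least $g$, so Euler's formula $|V|-|E|+|F|=2$ together with $2|E|\ge g|F|$ (each face is bounded by at least $g$ edges) gives $|E|\cdot\frac{g-2}{g}\le|V|-2<|V|$, i.e. $\frac{2|E|}{|V|}<\frac{2g}{g-2}$. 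As $\frac{2g}{g-2}=2+\frac{4}{g-2}>2$, all three cases fall strictly below $\frac{2g}{g-2}$, and taking the maximum over the finitely many subgraphs $H$ yields $\mathrm{mad}(G)<\frac{2g}{g-2}$.

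With this bound available, each item is a one-line threshold check. For $g\ge 16$ one has $\frac{2g}{g-2}\le\frac{16}{7}<\frac{7}{3}$, so $\mathrm{mad}(G)<\frac{7}{3}$ and Theorem~\ref{submad}.\ref{mad1} gives $\chi_{\rm inj}'(G)\le 4$. For $g\ge 10$, $\frac{2g}{g-2}\le\frac{5}{2}$, so $\mathrm{mad}(G)<\frac{5}{2}$ and Theorem~\ref{bu}.1 gives $\chi_{\rm inj}'(G)\le 5$. For $g\ge 8$, $\frac{2g}{g-2}\le\frac{8}{3}$, so $\mathrm{mad}(G)<\frac{8}{3}$ and Theorem~\ref{submad}.\ref{mad2} gives $\chi_{\rm inj}'(G)\le 6$. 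For $g\ge 6$, $\frac{2g}{g-2}\le 3$, so $\mathrm{mad}(G)<3$ and Theorem~\ref{submad}.\ref{mad3} gives $\chi_{\rm inj}'(G)\le 7$. Each threshold comparison is verified by cross-multiplying, e.g. $\frac{2g}{g-2}\le\frac{7}{3}\iff g\ge 14$, which is implied by the stated hypothesis $g\ge 16$.

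There is essentially no obstacle internal to this corollary: all of the mathematical content lives in Theorem~\ref{submad} (established by discharging) and in Theorem~\ref{bu} of Bu and Qi, while the corollary is merely the bridge from the planarity-plus-girth hypothesis to a maximum-average-degree hypothesis. The only points demanding care are keeping the inequalities strict in the right places (the Euler bound is strict thanks to the ``$-2$'', which is exactly what turns a non-strict comparison such as $\frac{2\cdot 8}{6}=\frac{8}{3}$ into the strict hypothesis $\mathrm{mad}(G)<\frac{8}{3}$) and, for each girth, invoking the sharpest available theorem; in particular item~3 at $g=8$ must use Theorem~\ref{submad}.\ref{mad2} rather than Theorem~\ref{bu}.2, since $\frac{8}{3}>\frac{18}{7}$.
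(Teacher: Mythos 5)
Your proposal is correct and follows exactly the paper's route: the paper derives this corollary in one line from the folklore inequality $\mathrm{mad}(G)<\frac{2g}{g-2}$ for planar graphs of girth at least $g$, combined with Theorem~\ref{submad} and Theorem~\ref{bu} (your Euler-formula argument just spells out the folklore bound that the paper asserts without proof, and your per-item threshold checks, including using Theorem~\ref{bu}.1 for item~2 and Theorem~\ref{submad}.\ref{mad2} rather than Theorem~\ref{bu}.2 for item~3, match the intended derivation). Your side observation that $g\geq 14$ already suffices for item~1 is consistent with the paper's stated (slightly conservative) hypothesis $g\geq 16$.
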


 We have also studied the injective chromatic index of subcubic outerplanar graphs. 
 An outerplanar graph is a graph with a planar drawing for which all vertices belong to the outer face of the drawing. 
 
 We prove the following result:

\begin{theorem}\label{AS4-1}
If $G$ is an outerplanar graph with maximum degree $\Delta(G)=3$, then $\chi_{\rm inj}'(G)\leq 5$. Moreover, the bound is tight.
\end{theorem}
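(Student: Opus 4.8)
The plan is to argue by induction on $|E(G)|$, i.e. to consider a minimal counterexample $G$ and exhibit a reducible configuration, using the structural description of outerplanar graphs given by their weak dual (the tree whose nodes are the inner faces). First I would fix the minimum degree. A vertex $v$ of degree $1$ with incident edge $vu$ is removable: the colours forbidden on $vu$ are exactly those on edges incident to the at most two neighbours of $u$ other than $u$, hence at most $2\times 2=4$ colours, so a fifth is always free and any injective $5$-edge-coloring of $G-v$ extends. Cut vertices are handled next: if $v$ separates $G$ into $G_1$ and $G_2$, then the only coupling across $v$ is that an edge of $G_1$ at $v$ must avoid the colours of the edges of $G_2$ incident to the neighbours of $v$ (and symmetrically); this is a bounded system of constraints, met by applying a suitable permutation to the (interchangeable) $5$-colour palette of one block before merging the two colorings. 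Thus a minimal counterexample is $2$-connected with $\delta(G)\ge 2$.

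For the $2$-connected case I would invoke the standard structure: the outer face is a Hamilton cycle, every other edge is a non-crossing chord, and the weak dual is a tree. Choosing a leaf of this tree gives an inner face bounded by a single chord $xy$ together with a path $x,v_1,\dots,v_k,y$ on the outer cycle whose internal vertices $v_1,\dots,v_k$ have degree $2$ in $G$; since $\Delta(G)\le 3$, each of $x,y$ is incident to the chord, to one ear edge, and to at most one further edge leaving the ear. Deleting $v_1,\dots,v_k$, colouring the remainder by induction, and then colouring the ear edges greedily is the core reduction. For $k\ge 2$ the ear is long enough that every edge to be coloured sees at most four already-used colours, so a free colour always remains and the extension succeeds.

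The main obstacle is the short ear $k=1$: a triangle $xv_1y$ with $v_1$ of degree $2$ and both $x,y$ of degree $3$. Here the triangle condition forces $c(xv_1)\ne c(v_1y)$, each of these two edges must avoid up to four previously used colours, and a naive count admits the bad case where both have the same unique remaining colour. This is precisely the obstruction that prevents pushing the bound below $5$, so it cannot be declared reducible outright; instead I expect to finish by a global argument along the weak-dual tree. Concretely, I would enlarge the reduction to absorb the neighbouring faces of the chosen leaf and track the degree-$2$ vertices that outerplanarity forces to appear nearby (the Hamilton cycle cannot consist solely of chord endpoints), using the colouring freedom at such vertices to recolour and thereby rule out a graph in which every leaf ear is simultaneously "bad". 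Equivalently, this bookkeeping can be packaged as a discharging argument: assign to each vertex and inner face an initial charge, let the short-ear triangles send charge toward the degree-$2$ vertices guaranteed elsewhere, and derive a contradiction with the outerplanar charge total. This charge-balancing over the weak-dual tree is where the real work lies.

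Finally, for tightness I would exhibit an explicit subcubic outerplanar graph with $\chi_{\rm inj}'(G)=5$, built from a triangle whose degree-$3$ vertices are attached to short forcing gadgets so that five edges are pairwise at distance two or share a triangle, and verify by hand that four colours are impossible while five suffice.
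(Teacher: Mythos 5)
Your skeleton --- minimal counterexample, reduction to $\delta(G)\ge 2$, locating a leaf face of the weak dual (an ear $x v_1\dots v_k y$ of $2$-vertices behind a chord $xy$), deleting the ear and extending a colouring of the rest --- is essentially the paper's (its Lemma~\ref{MR} supplies exactly this ear), and your reductions for $1$-vertices and long ears are sound. But the proof collapses precisely where you say ``the real work lies'': the triangle ear $k=1$ with $d(x)=d(y)=3$, and your proposed way out cannot work. You plan a discharging/global argument ruling out a counterexample ``in which every leaf ear is simultaneously bad''; however, being bad is not a structural property (it depends on the colouring handed down by induction), and there exist subcubic outerplanar graphs in which \emph{every} leaf face is a triangle whose chord endpoints both have degree $3$ --- already $K_4$ minus an edge, or each block of the paper's own tightness example in Figure~\ref{chi-inj-5}. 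So no charge count over vertices and faces can exclude this configuration; it must be made \emph{reducible}, which requires control over the inherited colouring. That control is the idea you are missing, and it is what the paper supplies by proving a stronger statement (Theorem~\ref{AS4-2}): $G$ has an injective $5$-edge-colouring in which every simple path of length three (both internal vertices of degree $2$) carries exactly two colours. In the triangle case one colours $H=G\setminus\{v_1\}$, where $x$ and $y$ now have degree $2$; then $x_1xyy_1$ is such a path, injectivity gives $\phi(x_1x)\neq\phi(yy_1)$, so the invariant forces, say, $\phi(xy)=\phi(xx_1)$, and with this coincidence each of $xv_1$, $yv_1$ has at most four forbidden colours, so the extension succeeds; the doubled colour patterns $\lambda\lambda\alpha\alpha\cdots$ used on longer ears in the paper are exactly what preserves the invariant through the induction. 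Without strengthening the induction hypothesis in some such way, your induction does not close, so the proposal has a genuine gap at its central point.

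Two smaller problems. First, for $k\ge 3$ your claim that every ear edge sees at most four used colours is false for the natural left-to-right order: the last edge $v_ky$ sees $v_{k-2}v_{k-1}$, the two edges at $x$ other than $xy$, and up to two edges at the third neighbour of $y$ --- five in all; the greedy extension works only if you colour $xv_1$ and $v_ky$ before the interior edges. Second, the cut-vertex merge by permuting one block's palette can fail outright: with $d_{G_1}(v)=1$ and $d_{G_2}(v)=2$, if the four $G_2$-edges at distance one from $v$ use three colours, the two $G_2$-edges at $v$ use the remaining two, and the colour of the $G_1$-edge at $v$ also appears on an edge at its other endpoint (which injectivity permits, as those edges are merely adjacent), then no permutation of either palette satisfies all distance-two constraints; one would have to recolour a block, not permute it. The paper sidesteps block decompositions entirely, since Lemma~\ref{MR} produces the ear in any outerplanar graph with $\delta\ge2$, and it handles separately (Lemma~\ref{Ci-Cj}) the degenerate case where deleting the ear leaves a graph of maximum degree~$2$.
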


\begin{figure}[!h]
\centering
\begin{pspicture}(0,-1.028125)(6.7628126,1.028125)
\psline[linewidth=0.04cm](2.7809374,0.0096875)(3.7009375,-0.0103125)
\psline[linewidth=0.04cm](3.7009375,-0.0103125)(4.3209376,0.5496875)
\psline[linewidth=0.04cm](3.7009375,0.0096875)(4.3209376,-0.6503125)
\psline[linewidth=0.04cm](4.3409376,0.5496875)(4.3209376,-0.6503125)
\psline[linewidth=0.04cm](4.3209376,0.5496875)(5.3009377,0.5496875)
\psline[linewidth=0.04cm](4.3409376,-0.6303125)(5.3009377,-0.6303125)
\psline[linewidth=0.04cm](5.3009377,0.5496875)(5.2809377,-0.6103125)
\psline[linewidth=0.04cm](5.3209376,0.5496875)(5.9409375,-0.0303125)
\psline[linewidth=0.04cm](5.9209375,-0.0303125)(5.3009377,-0.6303125)
\psline[linewidth=0.04cm](0.5609375,0.0096875)(1.1809375,0.5696875)
\psline[linewidth=0.04cm](0.5609375,0.0296875)(1.1809375,-0.6303125)
\psline[linewidth=0.04cm](1.2009375,0.5696875)(1.1809375,-0.6303125)
\psline[linewidth=0.04cm](1.1809375,0.5696875)(2.1609375,0.5696875)
\psline[linewidth=0.04cm](1.2009375,-0.6103125)(2.1609375,-0.6103125)
\psline[linewidth=0.04cm](2.1609375,0.5696875)(2.1409376,-0.5903125)
\psline[linewidth=0.04cm](2.1809375,0.5696875)(2.8009374,-0.0103125)
\psline[linewidth=0.04cm](2.7809374,-0.0103125)(2.1609375,-0.6103125)
\psdots[dotsize=0.18](2.7809374,0.0096875)
\psdots[dotsize=0.18](3.7809374,-0.0103125)
\psdots[dotsize=0.18](4.3409376,0.5296875)
\psdots[dotsize=0.18](5.3009377,0.5296875)
\psdots[dotsize=0.18](5.9009376,-0.0303125)
\psdots[dotsize=0.18](5.3009377,-0.6103125)
\psdots[dotsize=0.18](4.3409376,-0.6103125)
\psdots[dotsize=0.18](2.1809375,0.5496875)
\psdots[dotsize=0.18](2.1409376,-0.6103125)
\psdots[dotsize=0.18](1.1809375,-0.5903125)
\psdots[dotsize=0.18](1.2209375,0.5696875)
\psdots[dotsize=0.18](0.6409375,0.0096875)
\usefont{T1}{ptm}{m}{n}
\rput(2.8523438,0.2196875){$u$}
\usefont{T1}{ptm}{m}{n}
\rput(3.6923437,0.1996875){$v$}
\usefont{T1}{ptm}{m}{n}
\rput(2.3523438,0.7796875){$u_1$}
\usefont{T1}{ptm}{m}{n}
\rput(2.3323438,-0.7603125){$u_2$}
\usefont{T1}{ptm}{m}{n}
\rput(4.492344,0.7996875){$v_1$}
\usefont{T1}{ptm}{m}{n}
\rput(4.3923435,-0.7603125){$v_2$}
\usefont{T1}{ptm}{m}{n}
\rput(1.3323437,-0.8003125){$u_3$}
\usefont{T1}{ptm}{m}{n}
\rput(0.41234374,0.1796875){$u_4$}
\usefont{T1}{ptm}{m}{n}
\rput(1.2923437,0.8396875){$u_5$}
\usefont{T1}{ptm}{m}{n}
\rput(5.472344,-0.8003125){$v_3$}
\usefont{T1}{ptm}{m}{n}
\rput(6.2723436,-0.1203125){$v_4$}
\usefont{T1}{ptm}{m}{n}
\rput(5.6523438,0.7396875){$v_5$}
\end{pspicture} 
\caption{An outerplanar graph $G$ with $\Delta(G)=3$ and $\chi_{\rm inj}'(G)=5$}\label{chi-inj-5}
\end{figure}
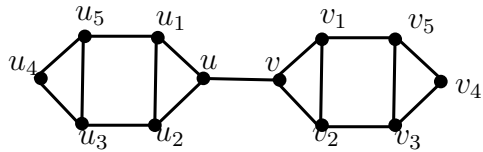
The graph $G$ depicted in Figure \ref{chi-inj-5} is an outerplanar graph with maximum degree 3 and $\chi'_{\rm {inj}}(G)=5$. It shows that the result of Theorem \ref{AS4-1} is tight.

 Indeed, since each pair of the set $\{uu_1,uu_2,vv_1,vv_2\}$ is either in a triangle or at distance two, in every injective edge-coloring $\phi$ of $G$, they must have four different colors. Let us then use colors 1,2,3,  and 4 to color respectively $uu_1$, $uu_2$, $vv_1$, and $vv_2$. However, four colors are not enough to color $G$, because if we have only  4 colors, then by symmetry, w.l.o.g., we may assume that $\phi(uv)=3$. Hence, $\phi(u_1u_2)=4$. In this case, we have to set, in the order, $\phi(u_3u_5)=3$, $\phi(u_4u_5)=2$, $\phi(u_3u_4)=1$, and $\phi(u_1u_5)=4$. Thus, no color can be given to $u_3u_2$. Consequently, $\chi_{\rm inj}'(G)\geq 5$. It is easy to find an injective edge-coloring of $G$ with 5 colors. 
 
\vspace{1cm}
The structure of the paper is as follows. In the next section we introduce some notation. In Section 3,  we prove Theorem \ref{AS2} and Theorem \ref{Reverse}. In Section 4, we prove Theorem 1.9. In Section 5, we prove Proposition 1.12 and Theorem \ref{cubHea}. In Section 6, we prove Theorem \ref{submad}. In Section 7, we prove Theorem \ref{AS4-1}. The last section is devoted to open problems. 
\section{Preliminaries} 

 Let $G$ be a graph, a vertex of degree $k$ is called a $k$-vertex, and a $k$-neighbor of a vertex $v$ is a $k$-vertex adjacent to $v$. We say a graph $G$ is subcubic if $\Delta(G)\leq 3$. The diameter of graph $G$ is the maximum distance between the pair of vertices.\\
Let $G$ be a graph, a $k$-injective edge-coloring of $G$ is an injective edge-coloring using the colors $\{1,\ldots,k\}$. \\

\section{Proof of Theorem \ref{AS2} and Theorem \ref{Reverse} }
In this section we prove:
\begin{TheoNonNum}
Let G be a graph with $\chi_{st}(G) = k$. Then $\chi_{\rm inj}'(G)\leq \frac{k(k-1)}{2}$.
\end{TheoNonNum}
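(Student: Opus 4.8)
The plan is to construct the injective edge-coloring directly from a star vertex-coloring, by coloring each edge with the unordered pair of colors of its two endpoints. Concretely, I would fix an optimal star coloring $\phi : V(G) \to \{1, \ldots, k\}$ and define an edge-coloring $c$ by $c(uv) = \{\phi(u), \phi(v)\}$. Since a star coloring is in particular proper, we have $\phi(u) \neq \phi(v)$ for every edge $uv$, so each edge genuinely receives a $2$-element subset of $\{1, \ldots, k\}$. There are exactly $\frac{k(k-1)}{2}$ such subsets, which accounts for the claimed number of colors; it then remains only to check that $c$ is injective in the required sense.

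To verify the defining property I would show that $c(e_1) \neq c(e_3)$ whenever $e_1, e_2, e_3$ are three consecutive edges, splitting into the two cases of the definition. The triangle case is immediate: if $e_1, e_2, e_3$ form a cycle of length three, its three pairwise-adjacent vertices carry three distinct colors $a, b, c$ by properness of $\phi$, so the three edges receive the three distinct pairs $\{a,b\}$, $\{b,c\}$, $\{c,a\}$, and in particular no two of them coincide.

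The substantive case is the path $v_0 v_1 v_2 v_3$, with $e_1 = v_0 v_1$, $e_2 = v_1 v_2$, $e_3 = v_2 v_3$ (the four vertices being distinct, else we are in the triangle case). Suppose for contradiction that $c(e_1) = c(e_3)$, i.e. $\{\phi(v_0), \phi(v_1)\} = \{\phi(v_2), \phi(v_3)\}$. Setting $a = \phi(v_1)$ and $b = \phi(v_2)$, which are distinct by properness, this equality forces $\phi(v_0) = b$ and $\phi(v_3) = a$. Hence all four vertices are colored from $\{a, b\}$, so the subgraph induced by the color classes $a$ and $b$ contains the path $v_0 v_1 v_2 v_3$, a path of length three. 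Since no star contains a path of length three, the component of this induced subgraph containing the path is not a star, contradicting the fact that $\phi$ is a star coloring. This settles the path case and completes the verification.

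The step I expect to be the crux, and the only place where the full strength of star coloring (rather than mere properness or acyclicity) is used, is this last contradiction: recognizing that $c(e_1) = c(e_3)$ along a $P_4$ is equivalent to that $P_4$ being bicolored, which is exactly what the star-forest condition forbids because a star admits no path of length three. Everything else is routine bookkeeping, and this observation also makes the argument robust to the presence of chords, since any such bicolored component would contain the forbidden path regardless.
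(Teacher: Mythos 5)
Your proof is correct and is essentially the paper's own argument: coloring each edge $uv$ with the pair $\{\phi(u),\phi(v)\}$ is exactly the paper's scheme of assigning a single color to all edges of each graph $H_{i,j}$ induced by the union of two color classes $i$ and $j$, which is a star forest. The only difference is that you make explicit the verification (the triangle case and the bicolored $P_4$ contradiction) that the paper leaves implicit.
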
 
\begin{proof}
Let $G $ be a graph and $\phi$ a star coloring of $G$ with $k$ colors.\\
 Consider $H_{i,j}$ the graph induced by the union of two color classes $i$ and $j$ in $\phi$, for $i\neq j$ and
$i, j\in\{1, 2,\ldots, k\}$. Note that $H_{i,j}$ is a star forest. We have an injective edge-coloring of $H_{i,j}$ by coloring every edge of $H_{i,j}$ with the same color. After coloring each of the $\frac{k(k-1)}{2}$ graphs $H_{i,j}$, for $i\neq j$
and $i, j \in\{1, 2, \ldots , k\}$, we obtain an injective edge-coloring of G with $\frac{k(k-1)}{2}$ colors.

\end{proof}

We now prove  the reverse direction :

\begin{TheoNonNum}
Let $G$ be a graph with $\chi_{\rm inj}'(G)=k$. Then $\chi_{st}(G)\leq 2^k$.
\end{TheoNonNum}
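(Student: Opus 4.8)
The plan is to build a star vertex-colouring of $G$ directly from the structure of an optimal injective edge-colouring. By the Observation $\chi_{\rm inj}'(G)={\rm isa}(G)$, an injective edge-colouring with $k$ colours is the same data as a family $F_1,\dots,F_k$ of induced star forests covering $E(G)$; equivalently, each colour class of the edge-colouring is an induced subgraph that is a disjoint union of stars. (If one prefers to argue from scratch, this induced--star--forest property is exactly what the distance-two and triangle conditions encode: two like-coloured edges can meet only at a common centre, a monochromatic path on three edges is forbidden, and no edge of $G$ may join the vertex sets of two distinct like-coloured stars.) I would fix, in each $F_i$, a partition of $V(F_i)$ into centres and leaves so that every edge of $F_i$ joins a centre to a leaf (for a one-edge component either endpoint may be declared the centre), and then colour each vertex $v$ by the set
$$\phi(v)=\{\,i : v \text{ is a centre of } F_i\,\}\subseteq\{1,\dots,k\},$$
which uses at most $2^k$ colours.

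Properness is the easy step: every edge $e$ of $G$ lies in some $F_i$, where it separates a centre from a leaf, so $\phi$ differs on the two endpoints of $e$ in coordinate $i$. The substance of the proof is to show that the union of any two colour classes of $\phi$ induces a star forest, i.e.\ that $\phi$ admits no bicoloured path on four vertices (a bicoloured cycle would contain one, so this simultaneously rules out bicoloured cycles). Suppose for contradiction that $a\,b\,c\,d$ is a bicoloured $P_4$ with $\phi(a)=\phi(c)$ and $\phi(b)=\phi(d)$. I would focus on the middle edge $bc$ and choose an index $m$ with $bc\in F_m$; in $F_m$ this edge joins a centre to a leaf, so exactly one of $b,c$ is a centre of $F_m$.

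If $b$ is the centre (so $m\in\phi(b)$), then $m\in\phi(d)$ forces $d$ to be a centre of $F_m$ as well, while $c$ is a leaf of $F_m$; now $c$ and $d$ both lie in $V(F_m)$, so the induced property forces $cd\in F_m$, giving $c$ two incident edges of $F_m$ and hence making $c$ a centre --- contradicting that $c$ is a leaf. The case where $c$ is the centre is symmetric, using $m\in\phi(a)$, the edge $ab$, and the vertex $b$. Either way we reach a contradiction, so no bicoloured $P_4$ exists and $\phi$ is a star colouring with at most $2^k$ colours. I expect the main obstacle to be precisely this last step: one must notice that it is the \emph{induced} nature of the star forests (not merely their being star forests) that manufactures the forbidden degree-two leaf, and one should dispose cleanly of the harmless ambiguity in the centre/leaf labelling of single-edge components, since the argument only ever uses membership in $V(F_m)$ together with the degree jump at the offending vertex.
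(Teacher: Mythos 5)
Your proof is correct, and it takes a genuinely different route from the paper's. The paper does not use the star-forest structure at all: it colours each vertex of degree at least $2$ by the set of colours appearing on its incident edges (all degree-one vertices receive the all-zero vector), and verifies properness and the absence of bicoloured paths on four vertices directly from the distance-two condition --- for a path $uvwx$ with $\psi(u)=\psi(w)$, injectivity of the edge-colouring forces $\phi(uv)=\phi(vw)$, and this common colour then cannot appear on any edge at $x$, so $\psi(v)\neq\psi(x)$. You instead invoke the paper's Observation ($\chi'_{\rm inj}(G)=\mathrm{isa}(G)$) and colour each vertex by the set of indices of the induced star forests in which it is a centre, the contradiction coming from induced-ness forcing an extra edge that gives a leaf degree two. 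Your encoding buys uniformity: pendant vertices and isolated edges need no special treatment, whereas the paper must exclude the isolated-edge case and recolour degree-one vertices to keep its colouring proper. The paper's encoding buys the refinement recorded in its Remark: when $\delta(G)\geq 2$ the all-zero vector is never used, so $2^k-1$ colours suffice; your centre-based colouring has no analogous automatically unused class. One point you should make explicit: your final step (``two incident edges of $F_m$, hence a centre'') needs the canonical labelling in which, in every component with at least two edges, the centre is the unique vertex of degree at least $2$. The weaker requirement ``every edge joins a centre to a leaf'' alone would allow a path $b\,c\,d$ in $F_m$ to be labelled with $b,d$ as centres and $c$ as a leaf, and then no contradiction arises. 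With that convention fixed --- as your parenthetical about one-edge components suggests you intend --- the argument is complete.
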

\begin{proof}
Let $G$ be a graph and $C=\{1,2,\ldots,k\}$ be a set of $k$ colors. We may assume that $G$ is a connected graph and not an isolated edge. If $G$ is an isolated edge then is obviously true.
If $G$ is not a connected graph then we prove it for the connected components of $G$. \\

Assume that $G$ has an injective edge-coloring $\phi$ using $C$.
We define  a vertex coloring $\psi$ of $G$ as follows.
\begin{itemize}
\item For every $v\in V(G)$ and $d_G(v)\geq 2$, let $\psi(v)=(x_1,x_2,\ldots,x_k)$ such that, for every $i\in\{1,\ldots,k\}$,  
$$x_i=\begin{cases} 1 &\text{ if an edge of  color } i  \text{ is incident with the vertex } v;\\
0&\text{ otherwise}.
\end{cases}$$
\item For every $v\in V(G)$ and $d_G(v)=1$, let $\psi(v)=(0,0,\ldots,0)$.
\end{itemize}
 
The coloring $\psi$ is a proper vertex coloring. Indeed,  assume that  $\psi(v)=\psi(u)$, for some  $uv\in E(G)$. If $d_G(u)=1$ then $d_G(v)=1$ by the  definition of the coloring. It implies that $uv $ is an isolated edge, which contradicts the hypothesis. Now we assume that $d_G(u)\geq 2$ and $d_G(v)\geq 2$. Hence there exist two edges $uu_1$ and $vv_1$ in $G$ ($uu_1\neq vv_1$), such that $\phi(uu_1)=\phi(vv_1)$ which contradicts the coloring $\phi$. So it remains to show that there is no 2-colored path with four vertices in $G$.
 By contrary, assume that there exists a 2-colored path with four vertices $uvwx$ in $G$. Then $\psi(u)=\psi(w)$ and by definition of $\phi$, we can deduce that $\phi(uv)=\phi(vw)=\alpha$. In this case, it is easy to see that $\alpha\notin \phi(x)$, where $\phi(x)$ denote the set of colors used on the edges incident with $x$. Hence, $\psi(v)\neq\psi({x})$. We conclude that there is no 2-colored path with four vertices in $G$. Therefore, $\psi$ is a star coloring of $G$. The total number of used colors is at most $2^k$. 

\end{proof}
\begin{remark}
Let $G$ be a graph with minimum degree  at least 2. If $\chi_{\rm inj}'(G)=k$, then $\chi_{st}(G)\leq 2^k-1$.
\end{remark}
Since any vertex is incident with a least two colored edges, $\psi$ is not using the  color $(0,0,\ldots,0)$ ($G$ does not contain vertices of degree 1). The total number of used colors is at most $2^k-1$.
\section{Proof of Theorem 1.9}
An alternate way of looking at the injective chromatic index of a graph $G$ is to consider the graph $G^{(\ast)}$ obtained from $G$ as follows : $V(G^{(\ast)})=E(G)$ and two vertices of $G^{(\ast)}$ are adjacent if the edges of $G$ corresponding to these two vertices of $G^{(\ast)}$ are at distance 2 in $G$ or in a triangle ($G'$ is not exactly the $2$-distance graph of $G$). Then,  
\begin{equation}\label{inj-cubic}
\chi_{\rm inj}'(G)=\chi(G^{(\ast)})
\end{equation}
We recall that, by Proposition \ref{cyclechaine},
if $C_n$ is a cycle with $n$ vertices, then  
$\chi_{\rm inj}(C_n)= 2$ if $ n \equiv 0 \mod{4}$ and  $3$ otherwise.
We prove now the theorem:

\begin{TheoNonNum}\label{Delta}
Let $G$ be a graph with any maximum degree $\Delta(G)\geq 3$. Then $\chi_{\rm inj}'(G)\leq 2(\Delta(G)- 1)^2.$
\end{TheoNonNum}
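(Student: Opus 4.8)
The plan is to work with the reformulation \eqref{inj-cubic}, namely $\chi'_{\rm inj}(G)=\chi(G^{(\ast)})$, and to bound $\chi(G^{(\ast)})$ by first controlling the maximum degree of $G^{(\ast)}$ and then invoking Brooks' theorem to remove the ``$+1$'' that a naive greedy argument would leave. Write $D=2(\Delta(G)-1)^2$. The first step is to show that every vertex of $G^{(\ast)}$, i.e.\ every edge $e=uv$ of $G$, has degree at most $D$ in $G^{(\ast)}$.

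For the degree count, fix $e=uv$. A second edge $f$ is adjacent to $e$ in $G^{(\ast)}$ exactly when $e$ and $f$ are the end-edges of a path of length $3$ or lie in a common triangle. I would say that $f$ is \emph{seen from} $v$ if $f=cd$ with $c\in N(v)\setminus\{u\}$ and $d\in N(c)\setminus\{v\}$, and symmetrically \emph{seen from} $u$. A short check shows every neighbour of $e$ in $G^{(\ast)}$ is seen from $u$ or from $v$: a genuine $P_4$-conflict $u,v,c,d$ is seen from $v$, while a triangle-conflict coming from a triangle $uvc$ is seen from $v$ (the edge $cu$, taking the endpoint $d=u$) and from $u$ (the edge $cv$, taking $d=v$). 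Since the number of edges seen from $v$ is at most $\sum_{c\in N(v)\setminus\{u\}}(d(c)-1)\le(\Delta-1)^2$, and likewise from $u$, we obtain $\deg_{G^{(\ast)}}(e)\le 2(\Delta-1)^2=D$.

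This already gives $\chi(G^{(\ast)})\le D+1$ by greedy colouring, so the crux is to delete the extra colour, and I would do so componentwise via Brooks' theorem: a connected component $H$ of $G^{(\ast)}$ satisfies $\chi(H)\le\Delta(H)\le D$ unless $H$ is a complete graph or an odd cycle. An odd-cycle component has chromatic number $3\le 8\le D$ (as $\Delta\ge 3$), so it is harmless. A complete component $K_m$ has $\chi=m=\Delta(H)+1\le D+1$, which only threatens the bound when $m=D+1$, that is, when $G^{(\ast)}$ contains a component equal to $K_{D+1}$ -- equivalently, when $G^{(\ast)}$ contains $D+1$ pairwise-conflicting edges each of degree exactly $D$.

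The main obstacle is to rule this last case out. Here I would exploit the rigidity of equality in the two-sided count: a $K_{D+1}$ component forces every edge $e=uv$ in it to attain $\deg_{G^{(\ast)}}(e)=D$, which in turn forces $d(u)=d(v)=\Delta$, forces each relevant neighbour to have degree $\Delta$, forces the families of edges seen from $u$ and seen from $v$ to be disjoint, and (since a triangle through $e$ would make these families overlap once $\Delta\ge 3$) forces $e$ to lie in no triangle, so the structure around $e$ is tree-like. In that configuration I would exhibit an edge $f_1$ seen only from $u$ and an edge $f_2$ seen only from $v$ that are neither at distance $2$ nor in a common triangle, hence do not conflict, contradicting that the $D$ neighbours of $e$ form a clique. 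Making this non-conflicting cross pair fully rigorous -- in particular excluding long-range conflicts created elsewhere in $G$ -- is the delicate point, which I would handle by choosing $f_1,f_2$ as ``far'' edges $u_1a$, $v_1b$ with $a,b\notin\{u,v\}$ and checking that any $P_4$ joining them would reuse a vertex the equality analysis has forced to be distinct. With $K_{D+1}$ excluded, Brooks' theorem yields $\chi(G^{(\ast)})\le D=2(\Delta(G)-1)^2$, as required.
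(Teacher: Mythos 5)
Your overall architecture coincides with the paper's proof: the reformulation $\chi'_{\rm inj}(G)=\chi(G^{(\ast)})$, the degree bound $\deg_{G^{(\ast)}}(e)\le 2(\Delta-1)^2=D$, the componentwise application of Brooks' theorem (odd cycles harmless, only a $K_{D+1}$ component threatening), and the equality analysis (all relevant vertices of degree $\Delta$, the two seen-families disjoint, no triangle through $e$, no edge inside $N(u)\setminus\{v\}$, inside $N(v)\setminus\{u\}$, or between these two sets) are all correct and all present, explicitly or implicitly, in the paper. The gap is in your last step, and it is genuine. You propose to exhibit a non-conflicting cross pair $f_1=u_1a$, $f_2=v_1b$ by ``checking that any $P_4$ joining them would reuse a vertex the equality analysis has forced to be distinct.'' That check cannot succeed: one of the four possible middle edges of such a $P_4$ is $ab$ itself, an edge joining two level-two vertices (leaves). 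A leaf-leaf edge is seen from neither $u$ nor $v$, is not double-counted, and is therefore entirely compatible with $\deg_{G^{(\ast)}}(e)=D$; the equality analysis says nothing about it. Worse, under the very hypothesis you are trying to refute, such edges are \emph{forced} to exist: since the neighbours of $e$ form a clique in $G^{(\ast)}$, every pair $u_ia$, $v_jb$ must conflict, and after your (correct) exclusions of $u_iv_j$, $u_ib$, $av_j$ the only remaining mechanism is precisely $ab\in E(G)$. So in the hypothetical $K_{D+1}$ component every cross pair \emph{does} conflict, and no distinctness argument can produce the non-conflicting pair you want.

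The missing idea is to turn this forcing around and count degrees at the leaves, which is exactly what the paper does. In the paper's notation: the edges $x_1z_1,\dots,x_1z_{\Delta-1}$ conflict pairwise and share $x_1$, so each pair must lie in a triangle, forcing $z_iz_j\in E(G)$ for all $i\ne j$; then the conflicts between $x_1z_1$ and $x_2t_1$, $x_2t_2$ can only be realized by the edges $z_1t_1,z_1t_2\in E(G)$, whence $d(z_1)\ge(\Delta-2)+1+2=\Delta+1$, a contradiction. (In your language: the clique condition forces every $u$-side leaf to be adjacent to every $v$-side leaf, giving leaves degree at least $(\Delta-1)^2+1>\Delta$ for $\Delta\ge 3$.) Either way, the contradiction lives in the degree bound of $G$ at the leaves, not in showing directly that some connecting $P_4$ cannot exist; your sketch is missing exactly this step, and the resolution you propose for the ``delicate point'' is, as stated, false.
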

\begin{proof}
Let $G=(V(G), E(G))$ be a graph with maximum degree $\Delta(G)$. Consider the graph $G^{(\ast)}$ defined above.

 Graph $G^{(\ast)}$ has a maximum degree at most $2(\Delta(G)- 1)^2$. We apply Brooks' theorem  to the components of $G^{(\ast)}$. $2(\Delta (G) - 1) ^ 2 $ colors are enough to properly color the vertices of the  components of $G^{(\ast)}$, except the case where  a component $ G^{(\ast)}$ is a complete graph $K_{2(\Delta (G) - 1) ^ 2+1}$ or an odd cycle. If no component of $G^{(\ast)}$ is  a complete graph, then the proof is done. 
If $G^{(\ast)}$ is an odd cycle (or an union of disjoint odd or even cycles), then $G$ is a cycle which is not possible by hypothesis. Now if a component of $G^{(\ast)}$ is a complete graph with degree  $ 2 (\Delta (G) - 1) ^ 2 $, then $G$ has $ 2 (\Delta (G) - 1) ^ 2 +1$ edges which are at distance $2$ from each other. Consider a vertex $s$ of  $K_{2(\Delta (G) - 1) ^ 2+1}$ and $e=uv$ the corresponding edge in $G$, then $e$ has exactly $2 (\Delta (G) - 1) ^ 2$ edges at distance $2$ in $G$ (see Figure \ref{BorneSup}). All these edges are represented by a vertex in  $K_{2(\Delta (G) - 1) ^ 2+1}$. Following the Figure \ref{BorneSup}, the edges $x_1z_i$, $i\in\{1,\ldots, \Delta(G)-1\}$, are represented  in  $K_{2(\Delta (G) - 1) ^ 2+1}$ and they are at distance $2$ from each other or in a triangle in $G$. Hence the set of vertices $z_i$, $i\in\{1,\ldots, \Delta(G)-1\}$ form a  complete graph $K_{\Delta (G)_1}$. In the same vein, $x_1z_1$ must be at distance two to $x_2t_1$ and $x_2t_2$. The only possibility is that in $G$ the edges $z_1t_1$ and $z_1t_2$ exist. It implies that $d(z_1)\geq \Delta(G) +1$, a contradiction.


 \begin{figure}[H]
  \begin{center}
\psfig{file=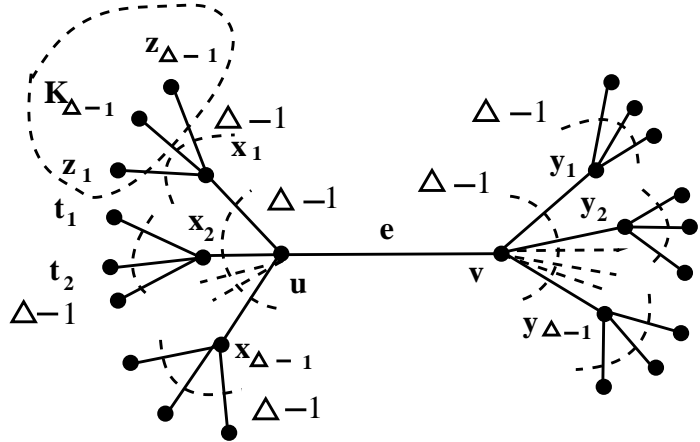,scale=.4}
\caption{Edge $e$ in $G$ corresponding to vertex $s$ in $G^{(\ast)}$}
\label{BorneSup}
\end{center}
\end{figure}
\end{proof}

\section{Proof of Proposition \ref{DeltaBiP} and Theorem \ref{cubHea} }

 \begin{propoNoNum}
 If $G=(V_1\cup V_2,E)$ is a bipartite graph with maximum degree $\Delta(G)$ and $G$ has no isolated vertices. Then $$\chi_{\rm inj}'(G)\leq\begin{cases}\min\{\Delta_{V_1}(\Delta_{V_2}-1), \Delta_{V_2}(\Delta_{{V_1}}-1)\} +1& \text{ if } \Delta(G)\geq 3\\
 3& \text{ if } \Delta(G)=2\end{cases}$$
 where $\Delta_{{V_1}}=\max\{d(u);~u\in V_1\}$ and   $\Delta_{{V_2}}=\max\{d(u);~u\in V_2\}$.
 \end{propoNoNum}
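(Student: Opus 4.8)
The plan is to route every distance-$2$ conflict through a single endpoint. Focusing first on $V_1$, I would introduce the auxiliary graph $D_1$ on vertex set $V_1$ in which $u\sim u'$ whenever $u$ and $u'$ have a common neighbour in $V_2$ (equivalently, they lie at distance $2$ in $G$). I would fix a proper colouring $\psi$ of $D_1$ and then colour each edge of $G$ by the $\psi$-value of its $V_1$-endpoint, i.e. set $c(uv)=\psi(u)$ for $u\in V_1$, $v\in V_2$. The central claim is that this one-coordinate rule already yields an injective edge-colouring: since $G$ is bipartite, hence triangle-free, the only consecutive triples are paths $P_4$, and the two extremity edges of any such path have \emph{distinct} $V_1$-endpoints that lie at distance $2$.

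To verify the claim I would take an arbitrary path of length three $p_0p_1p_2p_3$ with extremity edges $e_1=p_0p_1$ and $e_3=p_2p_3$, and split on how the bipartition meets it. If $p_0,p_2\in V_1$, the $V_1$-endpoints of $e_1,e_3$ are $p_0,p_2$, which are distinct and share the neighbour $p_1\in V_2$; if instead $p_1,p_3\in V_1$, the $V_1$-endpoints are $p_1,p_3$, which are distinct and share the neighbour $p_2\in V_2$. In both cases the two $V_1$-endpoints are adjacent in $D_1$, so $\psi$ gives them different colours and $c(e_1)\neq c(e_3)$, which is exactly the injective condition. Here it is essential that $e_1$ and $e_3$ are vertex-disjoint, so they cannot share a $V_1$-endpoint, and triangle-freeness guarantees there are no other consecutive triples to examine.

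It then remains to bound $\chi(D_1)$. A vertex $u\in V_1$ has at most $\Delta_{V_1}$ neighbours in $V_2$, and each such neighbour has at most $\Delta_{V_2}-1$ further neighbours in $V_1$, so $\deg_{D_1}(u)\le \Delta_{V_1}(\Delta_{V_2}-1)$ and a greedy colouring gives $\chi(D_1)\le \Delta_{V_1}(\Delta_{V_2}-1)+1$. Running the symmetric construction with $V_1$ and $V_2$ exchanged (colouring each edge by the distance-$2$ colour of its $V_2$-endpoint) produces a valid colouring with at most $\Delta_{V_2}(\Delta_{V_1}-1)+1$ colours, and taking the better of the two yields $\min\{\Delta_{V_1}(\Delta_{V_2}-1),\Delta_{V_2}(\Delta_{V_1}-1)\}+1$ when $\Delta(G)\ge 3$. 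For $\Delta(G)=2$ the graph is a disjoint union of paths and even cycles, and Proposition~\ref{cyclechaine} immediately gives $\chi_{\rm inj}'(G)\le 3$.

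The conceptual hurdle, and the step I would emphasise, is recognising that a single coordinate suffices. The naive estimate counts up to $2(\Delta_{V_1}-1)(\Delta_{V_2}-1)$ edges at distance $2$ from a given edge (conflicts passing through each of its two endpoints), so a direct greedy or Brooks-type argument on the conflict graph $G^{(\ast)}$ is far too weak. The saving comes precisely from the observation that \emph{every} conflict, no matter which endpoint of $e$ it passes through, forces the two edges to have $V_1$-endpoints (equivalently, $V_2$-endpoints) at distance $2$; hence one injective vertex-colouring of a single side resolves all conflicts simultaneously. The only point requiring care is confirming that this distance is exactly $2$ and that the endpoints are genuinely distinct, which is exactly where bipartiteness enters.
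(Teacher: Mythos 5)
Your proof is correct and follows essentially the same route as the paper: the same auxiliary distance-$2$ graph on $V_1$ (resp.\ $V_2$), the same rule of colouring each edge by the colour of its $V_1$-endpoint, the same degree bound $\Delta_{V_1}(\Delta_{V_2}-1)$, and the same reduction to Proposition~\ref{cyclechaine} when $\Delta(G)=2$. The only divergence is that you use a greedy colouring of the auxiliary graph where the paper invokes Brooks' theorem with a case analysis on complete graphs and odd cycles; since the stated bound carries the $+1$ anyway, your greedy argument suffices and is in fact slightly cleaner, and your explicit case check of the two placements of a $P_4$ across the bipartition fills in what the paper dismisses as ``easy to see.''
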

 \begin{proof}
 Let $G=(V_1\cup V_2,E)$ be a bipartite graph. We assume that $G$ is a connected graph, if not we consider each component.
   If $\Delta(G)=2$, then by Proposition \ref{cyclechaine} $\chi_{\rm inj}'(G)\leq 3$.
 
  Assume that $\Delta(G)\geq 3$. 
 Let  $G'=(V',E')$  be the graph obtained from $G$ as follows: $V'=V_1$ and two vertices of $G'$ are adjacent if they are at distance two. Hence, the maximum degree of $G'$ is at most $\Delta_{V_1}(\Delta_{V_2}-1)$. Since $G$ is connected graph, $G'$ is also a connected graph. We claim that $G'$ has a proper vertex coloring $\phi$ with  $\Delta_{V_1}(\Delta_{V_2}-1)$. Indeed, if $G'$ is not a  cycle or a complete graph, then by Brooks' theorem $\chi (G')\leq\Delta_{V_1}(\Delta_{V_2}-1)$; otherwise, if $G'$ is the complete graph $K_{\Delta_{V_1}(\Delta_{V_2}-1)+1}$, then $\chi_i(G')= \Delta_{V_1}(\Delta_{V_2}-1)+1$. Otherwise, if $G'$ is an odd cycle, then $\chi(G')= 3\leq \Delta_{V_1}(\Delta_{V_2}-1)+1$.

   We define an edge-coloring $\phi'$ of $G$  with $\Delta_{V_1}(\Delta_{V_2}-1)+1$ colors as follows: for any edge $uv$ of $G$ incident with a vertex $v\in V_1$, we set $\phi'(uv)=\phi(v)$. It is easy to see that for any three consecutive edges $e_1$, $e_2$, and $e_3$, we have $\phi'(e_1)\neq\phi'(e_3)$. Hence, $\phi'$ is an injective edge-coloring of $G$  and $\chi_{\rm inj}(G)\leq\Delta_{V_1}(\Delta_{V_2}-1)+1.$

Similarly, let  $G''=(V'',E'')$  be the graph obtained from $G$ as follows: $V''=V_2$ and two vertices of $G''$ are adjacent if they are at distance two. Hence, the maximum degree of $G''$ is at most $\Delta_{V_2}(\Delta_{V_1}-1)$. By a similar above argument, we can deduce that  $\chi_{\rm inj}(G)\leq\Delta_{V_2}(\Delta_{V_1}-1)+1.$ 

We conclude that  $\chi_{inj}'(G)\leq\min\{\Delta_{V_1}(\Delta_{V_2}-1), \Delta_{V_2}(\Delta_{{V_1}}-1)\}+ 1.$
\end{proof}
 We prove now the theorem for  subcubic bipartite graph :
 \begin{TheoNonNum}
 For any subcubic bipartite graph $G$, we have $\chi_{\rm inj}'(G)\leq 6$.
 \end{TheoNonNum}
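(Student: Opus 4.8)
The plan is to push the argument behind Proposition \ref{DeltaBiP} one step further. Recall that for a connected subcubic bipartite graph $G=(V_1\cup V_2,E)$ with $\Delta(G)=3$, that proof builds the auxiliary graph $G'$ on vertex set $V_1$ in which two vertices are joined when they lie at distance $2$ in $G$, properly colors $G'$, and transfers the coloring to the edges by giving every edge $uv$ with $v\in V_1$ the color of $v$. Since each vertex of $V_1$ has at most $3$ neighbours in $V_2$ and each such neighbour has at most $2$ further neighbours, $\Delta(G')\le 3\cdot 2=6$. First I would dispose of the easy regimes: if $G$ is disconnected, treat each component separately, and if $\Delta(G)\le 2$ then $\chi_{\rm inj}'(G)\le 3$ by Proposition \ref{cyclechaine}. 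So I assume $G$ is connected with $\Delta(G)=3$; then $G'$ is connected as well.

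Next I would apply Brooks' theorem to $G'$. Because $\Delta(G')\le 6$, we obtain $\chi(G')\le 6$, and hence an injective edge-coloring of $G$ with $6$ colors, unless the single Brooks exception that exceeds $6$ occurs, namely $G'\cong K_7$ (odd cycles give $\chi(G')=3$ and complete graphs on at most $6$ vertices are $6$-colorable). Everything therefore reduces to analysing the one obstruction $G'\cong K_7$.

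The conceptual heart of the proof is to show that $G'\cong K_7$ forces $G$ to be the incidence graph of the Fano plane, i.e.\ the Heawood graph. If $G'=K_7$ then $|V_1|=7$ and each vertex of $V_1$ must have all $6$ of the others at distance exactly $2$; a short counting argument then forces every $v\in V_1$ to have degree $3$, every neighbour in $V_2$ to have degree $3$, and every pair of vertices of $V_1$ to share exactly one common neighbour in $V_2$. This is precisely a Steiner triple system $S(2,3,7)$, whose unique realisation is the Fano plane, so $|V_2|=7$ and $G$ is the Heawood graph. The remaining, and to my mind the most delicate, step is to produce an explicit injective edge-coloring of the Heawood graph with $6$ colors and to verify it directly: here the auxiliary graph $G^{(\ast)}$ is $8$-regular on $21$ vertices, so neither Brooks nor a naive greedy bound delivers $6$, and one must exhibit a concrete colouring (conveniently written in the cyclic $\mathbb{Z}_7$ description $L_i=\{i+1,i+2,i+4\}$ of the lines) and check all distance-$2$ constraints. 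I expect this explicit colouring to be the main obstacle, the Fano/Steiner identification being the clean conceptual step that isolates it.
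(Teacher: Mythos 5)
Your proposal is correct and takes essentially the same approach as the paper: apply Brooks' theorem to the distance-two graph $G'$ on $V_1$, note that the only obstruction to a proper $6$-coloring is $G'\cong K_7$, show that this forces $G$ to be the Heawood graph, and then color that single graph explicitly. The only differences are cosmetic: the paper identifies the exceptional graph as the unique $(3,6)$-cage (cubic, $14$ vertices, $21$ edges, girth $6$, diameter $3$) rather than via the Fano plane, and it settles the step you call ``the main obstacle'' by exhibiting in a figure an injective edge-coloring of the Heawood graph with only $4$ colors, so that final step is indeed feasible --- with two colors to spare.
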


\begin{proof}
  Let $G=(V_1\cup V_2,E)$ be a subcubic bipartite graph. If $\Delta(G)=2$ then by Proposition \ref{cyclechaine} $\chi_{\rm inj}'(G)\leq 3$.
 
  Assume that $\Delta(G)= 3$. 
 Let  $G'=(V',E')$  be the graph obtained from $G$ as follows: $V'=V_1$ and two vertices of $G'$ are adjacent if they are at distance two. Hence, the maximum degree of $G'$ is at most $6$.
 \begin{itemize}
 
  \item Since $G$ is not a cycle  $G'$ is not a cycle. If $G'$ is not a complete graph, then by Brooks' theorem  $\chi (G')\leq 6$.
  We define an edge coloring $\phi'$ of $G$  with $6$ colors as follows: for any edge $uv$ of $G$ incident with a vertex $v\in V_1$, we set $\phi'(uv)=\phi(v)$. It easy to see that for any three consecutive edges $e_1$, $e_2$ and $e_3$, we have $\phi'(e_1)\neq\phi'(e_3)$. Hence, $\phi'$ is an injective edge-coloring of $G$  and $\chi_{\rm inj}(G)\leq 6.$
 
   \item If $G'$ is the complete graph $K_7$, then we have: 
   \begin{enumerate}

   \item Graph $G$ is a cubic graph, it has 14 vertices and 21 edges.
   
   \item The diameter of $G$ is equal to $3$, 
  
   \item The girth of $G$ is equal to 6. 
    \end{enumerate}
 
 It implies that  G is a bipartite Moore graph  with $14$ vertices and $21$ edges. It is the unique (3,6)-cage: the Headwood graph (\cite{W82},\cite{BI73} Ch.23).
 We have an injective edge-coloring of the Heawood graph with $4$ colors (Figure \ref{HEAWOOD}), and it is easy to see that it is optimal.
 \begin{figure}[H]
  \begin{center}
\psfig{file=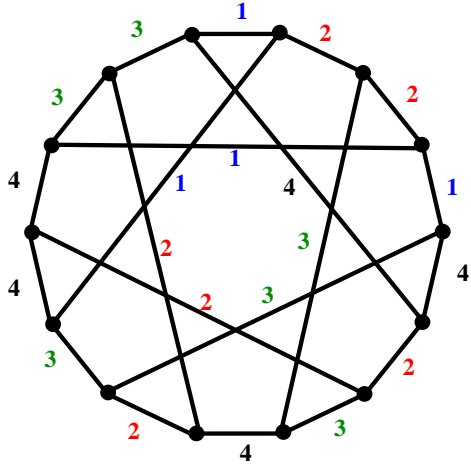,scale=.3}
\caption{An injective edge-coloring of the Heawood graph with $4$ colors.}
\label{HEAWOOD}
\end{center}
\end{figure}
 \end{itemize}
 This completes the proof.

 \end{proof}

\section{Proof of Theorem \ref{submad}}
For an edge-coloring $\phi$ of a graph $G$ and a vertex $v\in V(G)$, $\phi(v)$ denotes the set of colors used on the edges incident with $v$.

\subsection{Proof of Theorem \ref{submad}.\ref{mad1}}
We will say that an edge $uv$ is weak if at least one of $u$ and $v$ is a vertex of degree 1. A vertex $u$ is weak if at least one of the edges incident with $u$ is weak.\\

Let $C=\{1,2,3,4\}$ be a set of four colors. Suppose that Theorem \ref{submad}.\ref{mad1} is not true. Let $H$ be a counterexample minimizing $|E(H)| + |V (H)|$: H is
not injective edge-colorable with four colors, it satisfies $\textrm{mad}(H) <\frac{7}{3}$ and $\chi_{\rm inj}'(H\setminus e)\leq 4$ for any edge $e$.
\begin{lemma}\label{lem4cou}
The minimal counterexample $H$ satisfies the following properties:
\begin{enumerate}
\item $H$ does not contain a weak $2$-vertex.
\item $H$ does not contain a $3$-vertex adjacent to two $1$-vertices.
\end{enumerate}
\end{lemma}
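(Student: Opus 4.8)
The plan is to argue by reducibility: in each case I assume the forbidden configuration occurs in the minimal counterexample $H$, delete one carefully chosen edge, use the minimality hypothesis to injectively $4$-edge-color the resulting proper subgraph, and then recolor the deleted edge, contradicting the non-colorability of $H$. The feature I exploit throughout is that in an injective edge-coloring two \emph{adjacent} edges may share a color: the only edges forbidding a color for an edge $e$ are those at distance $2$ from $e$ or in a common triangle with $e$. A pendant edge $uw$ with $d(w)=1$ is therefore lightly constrained --- since $w$ has no neighbor but $u$, it lies in no triangle and can only be the first edge of a $3$-path $w\,u\,x\,y$, so the edges forbidding a color for $uw$ are exactly those $xy$ with $x\in N(u)\setminus\{w\}$ and $y\in N(x)\setminus\{u\}$. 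Reinserting $uw$ moreover creates no new distance-$2$ pair among the already-colored edges, because $uw$ can never be the \emph{middle} edge of a $3$-path; hence any color valid for $uw$ extends the coloring to all of $H$.

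For property~1, let $u$ be a weak $2$-vertex with neighbors $v$ and $w$, where $d(w)=1$, and delete $uw$. By minimality $H\setminus uw$ has an injective $4$-edge-coloring, and since $N(u)\setminus\{w\}=\{v\}$ the only edges forbidding a color for $uw$ are the at most $d(v)-1\le 2$ edges incident with $v$ other than $uv$. At most two of the four colors are blocked, a free color remains, and $uw$ can be colored --- a contradiction.

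For property~2, let $u$ be a $3$-vertex adjacent to two $1$-vertices $w_1,w_2$, with third neighbor $v$, and delete $uw_1$. In a $4$-coloring of $H\setminus uw_1$ the edges at distance $2$ from $uw_1$ arise from $N(u)\setminus\{w_1\}=\{v,w_2\}$; but the degree-$1$ vertex $w_2$ contributes none, leaving only the at most two edges incident with $v$ besides $uv$. Again at most two colors are blocked, so $uw_1$ can be recolored, the desired contradiction. (Note that $uw_1$ and $uw_2$ are adjacent and thus never constrain each other, which is exactly why the two leaf-neighbors cause no difficulty.) I expect no genuine obstacle in this proof: the only point needing care is the bookkeeping of the distance-$2$ edges, and both counts fall to at most $2$ precisely because degree-$1$ neighbors contribute no distance-$2$ edge while adjacency alone never forbids a color.
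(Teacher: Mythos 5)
Your proof is correct and follows essentially the same route as the paper's: delete the pendant edge, invoke minimality to color the rest, and observe that the pendant edge sees at most two forbidden colors (only edges reached through the non-leaf neighbor, since a degree-$1$ endpoint rules out triangles and contributes no distance-$2$ edges), leaving a free color among the four. Your explicit remark that the reinserted pendant edge can never be the middle edge of a $3$-path, so the partial coloring stays valid, is exactly the justification the paper compresses into ``we can view $\phi$ as a partial injective edge-coloring of $H$.''
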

\begin{proof}
\begin{enumerate}
\item Suppose that $H$ contains a 2-vertex $u$ adjacent to a 1-vertex $u_1$. Let $u_2$ be the second neighbor of $u
$. By  minimality of $H$, the graph $H' = H\setminus\{u_1u\}$ has an injective edge-coloring $\phi$ using $C$. We can view $\phi$ as a partial injective edge-coloring of $H$. We color $u_1u$ with a color $\alpha$ distinct from the colors of the edges adjacent to $uu_2$ (we have at most 2 forbidden colors), then we get an injective edge-coloring of $H$. Which is a contradiction.

\item Suppose that $H$ contains a 3-vertex $u$ with $N(u)= \{u_1, u_2, u_3\}$, where $d(u_1) = d(u_2) = 1$. By minimality of $H$, the graph $H' = H \setminus\{u_1u\}$ has an injective edge-coloring $\phi$ using $C$. We can view $\phi$ as a partial injective edge-coloring of $H$. We color $u_1u$ with a color $\alpha$ distinct from colors of edges adjacent to $uu_3$ (we have at most 2 forbidden colors). then we get an injective edge-coloring of $H$. We get a contradiction.
\end{enumerate}

\end{proof}

Let $H^\ast$ denote the graph obtained from $H$ by deleting all vertices of degree 1. If $H$ does not contain a 1-vertex, then $H^\ast = H$. Since $H^\ast\subseteq H$, $\textrm{mad}(H^\ast)<\frac{16}{7}$ and by the Lemma \ref{lem4cou}, $H^\ast$ does not contain a 1-vertex.
\begin{lemma}\label{C3C4P4}
$H^\ast$ satisfies the following properties:
\begin{enumerate}
\item \label{C3}$H^\ast$ has no 3-cycle $uvw$ such that $d_{H^\ast}(v) = d_{H^\ast} (w) = 2$ (Figure \ref{FigLem}.1).
\item \label{C4}$H^\ast$ has no cycle $xuvwx$ such that $d_{H^\ast}(u) =d_{H^\ast}(v) = d_{H^\ast} (w) = 2$ (Figure \ref{FigLem}.2).
\item \label{P4}$H^\ast$ has no path $xuvwy$ such that $d_{H^\ast}(u) =d_{H^\ast}(v) = d_{H^\ast} (w) = 2$ (Figure \ref{FigLem}.3).
\item\label{3V2V} $H^\ast$ does not contain a 3-vertex adjacent to three 2-vertices such that two of them has a 2-neighbor in $H^\ast$ (Figure \ref{FigLem}.4).
\end{enumerate}
\end{lemma}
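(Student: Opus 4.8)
The plan is to prove each of the four parts as a \emph{reducibility} statement, by the usual minimal-counterexample argument. For each part I would assume for contradiction that $H^\ast$ contains the configuration, locate the corresponding vertices and edges in $H$ itself, delete a small well-chosen piece to obtain a proper subgraph $H'$, invoke the minimality of $H$ to get an injective $4$-edge-coloring $\phi$ of $H'$ (any proper subgraph of $H$ satisfies $\mathrm{mad}<\tfrac73$ and is strictly smaller, hence is injectively $4$-edge-colorable), and then extend $\phi$ to all of $H$. Producing such an extension contradicts the assumption that $H$ is not injectively $4$-edge-colorable.

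The extension rests on two observations. First, to color a single uncolored edge $e=uv$ one must only avoid the colors on edges at distance $2$ from $e$ or in a common triangle with $e$; in each configuration the relevant endpoints are $2$-vertices, so this set of conflicting edges has size at most $3$ and a fourth color remains free. Second, and crucially, I would delete \emph{entire $2$-vertices} (both incident edges at once) rather than single edges. If one deletes only one edge $e=uv$ and then reinserts it as the middle edge of a path $a-u-v-b$, the reinsertion forces the already-colored end edges $au$ and $vb$ to differ, a constraint that need not hold in $\phi$; deleting the whole vertex avoids this, since every newly created distance-$2$ constraint then has one of the \emph{recolored} edges as an endpoint-edge, and so involves a color we are still free to choose. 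This is the guiding principle throughout.

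With this in hand the first three parts are short. For part~\ref{C3} I would delete the two $2$-vertices $v,w$ of the triangle, color $H'$, and then recolor the edges $uv,uw,vw$ together with any pendant edges at $v,w$; the only precolored edges that constrain them are the (at most two) edges at the third neighbor of $u$, which leaves room to complete these at most five edges within the four colors. For parts~\ref{C4} and~\ref{P4} I would delete the central $2$-vertex (the middle vertex of the path $xuvwy$, respectively the interior $2$-vertex $v$ of the $4$-cycle) and recolor its two edges: each sees at most $3$ forbidden colors because the opposite endpoints are again $2$-vertices, and by the whole-vertex-deletion principle no constraint between two precolored edges is created. Throughout, one must remember that a $2$-vertex of $H^\ast$ may be a $3$-vertex of $H$ carrying a single pendant edge: Lemma~\ref{lem4cou} forbids a $2$-vertex from having a pendant neighbor and forbids a $3$-vertex from having two, so at most one pendant occurs, and such a pendant edge has at most three conflicting edges and is recolored last without difficulty.

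The main obstacle is part~\ref{3V2V}, the densest configuration: a $3$-vertex $t$ whose three neighbors $a,b,c$ are $2$-vertices, two of them (say $a$ and $b$) having a further $2$-neighbor. Here deleting a single $2$-vertex and greedily recoloring need not succeed, because the edges $ta,tb,tc$ interact with the outer edges $aa_1,bb_1,cc_1$ through many distance-$2$ constraints. My plan is to delete one of the $2$-vertices that carries a $2$-neighbor (say $a$, removing $ta$ and $aa_1$), color $H'$ by minimality, and then distinguish cases according to the colors $\phi$ assigns around $t$ to the retained edges $tb,tc$ and their continuations; in each case one exhibits a valid pair of colors for $ta$ and $aa_1$, recoloring $tb$ or $tc$ when forced. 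The hypothesis that \emph{two} of $a,b,c$ carry a $2$-neighbor is precisely what supplies the extra freedom needed to close the last case. Verifying that every case admits a completion is the routine but laborious heart of the proof; the three preceding parts and the whole-vertex-deletion principle are what keep the analysis finite and manageable.
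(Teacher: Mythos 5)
Your overall framework (minimal counterexample, delete a small piece, color by minimality, extend) is the same as the paper's, and your plan for part~\ref{C3} is essentially the paper's proof of that part. The gap lies in your central counting claim, and it breaks parts~\ref{C4}, \ref{P4} and~\ref{3V2V}. Recall that a $2$-vertex of $H^\ast$ may be a $3$-vertex of $H$ carrying one pendant edge (Lemma~\ref{lem4cou} does not exclude this), and consider part~\ref{P4} with your deletion of the single middle vertex $v$: the uncolored edges are $uv$, $vw$, $vv'$, while $xx_1$, $xx_2$, $wy$ and $ww'$ all remain colored. The edges at distance two from $uv$ are exactly $xx_1,xx_2,wy,ww'$ --- \emph{four} precolored edges --- and in general nothing in $H\setminus v$ forces any two of them to share a color ($xx_1,xx_2$ are adjacent, $wy,ww'$ are adjacent, and the two pairs are far apart once $v$ is gone), so the coloring handed to you by minimality may use all four colors on them, leaving no color for $uv$. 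The same count kills the pendant edge: in part~\ref{C4} the edge $vv'$ conflicts with $xu,uu',xw,ww'$, and the only constraints among these in $H\setminus v$ are $\phi(uu')\neq\phi(xw)$ and $\phi(ww')\neq\phi(xu)$, so again all four colors can occur there. Hence your statements that ``each sees at most 3 forbidden colors'' and that a pendant edge ``has at most three conflicting edges'' are false, and the whole-vertex-deletion principle attacks the wrong problem: the danger is not a constraint between two precolored edges, it is the \emph{number} of precolored edges at distance two from an uncolored one, and deleting one vertex does not bring that number below four.

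This is exactly why the paper deletes more and argues harder. For part~\ref{C4} it removes \emph{two} vertices, $u$ and $v$, and colors $xu,uv,vv',uu',wv$ in an order in which each edge has at most three forbidden colors; for part~\ref{P4} it removes the five edges $uv,vw,uu',vv',ww'$, and even then greedy does not suffice: the edges $uu'$ and $ww'$ can still face four constraints, which the paper resolves by explicit recolorings (e.g.\ permuting the colors of $uv$ and $vv'$), so some swap argument is unavoidable in any scheme of this kind. For part~\ref{3V2V} your proposal is not yet a proof: after deleting only the edges $ta$ and $aa_1$, the edge $ta$ has up to six precolored edges at distance two (two beyond $b$, two beyond $c$, two at $a_1$), and recoloring $tb$ or $tc$ changes none of their colors, so if those six edges carry all four colors no completion exists however the case analysis is organized. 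The paper instead deletes all twelve edges of the configuration and uses a trick your proposal never finds: it assigns the \emph{same} color to the three edges at the central $3$-vertex and the same color to each outer pair (the paper's $zz_1$ and $z_1z_1'$, etc.), which collapses the conflict counts and lets a purely greedy extension finish.
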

\begin{proof}
\begin{enumerate}
\item Suppose that $H$ contains a 3-cycle $uvw$ such that  $d_{H^\ast}(v) = d_{H^\ast} (w) = 2$. If $x\in\{v,w\}$ has a $1-$neighbor in $H$, then we denote this neighbor by $x'$. Let $N_H(u)=\{w,v,y\}$.
Consider $H' = H \setminus\{v,w\}$. By minimality of $H$, $H'$ admits an injective edge-coloring $\phi$ using $C$. The coloring $\phi$ is a partial injective edge-coloring of $H$. We will extend $\phi$ to $H$. We color the edges $uv$, $uw$, $vw$, $vv'$, and $ww'$, in this order, as follows:
\begin{itemize}
\item $uv$ with any color $\alpha$ distinct from the colors of the edges adjacent to $uy$ (we have at most 2 forbidden colors).
\item $uw$ with any color $\beta$ distinct from $\alpha$ and distinct from the colors of the edges adjacent to $uy$ (we have at most 3 forbidden colors).
\item $vw$ with any color $\gamma$ distinct from $\alpha$, $\beta$, and $\phi(uy)$ (we have at most 3 forbidden colors).
\item $vv'$ with any color $\lambda$ distinct from $\beta$, and  $\phi(uy)$ (we have at most 2 forbidden colors).
\item $ww'$ with any color $\mu$ distinct from $\alpha$, $\lambda$, and  $\phi(uy)$ (we have at most 3 forbidden colors).
\end{itemize}
 Thus, we can extend $\phi$ to an injective edge-coloring of $H$, which is a contradiction.
\item  Suppose that $H$ contains a cycle $xuvwx$ such that $d_{H^\ast}(u) =d_{H^\ast}(v) = d_{H^\ast} (w) = 2$. If $s\in\{u,v,w\}$ has a 1-neighbor in $H$, then we denote this neighbor by $s'$, otherwise $s'$ does not exist. Let $y$ be the third neighbor of $x$ in $H$, if it exists. By the minimality of $H$, graph $H'=H\setminus\{v, u\}$ has an injective edge-coloring $\phi$ using $C$. We view $\phi$ as a partial injective edge-coloring of $H$. We will extend $\phi$ to $H$. We color the edges $xu$, $uv$, $vv'$, $uu'$, and $wv$, in this order, as follows:
\begin{itemize}
\item  $xu$ with any color $\alpha$ distinct from $\phi(ww')$ and distinct from the colors of the edges adjacent to $xy$ (we have at most 3 forbidden colors).
\item $uv$ with any color $\beta$ distinct from $\phi(xy)$, $\phi(xw)$, and $\phi(ww')$ (we have at most 3 forbidden colors).
\item  $vv'$ with any color $\gamma$ distinct from $\alpha$,  $\phi(xw)$, and $\phi(ww')$ (we have at most 3 forbidden colors).
\item  $uu'$ with any color $\lambda$ distinct from $\gamma$, $\phi(xy)$, and $\phi(xw)$ (we have at most 3 forbidden colors).
\item $wv$ with any color $\xi$ distinct from $\alpha$, $\lambda$, and $\phi(xy)$ (we have at most 3 forbidden colors).
\end{itemize}
So, we obtain an injective edge-coloring of $H$. A contradiction.

\item Assume now $H^\ast$ contains a path $xuvwy$ such that $d_{H^\ast}(u) =d_{H^\ast}(v) = d_{H^\ast} (w) = 2$. If $s\in\{u,v,w\}$ has a 1-neighbor in $H$, then we denote this neighbor by $s'$, otherwise $s'$ does not exist. Let $N_H(y)=\{w, y_1, y_2\}$ and $N_H(x)=\{u, x_1, x_2\}$. By the minimality of $H$, graph $H'= H\setminus\{uv,vw,uu',vv',ww'\}$ has an injective edge-coloring $\phi$ using $C$. We view $\phi$ as a partial injective edge-coloring of $H$. We will extend $\phi$ to $H$. First, we color the edges $uv$, $vw$ and $vv'$, in this order, as follows:
\begin{itemize}
\item $uv$ with any color $\alpha\notin\{\phi(xx_1),\phi(xx_2),\phi(wy)\}$.
\item  $vw$ with any color $\beta\notin\{\phi(yy_1),\phi(yy_2),\phi(xu)\}$.
\item $vv'$ with any color $\gamma\notin\{\phi(wy),\alpha,\phi(xu)\}$.
\end{itemize}
Hence, we get a partial injective edge-coloring $\phi_1$ of $H$. We will now color  $ww'$:
\begin{enumerate}
\item If there exists a color $\lambda$ distinct from $\phi_1(yy_1)$, $\phi_1(yy_2)$, $\alpha$, and $\gamma$ then, we set $\phi(ww')=\lambda$ and we color $uu'$ as follow:
\begin{enumerate}
\item If we can color $uu'$ with some color $\xi\notin\{\phi_1(xx_1),\phi_1(xx_2),\beta,\gamma\}$, then we get a contradiction.
\item Otherwise, w.l.o.g., we may assume that $\phi_11(vw)=2$, $\phi_1(vv')=3$, $\phi_1(xx_1)=1$, and $\phi_1(xx_2)=4$. In this case, by construction, $\alpha=2$ and $\alpha\neq \phi_1(xu)$. Hence, we permute the colors of  $uv$ and $ vv'$ and we assign color 3 to $uu'$. Thus, we get a contradiction.
\end{enumerate}
\item Otherwise, w.l.o.g., we may assume that $\alpha=1$, $\gamma=3$, $\phi(yy_1)=4$ and $\phi(yy_2)=2$.
\begin{enumerate}
\item If $\beta\neq\gamma$, then $\beta=1$  and $1\notin\{\phi'(wy),\phi'(xu)\}$, by construction. Hence,  we recolor $vv'$ with the color $1$ and assign color 3 to $ww'$. Next, we color $uu'$ with any color $\xi$ distinct from $1$, $\phi_1(xx_1)$, and $\phi_1(xx_2)$. Thus, we get a contradiction.

\item Otherwise, if $\beta=\gamma=3$, then $\phi_1(wy)\notin\{1,3\}$ and $1\notin\{\phi_1(xx_1),\phi_1(xx_2)\}$. 
\begin{itemize}
\item If $\phi_1(xu)=1$, then we uncolor $vv'$ and we set $\phi_1(uu')=1$, $\phi_1(ww')=3$ and we color $vv'$ with a color $\gamma'\notin\{\phi_1(wy),3,1\}$. Thus, we get a contradiction. 
\item If $\phi_1(xu)\neq 1$, then we uncolor the edges $vv'$ and $vw$. We set  $\phi_1(ww')=3$,  $\phi_1(vw)=\phi_1(vv')=1$, and $\phi_1(uu')=\xi'\notin\{\phi_1(xx_1),\phi_1(xx_2), 1\}$. Thus, we get a contradiction.
\end{itemize}

\end{enumerate} 
\end{enumerate}

\item Suppose that $H^\ast$ contains a $3$-vertex $u$ adjacent to three $2$-vertices $x$, $y$, and $z$ such that  $y$ (resp. $z$) has a $2$-neighbor $y_1$ (resp. $z_1$).
By Lemma \ref{C3C4P4}.\ref{C3} and Lemma \ref{C3C4P4}.\ref{C4}, the vertices  $x, y, z,y_1,z_1$ are paiwise distinct. Let $x_1$ (respectively, $t$ and $w$) denote the second neighbor in $H^\ast$ of $x$ (respectively, $y_1$ and $z_1$). For each $s\in\{x, y, z, x_1, y_1, z_1\}$, if $s$ has an  1-neighbor in $H$, then we denote this neighbor by $s'$.\\
Let $H' = H\setminus\{ux, uy, uz, xx_1, yy_1, zz_1, xx', x_1x_1', yy', y_1y_1', zz', z_1z_1'\}$. By the minimality of $H$, graph $H'$ has an injective edge-coloring $\phi$ using $C$. We view $\phi$ as a partial injective edge-coloring of $H$. We extend $\phi$ to $H$. We color in the order:
\begin{itemize}
\item  $ux$, $uy$, and $uz$ with the same color $\alpha$ distinct from $\phi(z_1t)$, $\phi(y_1w)$, and $\phi(x_1v)$.
\item $z_1z_1'$ and $zz_1$ with some color $\beta_1$ distinct from $\alpha$ and distinct from the colors of
the edges adjacent to $z_1t$ (we have at most 3 forbidden colors).
\item  $y_1y'_1$ and $yy_1$ with some color $\beta_2$ distinct from $\alpha$ and distinct from the colors of
the edges adjacent to $y_1w$ (we have at most 3 forbidden colors).
\item $x_1x_1'$ and $xx_1$ with some color $\beta_3$ distinct from $\alpha$ and distinct from the colors
of the edges adjacent to $x_1v$ (we have at most 3 forbidden colors).
\item   $zz'$ with some color $\gamma_1$ distinct from $\alpha$, $\beta_1$, and $\phi(z_1t)$.
\item $yy'$ with some color $\gamma_2$ distinct from $\alpha$, $\beta_2$, and $\phi(y_1w)$.
\item $xx'$ with some color $\gamma_3$ distinct from $\alpha$, $\beta_3$, and $\phi(x_1v)$.
\end{itemize}
We obtain an injective coloring of $H$.
Thus, we get a contradiction.
\end{enumerate}

\end{proof}
\begin{figure}[H]

\begin{center}
\psfig{file=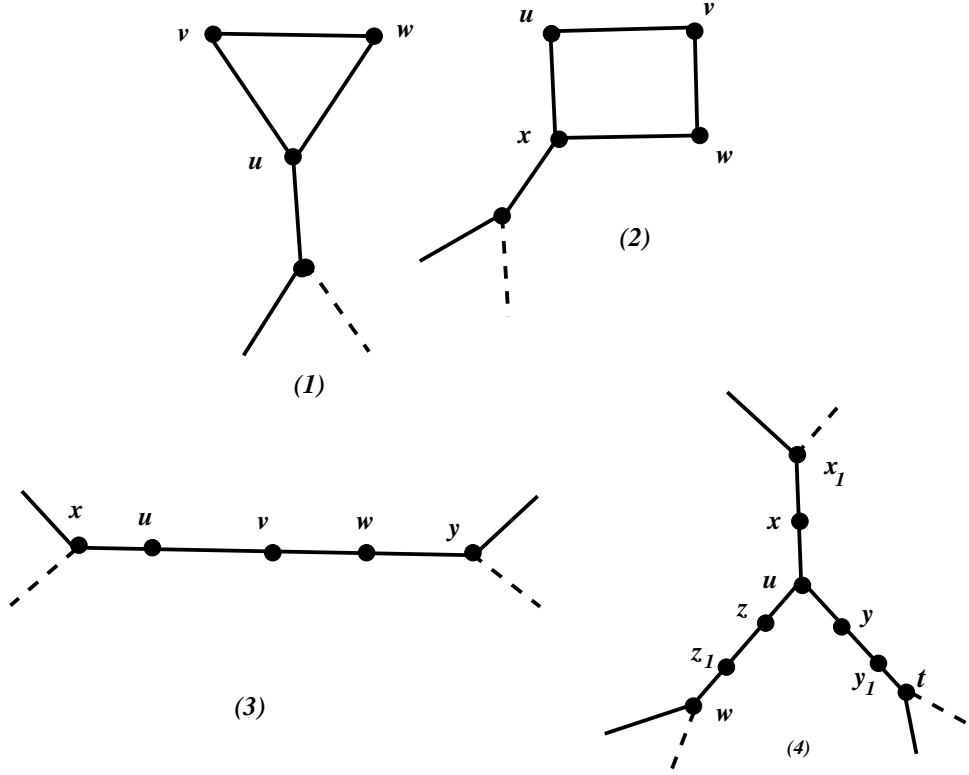,scale=.5}
\caption{Forbidden configurations.}
\label{FigLem}
\end{center}
\end{figure}
For $j\in\{1,2,3\}$, let $V_j$ be denote the set of vertices of degree $j$ in $H^\ast$ and $|V_j|=n_j$.\\ By Lemma \ref{lem4cou}, $n_1=0$. Let us call blue vertex a 2-vertex  adjacent to two 3-vertices and red vertex a 2-vertex adjacent to an other 2-vertex. Let $n_r$ the number of red vertices and $n_b$ the number of blue vertices. We have the maximum number of 2-vertices when each 3-vertex is adjacent to three 2-vertices.  By Lemma \ref{C3C4P4}.\ref{P4} and Lemma \ref{C3C4P4}.\ref{3V2V}, a 3-vertex is adjacent to at most two red vertices. So the maximum number of 2-vertices in the graph is reached when each 3-vertex is adjacent to two red vertices and one blue vertex. Hence in this case $n_b= n_3/2$ and $n_r= 2n/3$. It implies that
 $n_2\leq \frac{n_3}{2}+2n_3$. It follows that, 
\begin{equation}\label{equa11}
n_3\geq \frac{2n_2}{5}.
\end{equation}
Since $\textrm{mad}(H^\ast)\geq \frac{2|E(H^\ast)|}{|V(H^\ast)|}$, where  $2|E(H^\ast)|=3n_3+2n_2$, we get
 \begin{equation}\label{equa12}
\textrm{mad}(H^\ast)\geq 3- \frac{n_2}{n_3+n_2}.
\end{equation}
By Equation (\ref{equa11}), we may deduce that $\frac{n_2}{n_3+n_2}\leq\frac{5}{7}$, which yields $ \textrm{mad}(H^\ast)\geq \frac{7}{3}$. We get a contradiction. 

This completes the proof of Theorem \ref{submad}.\ref{mad1}.

\qed
\subsection{Proof of Theorem \ref{submad}.\ref{mad2}}
Let $C=\{1,2,3,4,5,6\}$ be a set of six colors. Suppose that Theorem \ref{submad}.\ref{mad2} is not true. Let $H$ be a counterexample minimizing $|E(H)| + |V (H)|$: H is
not injective edge-colorable with six colors, it satisfies $\textrm{mad}(H) <\frac{8}{3}$ and  $\chi_{\rm inj}'(H\setminus e)\leq 6$ for any edge $e$.
\begin{lemma}\label{lem6cou}
The minimal counterexample $H$ satisfies the following properties:
\begin{enumerate}
\item\label{lem6.1} $H$ does not contain a  $1$-vertex.
\item\label{lem6.2} $H$ does not contain two adjacent 2-vertices.
\item\label{lem6.3} $H$ does not contain a 3-vertex adjacent to two 2-vertices.
\end{enumerate}
\end{lemma}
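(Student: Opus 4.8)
The plan is to treat each of the three items as a \emph{reducible configuration}. Suppose, for a given item, that $H$ contains the stated configuration. I would pick a single edge $e$ incident with one of the low-degree vertices, delete it, and use the minimality hypothesis $\chi'_{\rm inj}(H\setminus e)\le 6$ to obtain an injective $6$-edge-coloring $\phi$ of $H\setminus e$. Viewing $\phi$ as a partial coloring of $H$, the only thing that can prevent extending it to $H$ is the color of $e$: by definition of an injective edge-coloring, $e$ must avoid the colors of the edges lying at distance $2$ from $e$ and the edges sharing a triangle with $e$. Since coloring $e$ affects only constraints in which $e$ itself participates, it suffices to show that, in $H$, the edge $e$ is at distance $2$ from, or in a triangle with, at most $5$ edges: one of the six colors then remains free for $e$, contradicting the choice of $H$.

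I would first record the elementary count. If $e=ab$, then an edge at distance $2$ from $e$ is the far end-edge of a path of length $3$ whose near end-edge is $e$; such a path has its middle edge at $a$ or at $b$. Hence the edges at distance $2$ from $e$ through $a$ are the edges $cw$ with $c\in N(a)\setminus\{b\}$ and $w\in N(c)\setminus\{a\}$, of which there are at most $(d(a)-1)\cdot 2$ in a subcubic graph, and symmetrically through $b$. If $d(a)=1$ there are none through $a$ and $e$ lies in no triangle, and if $d(a)=2$ there are at most $2$ through $a$.

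Applying this to the three items is then routine. For item~\ref{lem6.1}, take a $1$-vertex $u$ with neighbour $v$ and delete $e=uv$: there is nothing through $u$ and no triangle, so $e$ sees at most $2\cdot 2=4$ edges through $v$. For item~\ref{lem6.2}, take adjacent $2$-vertices $u,v$ with remaining neighbours $u_1,v_1$ and delete $e=uv$: $e$ sees at most $2$ edges at $u_1$ and at most $2$ at $v_1$, for a total of at most $4$. For item~\ref{lem6.3}, take a $3$-vertex $u$ with neighbours $x,y,z$, where $x,y$ are $2$-vertices, and delete $e=ux$; then $e$ sees at most $2$ edges through $x$ (at the other neighbour $x_1$), and through $u$ it sees at most $1$ edge at the $2$-vertex $y$ and at most $2$ edges at $z$, for a total of at most $5$. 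In each case the bound is below $6$, so $\phi$ extends and $H$ is not a counterexample.

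The argument is entirely uniform, and the only point requiring care is the bookkeeping of the edges seen by $e$: one must be sure that every length-$3$ path and every triangle through $e$ is counted, while also checking that degenerate overlaps — a triangle on the deleted edge, or $x_1\in\{y,z\}$ in item~\ref{lem6.3} — never raise the count, since they only identify two of the counted edges or suppress one. The genuinely tight case is item~\ref{lem6.3}, which can force all five forbidden colors to be distinct (when $x_1$ and $z$ both have degree $3$); this is precisely why six colors are needed here and no fewer, and it is this configuration that will drive the discharging argument to come.
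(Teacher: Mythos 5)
Your key claim --- that after deleting an edge $e=uv$ and $6$-coloring $H\setminus e$, ``the only thing that can prevent extending it to $H$ is the color of $e$'' --- is false, and this breaks your proofs of items~2 and~3. When you re-insert $e$ into the graph, you do not only create triples of consecutive edges in which $e$ is an \emph{end} edge (these indeed just forbid colors for $e$); you also create triples in which $e$ is the \emph{middle} edge. Concretely, for every edge $f$ incident with $u$ and every edge $g$ incident with $v$ (both distinct from $e$), the edges $f,e,g$ are consecutive in $H$, so the extended coloring must satisfy $\phi(f)\neq\phi(g)$ --- a constraint that does not involve the color of $e$ at all. In item~2, the pair $(uu_1,vv_1)$ need not be at distance two (nor in a triangle) in $H\setminus uv$, so the coloring of $H\setminus uv$ may well give $uu_1$ and $vv_1$ the same color; then the triple $u_1u,\,uv,\,vv_1$ is monochromatic at its ends and \emph{no} choice of color for $uv$ repairs this. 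The same failure occurs in item~3 with the pairs $(uy,xx_1)$ and $(uz,xx_1)$ after deleting $ux$. Only item~1 survives, because there the endpoint $u$ has degree $1$, so $e$ can never be a middle edge. Your closing remark about ``degenerate overlaps'' never raising the count is beside the point: the problem is not the count of end-edge constraints but a whole family of constraints you never counted.

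This is precisely why the paper does not delete a single edge in items~2 and~3, but deletes a vertex (item~2: the $2$-vertex $v$) or two vertices (item~3: both $2$-vertices $v$ and $w$), so that on each side of every potential middle-edge position at least one edge is uncolored, and then recolors the deleted edges in a carefully chosen order, explicitly adding the middle-edge constraints to the lists of forbidden colors. For instance, in the paper's item~2 the edge $uv$ must avoid $\phi(ww_1)$ and $\phi(ww_2)$ (constraints in which $vw$ is the middle edge), and $vw$ must avoid $\phi(ut)$ (a constraint in which $uv$ is the middle edge); in item~3 the edge $vy$ must avoid $\phi(xu)$, and $uv$ must avoid the color given to $wz$, etc. A correct repair of your argument would have to do the same: delete enough edges around the configuration that all pairs newly constrained through a middle edge contain at least one uncolored edge, and then verify that each uncolored edge still has at most $5$ forbidden colors when its turn comes --- which is exactly the paper's proof.
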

\begin{proof}

\begin{enumerate}
\item Assume that $H$ has a 1-vertex $u$ adjacent to some $v$. Let $N_H(v)=\{u,u_1,u_2\}$. By the minimality of $H$, the graph $H\setminus\{u\}$ has an injective edge-coloring $\phi$ using $C$. The coloring $\phi$ is a partial injective edge-coloring of $H$. We extend $\phi$ to $H$ by coloring $uv$ with any color $\alpha$ distinct from the colors of the edges adjacent to $vu_1$ and $vu_2$ (we have at most 4 forbidden colors).

\item Suppose that $H$ contains a $2$-vertex $u$ adjacent to a $2$-vertex $v$. Let $t$ and $w$ be the other neighbors of $u$ and $v$ respectively. Let $N_H(w)=\{v,w_1,w_2\}$. By minimality of $H$, the graph $H' = H\setminus\{v\}$ has an injective edge-coloring $\phi$ using $C$. We extend $\phi$ to $H$. First, we color $uv$ with any color $\alpha$ distinct from the edges adjacent to $tu$ and $vw$ (we have at most 4 forbidden colors). Next, we color  $vw$ with any color $\beta$ distinct from $\phi(ut)$, and distinct from the edges adjacent to $ww_1$ and $ww_2$ (we have at most 5 forbidden colors).

\item Suppose $H$ contains a 3-vertex $u$ adjacent to two 2-vertices $v$ and $w$ whose second neighbors are $y$ and $z$, respectively. By minimality of $H$, there exists an injective edge-coloring $\phi$ of $H' = H \setminus\{v,w\}$ using $C$. Let $x$ be the third neighbor of $u$ and $N_H(y)=\{v,y_1,y_2\}$, $N_H(z)=\{w,z_1,z_2\}$. We will extend $\phi$ to $H$ by coloring, in the order, the following edges:
\begin{itemize}
\item  $vy$ with some color $\alpha$ different from $\phi(xu)$ and different from the colors of the edges adjacent to $yy_1$ and $yy_2$.
\item  $wz$ with some color $\beta$ different from $\phi(xu)$ and different from the colors of the edges adjacent to $zz_1$ and $zz_2$.
\item $uv$ with some color $\gamma$ different from $\beta$, $\phi(yy_1)$, $\phi(yy_2)$, and different from the colors of the edges adjacent to $ux$.
\item  $uw$ with some color $\lambda$ different from $\alpha$, $\phi(zz_1)$, $\phi(zz_2)$, and different from the colors of the edges adjacent to $ux$.
\end{itemize}
Thus, we can extend $\phi$ to $H$, which is a contradiction.

\end{enumerate}
\end{proof}

For $j\in\{1,2,3\}$, let $V_j$ be denote the set of vertices of degree $j$ in $H$ and $|V_j|=n_j$.\\ By Lemma \ref{lem6cou}.\ref{lem6.1}, $n_1=0$. By Lemma \ref{lem6cou}.\ref{lem6.2}, every $v\in V_2 $ has two neighbors in $V_3$. By Lemma \ref{lem6cou}.\ref{lem6.3}, every $v\in V_3$ has at most one neighbor in $V_2$. It follows that, 
\begin{equation}\label{equa21}
n_3\geq 2n_2.
\end{equation}
Since $\textrm{mad}(H)\geq \frac{2|E(H)|}{|V(H)|}$, where  $2|E(H)|=3n_3+2n_2$, we get
 \begin{equation}\label{equa22}
\textrm{mad}(H)\geq 3- \frac{n_2}{n_3+n_2}.
\end{equation}
By Equation (\ref{equa21}), we may deduce that $\frac{n_2}{n_3+n_2}\leq\frac{1}{3}$.
Thus, Equation (\ref{equa22}) yields $ \textrm{mad}(H)\geq \frac{8}{3}$. We get a contradiction.

This completes the proof of Theorem \ref{submad}.\ref{mad2}.

\qed

\subsection{Proof of Theorem \ref{submad}.\ref{mad3}}
Let $C =\{1,2,\ldots,7\}$ be a set of seven colors. Suppose to the contrary that Theorem \ref{submad}.\ref{mad3} is not true.  Let $H$ be a counterexample minimizing $|E(H)| + |V (H)|$: $H$ is not injective edge-colorable with seven colors,  it satisfies $\textrm{mad}(H) <3$ and $\chi_{\rm inj}'(H\setminus u)\leq 7$ for any vertex $u$.

Assume that $H$ has a 1-vertex $u$ adjacent to some $v$. Let $N_H(v)=\{u,u_1,u_2\}$. By the minimality of $H$, the graph $H\setminus\{u\}$ has an injective edge-coloring $\phi$ using $C$.  $\phi$ is a partial injective $7$-edge-coloring of $H$. We extend $\phi$ to $H$ by coloring $uv$ with any color $\alpha$ distinct from the edges adjacent to $vu_1$ and $vu_2$ (we have at most 4 forbidden colors). So, $\delta(H)\geq 2$.

Suppose now that $H$ has a $2$-vertex $u$ adjacent to $v$ and $w$. Let $N_H(v)=\{u, v_1, v_2\}$ and $N_H(w)=\{u,w_1,w_2\}$. By the minimality of $H$, graph $H\setminus\{u\}$ has an
injective edge-coloring $\phi$ using $C$. The coloring $\phi$ is a partial injective edge-coloring of $H$ using the colors of $C$. We will extend $\phi$ to $H$. First, we color the edge  $uv$ with any color $\alpha$ distinct from $\phi(ww_1)$, $\phi(ww_2)$, and distinct from the colors of the edges adjacent to $vv_1$ and $vv_2$ (we have at most 6 forbidden colors). Next, we color the edge $uw$ with any color $\beta$ distinct from $\phi(vv_1)$, $\phi(vv_2)$ and distinct from the colors of the edges adjacent to $ww_1$ and $ww_2$ (we have at most 6 forbidden colors). Thus, we can extend a coloring $\phi$ to $H$, which is a contradiction.

Therefore, $H$ is a $3$-regular graph which contradicts the hypothesis $\textrm {mad}(G)<3$.

\qed


\section{Proof of Theorem \ref{AS4-1}}

 A path $P=v_1v_2\ldots v_k$ is called a simple path in $G$ if $v_2,\ldots, v_{k-1}$ are all 2-vertices in $G$. The length of a path is the number of its edges.\\

 It is easy to see that the greedy coloring with $k + 1$ colors of a $k$-tree gives an acyclic coloring (see \cite{BLS}). Moreover, it is well-known that any outerplanar graph is a partial $2$-tree \cite{FRR}. Hence, if $G$ is an outerplanar graph, then $\chi_a(G)\leq 3$. By Theorem \ref{AS1} we have
\begin{corollary}
If $G$ is an outerplanar graph, then $\chi_{\rm inj}'(G)\leq 9$.
\end{corollary}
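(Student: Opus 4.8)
The plan is to derive the bound as a direct corollary of Theorem~\ref{AS1}, so the entire task reduces to controlling the acyclic chromatic number of an outerplanar graph. Indeed, once I know that every outerplanar graph $G$ satisfies $\chi_a(G)\le 3$, substituting $k=3$ into the bound $\chi_{\rm inj}'(G)\le \frac{3k(k-1)}{2}$ of Theorem~\ref{AS1} gives $\frac{3\cdot 3\cdot 2}{2}=9$, which is exactly the assertion.

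To obtain $\chi_a(G)\le 3$, I would proceed in two steps. First, I would invoke the standard structural fact that every outerplanar graph is a partial $2$-tree, that is, a subgraph of some $2$-tree \cite{FRR}. Second, I would use that every $2$-tree $T$ admits an acyclic $3$-colouring: ordering the vertices of $T$ according to its construction (each new vertex being joined to the two ends of an already present edge) and colouring greedily with three colours yields a proper colouring which, as recorded in \cite{BLS}, is moreover acyclic --- for a $k$-tree the first-fit colouring along this elimination order contains no bicoloured cycle, and the case $k=2$ gives $\chi_a(T)\le 3$.

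It remains to transfer the conclusion from the $2$-tree $T$ to the subgraph $G$. Here I would appeal to the monotonicity of the acyclic chromatic number: the restriction of an acyclic $3$-colouring of $T$ to $G$ is still proper, and any cycle of $G$ that were bicoloured would also be a bicoloured cycle of $T$, which is impossible; hence $\chi_a(G)\le\chi_a(T)\le 3$. Applying Theorem~\ref{AS1} with $k=3$ then finishes the proof. There is no real obstacle in this argument: its only nontrivial ingredient is the acyclicity of the greedy colouring of a $2$-tree, which is precisely the cited result of \cite{BLS}, so the corollary follows in a couple of lines.
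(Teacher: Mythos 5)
Your proof is correct and follows essentially the same route as the paper: outerplanar graphs are partial $2$-trees, the greedy colouring of a $2$-tree is acyclic (giving $\chi_a(G)\le 3$), and then Theorem~\ref{AS1} with $k=3$ yields the bound $9$. Your only addition is spelling out the (true, and implicitly used) monotonicity of $\chi_a$ under subgraphs, which the paper leaves tacit.
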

For the proof of Theorem \ref{AS4-1}, we will use the following folklore lemma (Proposition 20 in \cite{AMANDA}). 
\begin{lemma}\label{MR}
Every outerplanar graph $G$ with $\delta(G)\geq 2$ contains a cycle $xv_1\ldots v_kyx$, where $v_1,\ldots, v_k$ are
2-vertices, $k\geq1$, $d_G(x)\geq2$, and $d_G(y)\geq 2$.
\end{lemma}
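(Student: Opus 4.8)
The plan is to reduce to the $2$-connected case and then exhibit the required cycle as the boundary of an ``innermost'' ear of a fixed outerplanar embedding. Observe first that the conditions $d_G(x)\geq 2$ and $d_G(y)\geq 2$ are automatic, since $\delta(G)\geq 2$; hence the real content is the existence of a cycle along which all internal vertices $v_1,\dots,v_k$ are genuine $2$-vertices of $G$ and whose two endpoints are joined by an extra edge $xy$. Because degrees are unchanged inside a connected component, I may assume $G$ is connected, and as $\delta(G)\geq 2$ the graph $G$ contains a cycle.

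First I would pass to a leaf block. Consider the block-cut-vertex tree of $G$ and let $B$ be a leaf block, so $B$ contains at most one cut vertex of $G$, which I denote $c$ when it exists. I claim $B$ is $2$-connected: if $B$ were a bridge $cw$, then $w$ would lie in no other block and hence satisfy $d_G(w)=1$, contradicting $\delta(G)\geq 2$. The purpose of choosing a leaf block is that every vertex of $B$ other than $c$ has all its $G$-edges inside $B$, so its degree in $G$ equals its degree in $B$; in particular a vertex that is a $2$-vertex of $B$ and is different from $c$ is a $2$-vertex of $G$.

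Next I would work inside the $2$-connected outerplanar graph $B$, fixing an embedding with every vertex on the outer face, so that the outer boundary is a Hamiltonian cycle $C$ and each remaining edge is a non-crossing chord. If $B$ has no chord, then $B=C$ is a cycle; writing $C=x u_1\dots u_{m-1}x$ with $x=c$ (or $x$ arbitrary when $c$ does not exist), the edge $xu_1$ together with the complementary arc realizes the configuration, its internal vertices being non-cut, hence $2$-vertices of $G$. If $B$ has a chord, I would examine the faces cut off by the chords: the weak dual of $B$ is a tree with at least two nodes, hence at least two leaves, and each leaf face is an ear bounded by a single chord $xy$ together with an arc $x v_1\dots v_k y$ of $C$. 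Every interior vertex $v_i$ of such an ear has degree $2$ in $B$, since a further chord at $v_i$ would either cross $xy$ or subdivide the face. Distinct leaf ears have disjoint interiors, so among the (at least two) leaf ears at least one has its interior arc avoiding $c$; choosing that ear makes each $v_i$ a non-cut vertex, hence a $2$-vertex of $G$, and the chord $xy$ closes the desired cycle $x v_1\dots v_k y x$.

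The main obstacle I anticipate is bookkeeping rather than a deep idea: I must ensure that $v_1,\dots,v_k$ are $2$-vertices of the \emph{whole} graph $G$ and not merely of the block $B$, which is precisely why the leaf-block reduction and the ``avoid $c$'' choice among at least two leaf ears are needed. A secondary point to verify carefully is the claim that an interior vertex of a leaf ear carries no chord; this is where the non-crossing property of chords in an outerplanar embedding is used, and it is what guarantees $d_B(v_i)=2$.
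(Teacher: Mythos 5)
Your proof is correct, but there is nothing in the paper to compare it against: the paper does not prove this lemma at all, stating it as a folklore result and citing it as Proposition 20 of \cite{AMANDA}. Your argument is a complete and self-contained route to the statement: reduce to a leaf block $B$ of the block--cut-vertex tree (correctly ruling out the bridge case via $\delta(G)\geq 2$, so that every vertex of $B$ other than the cut vertex $c$ keeps its $G$-degree inside $B$), then inside the $2$-connected outerplanar block either take the chordless Hamiltonian cycle itself, or pick a leaf face of the weak dual tree whose interior arc avoids $c$; the chord of that leaf face closes the required cycle, and your crossing argument correctly shows each interior arc vertex has degree $2$ in $B$. Two steps you invoke without proof are standard but deserve a line each in a written version: that the weak dual of a $2$-connected outerplanar graph is a tree, and that distinct leaf ears have disjoint interior arcs --- the latter follows because each edge of the outer cycle borders exactly one inner face, and an interior vertex of an ear has both of its edges on that ear's arc, so it cannot be interior to two leaf faces; with at least two leaf faces, one therefore avoids the single vertex $c$. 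Your opening observation is also accurate: since $\delta(G)\geq 2$, the conditions $d_G(x)\geq 2$ and $d_G(y)\geq 2$ are automatic, and the real content of the lemma is the existence of the ear whose interior vertices are $2$-vertices of $G$ itself, which is exactly what your leaf-block reduction plus the ``avoid $c$'' choice delivers.
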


We first prove a strong property for the easy case.
\begin{lemma}\label{Ci-Cj}
Let $G$ be the union of two cycles $C$ and $C'$ such that these two cycles have exactly one edge in common. Then $G$ has an injective 4-edge-coloring such that, in every simple path of length three, exactly two colors appear.
\end{lemma}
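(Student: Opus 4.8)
The plan is to read $G$ as a theta graph. Since the two cycles $C$ and $C'$ share exactly one edge $e_0=uv$, the graph $G$ consists of $e_0$ together with two internally disjoint $u$--$v$ paths $P_1=ua_1\cdots a_pv$ and $P_2=ub_1\cdots b_qv$, with $C=P_1+e_0$ and $C'=P_2+e_0$; here $u$ and $v$ are the only vertices of degree $3$, while every $a_i,b_j$ is a $2$-vertex. I write $f_1,\dots,f_{p+1}$ for the edges of $P_1$ (so $f_1=ua_1$, $f_{p+1}=a_pv$) and $g_1,\dots,g_{q+1}$ for those of $P_2$. The first step is the structural observation that, because $u$ and $v$ are the only vertices of degree at least $3$ and both are endpoints of $e_0$, every simple path of length three lies entirely inside $P_1$ or inside $P_2$, and $e_0$ lies in no simple path of length three. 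Hence the required property is a condition only on how the two paths are coloured, equivalent to asking that any three consecutive edges of $P_1$ (and of $P_2$) use exactly two colours. Combined with injectivity ($c_i\neq c_{i+2}$), a short argument shows this forces a \emph{block pattern}: the colour sequence along each path is a concatenation of monochromatic runs in which every interior run has length exactly $2$, the two end runs have length $1$ or $2$, and consecutive runs receive different colours (the $\dots aabbcc\dots$ shape); conversely every such pattern is admissible.

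Next I would isolate the genuine interaction, which is entirely local to $u$ and $v$. The only constraints linking $P_1$, $P_2$ and $e_0$ are the distance-$2$ constraints among the nine edges $f_1,f_2,f_p,f_{p+1},g_1,g_2,g_q,g_{q+1},e_0$: namely $f_1\neq f_{p+1}$, $f_1\neq g_{q+1}$, $g_1\neq f_{p+1}$, $g_1\neq g_{q+1}$ (paths running through $e_0$), then $f_2\neq g_1$, $g_2\neq f_1$, $f_p\neq g_{q+1}$, $g_q\neq f_{p+1}$ (a second edge against the opposite corner), and finally $c(e_0)\notin\{c(f_2),c(f_p),c(g_2),c(g_q)\}$, together with the rainbow-triangle constraints that appear when $p=1$ or $q=1$. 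Everything else is interior to a path and is filled in afterwards by the block pattern.

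Then I would solve this finite system over $\{1,2,3,4\}$. Set the corner colours $c(f_1)=1$, $c(f_{p+1})=2$, $c(g_1)=3$, $c(g_{q+1})=4$, which already satisfies the four corner inequalities. The delicate feature is that on each path the relation \emph{``the second edge equals the end edge''} at its two ends is coupled by the parity of the path length (for instance, a path with five edges cannot have its two extreme edges coloured differently using only two colours). I would therefore split into the four parity cases of $(p,q)$ and, in each, choose $c(f_2),c(f_p)$ and $c(g_2),c(g_q)$ — respecting the corner inequalities and this parity coupling — so that $\{c(f_2),c(f_p),c(g_2),c(g_q)\}$ omits at least one colour; that omitted colour is then assigned to $e_0$. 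Finally the interior of each path is completed by size-$2$ blocks, which is always possible since interior blocks may take any of the four colours and need only differ from their two neighbours.

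The main obstacle is exactly this coordination. The parity coupling can force the two extreme edges of a path to coincide, so a naive ``palette $\{1,2\}$ on $P_1$, palette $\{3,4\}$ on $P_2$'' scheme both fails the corner inequality $f_1\neq f_{p+1}$ and tends to make $\{f_2,f_p,g_2,g_q\}$ use all four colours, leaving no colour for $e_0$. Having a fourth colour is precisely what provides the slack to keep one colour free for $e_0$ in every parity case, and to flip an end relation (by inserting one block of a third colour deep inside a path) without disturbing the local constraints at $u$ and $v$. The degenerate short paths, where $p=1$ or $q=1$ so that a cycle is a triangle, carry no simple path of length three and are dispatched by a direct colouring, which I would check separately.
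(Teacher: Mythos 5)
Your structural reduction is correct, and it is essentially the skeleton that the paper's explicit construction relies on: every simple path of length three lies inside one of the two $u$--$v$ paths, the common edge $e_0$ lies in none of them, injectivity plus the two-colour condition forces the block pattern (interior runs of length exactly $2$, end runs of length $1$ or $2$), and the cross-constraints are exactly the ones you list at $u$ and $v$. The gap is in the colouring step: fixing the corner colours as $c(f_1)=1$, $c(f_{p+1})=2$, $c(g_1)=3$, $c(g_{q+1})=4$ makes your key claim false. Take $p=q=3$, i.e.\ two $5$-cycles sharing an edge. A path with four edges admits only the run structures $(2,2)$ and $(1,2,1)$, so with corners $1,2$ the only admissible patterns on $P_1$ are $1122$, $1332$, $1442$; your constraints $c(f_2)\neq c(g_1)=3$ and $c(f_p)\neq c(g_{q+1})=4$ eliminate the last two, forcing $1122$, i.e.\ $(c(f_2),c(f_p))=(1,2)$. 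Symmetrically $P_2$ is forced to $3344$, so $\{c(f_2),c(f_p),c(g_2),c(g_q)\}=\{1,2,3,4\}$ and no colour remains for $e_0$. The same collapse occurs for $(p,q)=(3,1)$: the four-edge path is forced to $1122$ while the rainbow triangle forces $c(e_0)\notin\{3,4\}$. These are short paths, so your proposed rescue of inserting a block of a third colour ``deep inside a path'' has no room to operate; within your scheme these cases are genuinely dead ends, not unfinished ones.

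The missing idea is that the corners should \emph{not} be pairwise distinct. The edges $f_1$ and $g_1$ are adjacent at $u$ and never lie in a common triangle of a theta graph, so nothing forbids $c(f_1)=c(g_1)$; moreover, when $p,q\geq 2$, $e_0$ is adjacent to both and may take that very same colour. This is exactly what the paper's proof does: in its notation it sets $\phi(xv_1)=\phi(xw_1)=\phi(xy)=\alpha$, gives the two corners at the other branch vertex $y$ two further colours $\lambda$ and $\beta$, and fills both interiors with blocks of a single shared fourth colour $\gamma$ (with the degenerate triangle cases $i=1$ or $j=1$ handled by a separate rainbow assignment). Then every second edge automatically avoids $\alpha=c(e_0)$, and all cases, including $p=q=3$, go through. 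So your argument can be repaired by replacing the corner assignment $(1,2,3,4)$ with $(\alpha,\lambda,\alpha,\beta)$ and putting $\alpha$ on $e_0$, but as written the proof does not close.
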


\begin{proof}
Let $G$ be the union of two cycles $C=xv_1\ldots v_iyx$ and $C'=xw_1\ldots w_jyx$ such that these two cycles have exactly one edge $xy$ in common. We define an injective 4-edge-coloring $\phi$ of $G$ with colors $\{\alpha,\beta,\gamma,\lambda\}$  such that in every simple path of length three has two colors as follows.
\begin{enumerate}
\item If $i\geq 2$ and $j\geq 2$, then we set first $\phi(xv_1)=\phi(xw_1)=\phi(xy)=\alpha$ and $\phi(yw_j)=\beta$. Next, we assign to the ordered edges $w_{j}w_{j-1},\ldots,w_2w_1$ the ordered colors  $\beta\gamma\gamma\beta\beta\gamma\gamma\beta\beta\ldots$ if $j$ is even  and $\gamma\gamma\beta\beta\gamma\gamma\beta\beta\ldots$ if $j$ is odd. Finally, we assign to the ordered edges $yv_i,v_iv_{i-1},\ldots,v_3v_2,v_2v_1$ the ordered colors $\lambda\gamma\gamma\lambda\lambda\gamma\gamma\ldots$ if $j$ is odd  and $\lambda\lambda\gamma\gamma\lambda\lambda\gamma\gamma\ldots$ if $j$ is even.

\item If $i=1$ and $j\geq 2$, then we set $\phi(xv_1)=\phi(xw_1)=\alpha$, $\phi(yv_1)=\beta$, and $\phi(xy)=\gamma$.
 Next, we assign to the ordered edges $yw_j,w_{j}w_{j-1},\ldots,w_2w_1$ the ordered colors  $\beta\lambda\lambda\beta\beta\lambda\lambda\beta\beta\ldots$ if $j$ is odd and  $\lambda\lambda\beta\beta\lambda\lambda\beta\beta\ldots$ if $j$ is even.
\item If $i=1$ and $j=1$, then we set $\phi(xv_1)=\phi(xw_1)=\alpha$, $\phi(yv_1)=\phi(yw_1)=\beta$, and $\phi(xy)=\gamma$.
\end{enumerate} 

\end{proof}

Now we prove the following stronger version of Theorem \ref{AS4-1}:
\begin{theorem}\label{AS4-2}
If $G$ is an outerplanar graph with maximum degree $\Delta(G)=3$, then $G$ has an injective 5-edge-coloring such that, in every simple path of length three, exactly two colors appear. 
\end{theorem}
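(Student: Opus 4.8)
The plan is to prove Theorem~\ref{AS4-2} by induction on $|V(G)|+|E(G)|$, using the structural decomposition provided by Lemma~\ref{MR} as the engine of the induction. The base case is handled by Lemma~\ref{Ci-Cj} together with the trivial cases (paths and single cycles, covered by Proposition~\ref{cyclechaine}); note that the stronger ``exactly two colors in every simple path of length three'' invariant is exactly what Lemma~\ref{Ci-Cj} was crafted to supply, so the inductive hypothesis can feed on itself. First I would reduce to the case $\delta(G)\geq 2$: if $G$ contains a $1$-vertex $u$ with neighbor $v$, I would delete $u$, apply induction to $G\setminus u$, and then color $uv$ greedily, checking that at most four colors are forbidden (the edges at distance two from $uv$ through $v$, of which there are at most two, plus the constraint coming from the stronger path-condition). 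Since $\Delta(G)=3$, the number of forbidden colors stays below $5$, and I must verify that a valid choice preserving the two-color-per-path property remains.

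For the main inductive step I would assume $\delta(G)\geq 2$ and invoke Lemma~\ref{MR} to extract a cycle $xv_1\ldots v_k y x$ in which $v_1,\ldots,v_k$ are $2$-vertices and $d_G(x),d_G(y)\geq 2$. The natural move is to delete the interior $2$-vertices $v_1,\ldots,v_k$ (and the edges along the path) to obtain a smaller outerplanar graph $G'$, apply the inductive hypothesis to obtain a $5$-edge-coloring of $G'$ satisfying the strong invariant, and then extend the coloring back along the path $x v_1 \ldots v_k y$. The key point is that because $x$ and $y$ have degree at most $3$, only a bounded number of colors near $x$ and near $y$ constrain the two new end-edges $xv_1$ and $v_k y$, while the interior edges $v_iv_{i+1}$ see only their two neighbors on the path. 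I would color the path end-inward, matching the pattern used in Lemma~\ref{Ci-Cj} (alternating two colors in blocks so that any three consecutive edges show exactly two colors), reserving the two ``fresh'' colors $\gamma,\lambda$ for the interior and using the invariant at $x$ and $y$ to pin down the boundary edges. When $x$ and $y$ are adjacent in $G'$ (so the path closes a second cycle through an existing edge $xy$), Lemma~\ref{Ci-Cj} applies almost verbatim as a local recoloring.

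The delicate part of the argument, and the step I expect to be the main obstacle, is the interface between the inherited coloring on $G'$ and the new edges at the two attachment vertices $x$ and $y$. At each of these vertices the already-colored edges (the ones that survive in $G'$) impose distance-two constraints on $xv_1$ and on $v_ky$, and simultaneously the strong ``exactly two colors per simple path of length three'' condition must be maintained \emph{across} $x$ and $y$ into the old part of the graph. With only five colors and vertices of degree $3$, a naive greedy extension can fail, so I anticipate needing a short case analysis according to the residue of $k \bmod 4$ (to choose the right alternating block pattern, just as in Lemma~\ref{Ci-Cj}) together with a local recoloring/permutation of colors at $x$ and $y$ when the greedy choice is blocked. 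The colors $\phi(xy)$ or the two edges incident to $x$ and $y$ outside the path give enough freedom to permute, and I would argue that in every configuration at least one of these permutations frees a feasible color. Finally I would check that the extended coloring is genuinely injective (no two edges at distance two or in a common triangle agree) and that the strengthened path-condition holds on every newly created simple path of length three; since outerplanar graphs are sparse and the attachment vertices have degree at most $3$, these verifications are finite and local, which is what makes the induction close.
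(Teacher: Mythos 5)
Your skeleton coincides with the paper's: a minimal counterexample, reduction to $\delta(G)\geq 2$, extraction of the cycle of Lemma~\ref{MR}, deletion of the chain of $2$-vertices, extension by alternating two-color blocks, with Lemma~\ref{Ci-Cj} reserved for a degenerate case. But the decisive content of the proof is exactly the step you defer. Your sentence ``I would argue that in every configuration at least one of these permutations frees a feasible color'' is the entire difficulty; the paper proves it through two reducibility claims whose case analysis distinguishes $k=1$ from $k\geq 2$, $k$ even from $k$ odd, $d_G(y)=2$ from $d_G(y)=3$, and, when $d_G(x)=d_G(y)=3$, whether the third neighbors $x_1$ and $y_1$ of $x$ and $y$ coincide. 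Moreover, the successful extensions are not obtained by permuting colors: in several cases one must \emph{recolor} edges of the smaller graph that were already colored (the edges $xv_1$ and $xy$ are given fresh colors subject only to constraints at $x_1$), and one must check that this recoloring breaks neither injectivity nor the invariant inside the old graph. With five colors and degree-$3$ attachment vertices, plain greedy counting does not close, which is precisely why these recoloring devices are needed; until that analysis is written out, there is no proof.

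There is also a step that fails as stated: your handling of the case where deleting the chain drops the maximum degree to $2$. You propose paths and cycles as base cases ``covered by Proposition~\ref{cyclechaine}'', but that proposition only supplies an injective coloring, not the strengthened invariant. In fact an odd cycle admits \emph{no} coloring satisfying the invariant: injectivity forbids three consecutive edges of equal color, and the exactly-two-colors condition forbids a color appearing on a single edge flanked by two other colors, so along a cycle the colors must form blocks of length exactly two, forcing the length to be even. Hence your induction cannot pass through a subgraph $G'$ that is an odd cycle. The paper avoids this trap: if $G\setminus\{v_1,\ldots,v_k\}$ has maximum degree $2$, then (using $\delta(G)\geq 2$) $G$ itself is the union of two cycles sharing exactly the edge $xy$, and Lemma~\ref{Ci-Cj} is applied to $G$ as a whole --- it is a global statement about that specific graph, not a ``local recoloring'' tool. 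Note also that your trigger condition ``when $x$ and $y$ are adjacent in $G'$'' is vacuous, since the cycle produced by Lemma~\ref{MR} always contains the edge $xy$.
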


\begin{proof}

Suppose to the contrary that Theorem \ref{AS4-2} is not true and let $G$ be a counterexample minimizing $|V (G)| + |E(G)|$. Graph $G$ is a connected graph.

\begin{claim}
$\delta(G)\geq 2$.
\end{claim}
\begin{proof}
Assume that $G$ has an edge, say $v_1v_2$, where $d_G(v_1)=1$. 
\begin{enumerate}
\item If $d_G(v_2)=3$, then consider the graph $H=G\setminus\{v_1v_2\}$. 
\begin{enumerate}
\item If $\Delta(H)=2$, then $H$ is a cycle $C=v_2v_3v_4\ldots v_iv_2$ or $H$ is a path. If $H$ is a path or triangle, then it is easy to see that $G$ has an injective edge-coloring using three colors, such that in every simple path of length three  exactly two colors appear, which is a contradiction. So, we may assume that $H$ is a cycle of order at least 4.   In this case, we define an injective 5-edge-coloring $\phi$ of $G$ as follows. First, we set $\phi(v_2v_3)=\phi(v_3v_4)=\alpha$, $\phi(v_1v_2)=\beta$, where $\beta\neq \alpha$. Next, we assign to the ordered edges $v_4v_5,v_5v_6,\ldots,v_{i-1}v_i,v_iv_2$ the ordered colors $\gamma\gamma\lambda\lambda\gamma\gamma\lambda\lambda\ldots$ and we get a contradiction. 
\item If $\Delta(H)=3$, then by minimality of $G$, $H$ has an injective 5-edge-coloring $\phi$ such that in every simple path of length three, exactly two colors appear. Let $N_G(v_2)=\{v_1,w_1,w_2\}$. In this case, we extend $\phi$ to $G$ by setting $\phi(v_1v_2)=\alpha$, such that $\alpha$ is distinct from the colors of the edges adjacent to the edges $v_2w_1$ and $v_2w_2$, we get a contradiction.    
\end{enumerate}
\item If $d_G(v_2)\leq 2$, then consider the maximal simple path $P=v_1v_2\ldots v_k$ in $G$, $k\geq 3$. Since $P$ is a maximal simple path and $\Delta(G)=3$, we have $d_G(v_k)=3$. Let $N(v_k)=\{v_{k-1},w_1,w_2\}$. Consider the graph $H=G\setminus\{v_1,\ldots,v_{k-2}\}$. By minimality of $G$, $H$ has an injective 5-edge-coloring $\phi$ such that, in every simple path of length three, exactly two colors appear. In this case, we assign to the ordered edges $v_{k-1}v_{k-2}, v_{k-2}v_{k-3},\ldots,v_2v_1$ the ordered colors $\alpha\alpha\beta\beta\alpha\alpha\beta\beta\alpha\alpha \ldots$, where $\alpha \neq \beta$  and  $\alpha\notin\{\phi(v_kw_1),\phi(v_kw_2),\phi(v_kv_{k-1})\}$, we get a contradiction.  
\end{enumerate}
\end{proof}

In the rest of the proof, we prove that such a configuration given by Lemma \ref{MR} can be reduced.
\begin{claim}
Graph $G$ does not contain a cycle $xv_1\ldots v_kyx$, where $v_1,\ldots, v_k,y$ are 2-vertices, $k\geq1$, $d_G(x)=3$.
\end{claim}
\begin{proof}
Assume that $G$ contains a cycle $xv_1\ldots v_kyx$, where $v_1,\ldots, v_k,y$ are 2-vertices, $k\geq1$, $d_G(x)=3$.
 Let $N_G(x)=\{y,v_1,x_1\}$.
\begin{enumerate}
\item If $k=1$, then by minimality of $G$, the graph $H=G\setminus\{v_1y\}$  has an injective 5-edge-coloring $\phi$ such that, in every simple path of length three, exactly two colors appear.  We can extend the coloring $\phi$ as follows. First, we recolor the edges $xy$ with a color $\alpha$, such that  $\alpha$ is distinct from the colors of the edges adjacent to $xx_1$ and distinct from $\phi(xv_1)$. Next, we set $\phi(yv_1)=\beta$, where $\beta\notin\{\alpha,\phi(xv_1),\phi(xx_1)\}$. Hence, we can extend $\phi$ to $G$ and we get a contradiction.
\item If $k\geq2$, then by minimality of $G$, the graph $H=G\setminus\{v_1v_2,\ldots,v_{k-1}v_k,v_ky\}$  has an injective 5-edge-coloring $\phi$ such that, in every simple path of length three, exactly two colors appear. We can extend the coloring $\phi$ to $G$ as follows. First, we recolor the edges $xv_1$ and $xy$ with the colors $\alpha$ and $\beta$ respectively such that  $\alpha\notin \phi(x_1)$ and $\beta\notin\phi(x_1)\cup\{\alpha\}$. Next, we set $\phi(yv_k)=\beta$ and we assign to the ordered edges $v_1v_2,v_2v_3,v_3v_4,\ldots,v_{k-2}v_{k-1}, v_{k-1}v_{k}$ the ordered colors $\alpha\gamma\gamma\alpha\alpha\gamma\gamma\alpha\alpha\gamma\gamma\ldots$ if $k$ is even and $\gamma\gamma\alpha\alpha\gamma\gamma\alpha\alpha\gamma\gamma\ldots$ if $k$ is odd, such that $\gamma\notin\phi(x)$. Hence, we can extend $\phi$ to $G$ and  we get a contradiction.
\end{enumerate}
Hence, in each case, we can extend the coloring to $G$, we get a contradiction.

\end{proof}

\begin{claim}
Graph $G$ does not contain a cycle $xv_1\ldots v_kyx$, where $v_1,\ldots, v_k$ are 2-vertices, $k\geq1$, $d_G(x)=d_G(y)=3$.
\end{claim}
\begin{proof}
Assume that $G$ contains a cycle $C=xv_1\ldots v_kyx$, where $v_1,\ldots, v_k$ are 2-vertices, $k\geq1$, $d_G(x)=d_G(y)=3$. Let $N_G(x)=\{y,v_1,x_1\}$ and $N_G(y)=\{x,v_k,y_1\}$. Consider the graph $H=G\setminus\{v_1,\ldots,v_k\}$, $k\geq1$. We claim that $\Delta(H)=3$. Indeed, if $\Delta(H)=2$, then $G$ is a union of two cycles $C_i$ and $C_j$ such that they have exactly one edge in common. Therefore, by  Lemma \ref{Ci-Cj}, $G$ has an injective 4-edge-coloring such that, in every simple path of length three, exactly two colors appear, which contradicts the choice of $G$. Hence, $\Delta(H)=3$ and by minimality of $G$, $H$ has an injective edge-coloring $\phi$ satisfying the hypothesis. We will extend $\phi$ to an injective edge-coloring of $G$ with the desired property as follows.
\begin{enumerate}
\item[\textbf{Case 1:}] Assume that $x_1= y_1$ and  denote $x_2$ the third neighbor of $x_1$. Let  $\alpha$, $\beta$, $\gamma$ be the colors of the edges $x_1x$, $xy$,  and $yx_1$ respectively.
\begin{enumerate}
\item If $k=1$, then we color $xv_1$ with a color $\ a\notin\{\beta,\gamma,\phi (x_1x_2)\}$ and we color $yv_1$ with a color $b\notin\{\alpha,\beta,a,\phi(x_1x_2)\}$. Hence, we get a contradiction.

\item If $k\geq2$, then consider the following two cases:
\begin{itemize}
\item If $k$ is even, then we assign to the ordered edges $xv_1,v_1v_2,v_2v_3,\ldots,v_{k-1}v_k$ the ordered colors $\lambda\lambda\alpha\alpha\lambda\lambda\alpha\alpha\lambda\lambda\ldots$, where $\lambda\notin\phi(x_1)\cup\{\beta\}$. The new partial edge-coloring $\phi'$ is an injective 5-edge-coloring such that, in every simple path of length three, exactly two colors appear. Now, we set $\phi(v_ky)=\xi$ ($\xi\notin\{\phi(x_1x_2),\alpha,\lambda\}$). Hence we can extend the coloring $\phi$ to $G$ and we get a contradiction.
\item
\begin{sloppypar}
 If $k$ is odd then, we set first $\phi(xv_1)=\beta$ and $\phi(v_{k}y)=\xi$  ($\xi\notin\{\alpha,\beta,\gamma,\phi(x_1x_2)\}$). Next we assign to the ordered edges $v_1v_2,v_2v_3,\ldots,v_{k-1}v_{k}$ the ordered colors $\lambda\lambda\alpha\alpha\lambda\lambda\alpha\alpha\lambda\lambda\ldots$, where $\lambda\notin\{\alpha,\beta,\gamma,\xi\}$. 
\end{sloppypar}

 Hence we can extend the coloring $\phi$ to $G$ and we get a contradiction.
\end{itemize}

\end{enumerate}

\item[\textbf{Case 2:}] Assume that $x_1\neq y_1$. 
W.l.o.g suppose that $\phi(xx_1)=\phi(xy)=\alpha$ and $\phi(yy_1)=\beta$, $\alpha\neq\beta$.
\begin{enumerate}
\item If $k=1$, then we can extend the coloring by setting $\phi(xv_1)=\gamma$ and $\phi(yv_1)=\lambda$, such that $\gamma$ is distinct from the colors colors of the edges ajacent with $xx_1$ and distinct from $\alpha$ and $\beta$, and $\lambda$ distinct from colors of edges adjavent with $yy_1$ and distinct from $\alpha$ and $\gamma$. Hence, we get a contradiction.

\item If $k\geq2$, then we color $xv_1$ with the color $\alpha$ and we consider the following cases:
\begin{itemize}
\item If $k$ is even, then we assign to the ordered edges $yv_k,v_kv_{k-1},\ldots,v_3v_2,v_2v_1$ the ordered colors $\gamma\gamma\beta\beta\gamma\gamma\beta\beta\gamma\gamma\ldots$, where $\gamma\notin\phi(y_1)\cup\{\alpha\}$. Hence we can extend the coloring $\phi$ to $G$ and we get a contradiction.
\item If $k$ is odd, then we assign to the ordered edges $v_kv_{k-1},v_{k-1}v_{k-2}\ldots,v_3v_2,v_2v_1$ the ordered colors $\gamma\gamma\lambda\lambda\gamma\gamma\lambda\lambda\gamma\gamma\ldots$, where $\gamma\notin\{\alpha,\beta\}$ and $\lambda\notin\{\alpha,\gamma\}$. The new partial edge-coloring $\phi'$ is an injective 5-edge-coloring such that, in every simple path of length three, exactly two colors appear. Now, we color $v_ky$ with some color $\xi$ such that $\xi$ is distinct from the color of the edges at distance exactly two (we have 4 forbidden colors). Hence we can extend the coloring $\phi$ to $G$ and we get a contradiction.
\end{itemize} 

\end{enumerate} 

\end{enumerate}


This completes the proof of Theorem \ref{AS4-2}.

\end{proof}

Then Theorem \ref{AS4-1} is proved.

\end{proof}

\section{Open problem}
 We summarize the conjectures we mentioned in the introduction.\\
 
  While there exist cubic graphs with the injective chromatic index equal to 6 (Figure \ref{fig6}), no example of a subcubic graph that would require 7 colors is known, we proposed the following conjecture.
\begin{conjecture}
For every subcubic graph, $\chi_{\rm inj}'(G)\leq 6$. 
\end{conjecture}
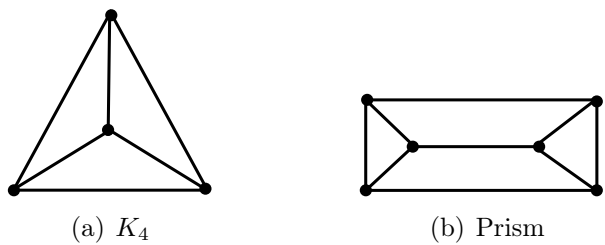
\begin{figure}[ht]
\centering
\subfigure[$K_4$]{
\begin{pspicture}(0,-1.26)(2.7,1.26)
\psdots[dotsize=0.12](1.32,-0.38)
\pstriangle[linewidth=0.04,dimen=outer](1.35,-1.18)(2.62,2.42)
\psline[linewidth=0.04cm](1.34,1.2)(1.32,-0.38)
\psline[linewidth=0.04cm](1.32,-0.38)(0.06,-1.16)
\psline[linewidth=0.04cm](1.3,-0.34)(2.62,-1.16)
\psdots[dotsize=0.16](1.36,1.16)
\psdots[dotsize=0.16](1.32,-0.36)
\psdots[dotsize=0.16](2.6,-1.14)
\psdots[dotsize=0.16](0.08,-1.16)
\end{pspicture} }
\quad\quad\quad\quad
\subfigure[Prism]{\begin{pspicture}(0,-0.7)(3.22,0.7)
\psframe[linewidth=0.04,dimen=outer](3.14,0.62)(0.06,-0.62)
\psline[linewidth=0.04cm](0.7,-0.02)(2.3,-0.02)
\psline[linewidth=0.04cm](3.1,0.6)(2.3,-0.02)
\psline[linewidth=0.04cm](2.3,-0.02)(3.08,-0.58)
\psline[linewidth=0.04cm](0.06,0.62)(0.7,0.0)
\psline[linewidth=0.04cm](0.7,-0.02)(0.08,-0.6)
\psdots[dotsize=0.16](3.12,0.58)
\psdots[dotsize=0.16](3.12,-0.6)
\psdots[dotsize=0.16](2.36,-0.02)
\psdots[dotsize=0.16](0.7,-0.02)
\psdots[dotsize=0.16](0.1,0.6)
\psdots[dotsize=0.16](0.08,-0.6)
\end{pspicture} 
}
\caption{Two graphs having  an injective chromatic index equal to 6}
\label{fig6}
\end{figure}
For the case of subcubic bipartite graph we made the following conjecture.
\begin{conjecture}
For every subcubic bipartite  graph, $\chi_{\rm inj}'(G)\leq 5$. 
 \end{conjecture}
 It is mentioned that there exist a cubic bipartite graph with injective chromatic index equal to $5$. \\
 
It would be interesting to understand the list version of injective edge-coloring. An edge list $L$ for a graph $G$ is a mapping that assigns a finite set of colors to each edge of $G$. Given an edge list $L$ for a graph $G$, we shall say that $G$ is injective edge-choosable if it has an injective edge-coloring $c$ such that $c(e)\in L(e)$ for every edge of $G$.
The list-injective chromatic index, $ch'_{inj}(G)$, of a graph $G$ is the minimum $k$ such that for every edge list $L$ for $G$ with $|L(e)| = k$ for every $e \in E(G)$, $G$ is L-injective-edge colorable. Let us ask the following question.
\begin{question}
Is it true that $ch'_{inj}(G)=\chi_{\rm inj}'(G)$ for every graph $G$?
\end{question}


\end{document}